\documentclass[11pt,letterpaper,reqno]{amsart}

\usepackage[T1]{fontenc}
\usepackage[utf8]{inputenc}
\usepackage{lmodern}
\usepackage{amsmath,amssymb,mathtools,mathrsfs}
\usepackage[protrusion=true,expansion=false]{microtype}
\usepackage[colorlinks=true,linkcolor=blue,citecolor=blue,urlcolor=blue,hypertexnames=false]{hyperref}
\hypersetup{
  pdftitle={Odd Parts of Derivative Period Polynomials: Zero Geometry and a Logarithmic Transition},
  pdfauthor={Seokho Jin},
  pdfsubject={Derivative period polynomials, odd parts, zero geometry, logarithmic transition}
}

\newcommand{\C}{\mathbb C}
\newcommand{\R}{\mathbb R}
\newcommand{\Z}{\mathbb Z}
\newcommand{\T}{\mathbb T}
\newcommand{\SLZ}{\mathrm{SL}_2(\Z)}
\newcommand{\CFI}{Conrey--Farmer--Imamoglu}
\newcommand{\D}{\mathbb D}
\newcommand{\Zset}{\mathcal Z}
\newcommand{\doi}[1]{\href{https://doi.org/#1}{doi:#1}}
\newcommand{\arxiv}[1]{\href{https://arxiv.org/abs/#1}{arXiv:#1}}
\newcommand{\re}{\operatorname{Re}}
\newcommand{\im}{\operatorname{Im}}

\theoremstyle{plain}
\newtheorem{theorem}{Theorem}[section]
\newtheorem{proposition}[theorem]{Proposition}
\newtheorem{lemma}[theorem]{Lemma}
\newtheorem{corollary}[theorem]{Corollary}
\newtheorem{conjecture}[theorem]{Conjecture}
\theoremstyle{remark}
\newtheorem{remark}[theorem]{Remark}
\numberwithin{equation}{section}

\title[Odd derivative period polynomials and transition]
{Odd Parts of Derivative Period Polynomials: Zero Geometry and a Logarithmic Transition}
\author{Seokho Jin}
\address{Department of Mathematics, Chung-Ang University, 84 Heukseok-ro, Dongjak-gu, Seoul 06974, Republic of Korea}
  \email{archimed@cau.ac.kr}
\date{}
\subjclass[2020]{Primary 11F67, 30C15; Secondary 11F11, 11M26, 26C10}
\keywords{derivative period polynomial, odd period polynomial,
self-inversive polynomial, unit-circle zeros, critical $L$-derivatives}

\begin{document}
\raggedbottom

\begin{abstract}
Let $f$ be a normalized level-one Hecke eigenform of even weight $k$, and let
$Q_{f,m}$ be the derivative period polynomial formed from the critical values
of the $m$-th derivative of its completed $L$-function.  We study its odd part
$Q^-_{f,m}(z)=(Q_{f,m}(z)-Q_{f,m}(-z))/2$.

We prove that there is an absolute $K_0$ such that, for every even
$k\ge K_0$, every normalized level-one Hecke eigenform $f$ of weight $k$, and
every integer $m\ge0$, the nonzero zeros of $Q^-_{f,m}$ off the unit circle,
if any, consist of four simple zeros forming a single real reciprocal quartet
$\{\pm b,\pm b^{-1}\}$, where $0<b<1$.  The occurrence and location of this possible
quartet are governed by the critical derivative order
$m_c(k)=(k-1)\log((k-1)/\pi)$.  If $m/m_c(k)\to\theta\in(0,1)$, exactly one
quartet occurs and $b\to(1+\theta)/2$; if $\theta>1$, every nonzero zero is
eventually simple and lies on the unit circle.  At $\theta=1$ the same
real-or-unit-circle containment remains valid, and any quartet that is present
consists of four simple zeros.

For each fixed weight, all nonzero zeros are eventually simple and lie on the
unit circle as $m\to\infty$.  Consequently, the Diamantis--Rolen containment
conjecture holds outside finitely many weight--derivative pairs.  The proof combines an
exact signed-reciprocal completion, uniform split-Mellin saddle estimates yielding
a moving-sine model, and a winding count that transfers disk-zero information
to the unit circle.
\end{abstract}

\maketitle

\section{Introduction}

Period polynomials encode the critical values of modular $L$-functions in
polynomials whose reciprocal symmetry reflects the functional equation.  For
the classical odd period polynomial, every nonzero zero lies on the unit circle
except for one real reciprocal quartet.  Taking an odd part need not preserve
unit-circle zero geometry, so the present odd-part problem is not a formal
consequence of the corresponding full-polynomial result.  For derivative period
polynomials, the derivative order is a genuine geometric parameter: it controls
both the location and the disappearance of the exceptional quartet.  We prove
that, in large weight, the only possible off-circle zeros form a single real
reciprocal quartet and that the transition between this quartet phase and the
unit-circle phase occurs at an explicit critical derivative order of size
$k\log k$.

Let $S_k(\SLZ)$ denote the space of holomorphic cusp forms of weight $k$ for
the full modular group $\SLZ$.  Let $k$ be an even positive integer for which
this space is nonzero, and let
\[
 f(z)=\sum_{n\ge1}a_f(n)e^{2\pi i n z},\qquad a_f(1)=1,
\]
be a normalized Hecke eigenform in this space.  Write
\[
 L_f(s)=\sum_{n\ge1}\frac{a_f(n)}{n^s},\qquad
 \Lambda_f(s)=(2\pi)^{-s}\Gamma(s)L_f(s).
\]
Here and below,
$\Lambda_f^{(m)}(s)=\frac{d^m}{ds^m}\Lambda_f(s)$.  For an integer $m\ge0$,
the derivative period polynomial is
\begin{equation}
 Q_{f,m}(z)=\sum_{j=0}^{k-2}\binom{k-2}{j}i^{1-j}
 \Lambda_f^{(m)}(j+1)z^{k-2-j}.
 \label{eq:Q-definition-intro}
\end{equation}
We study its odd part
\begin{equation}
 Q^-_{f,m}(z)=\frac{Q_{f,m}(z)-Q_{f,m}(-z)}2.
 \label{eq:Qodd-definition-intro}
\end{equation}
The factor $z$ forced by oddness is retained.  When that factor is removed, we
refer explicitly to the \emph{reduced odd part} $Q^-_{f,m}(z)/z$; the two
polynomials have the same nonzero zeros.  Throughout,
\[
 \T=\{z\in\C:|z|=1\},\qquad
 \D=\{z\in\C:|z|<1\},
\]
and, for a nonzero polynomial $P$, $\Zset(P)$ denotes its set of
distinct zeros.  We use $\Zset(P)$ only when $P\not\equiv0$.  A numerical
count of zeros includes multiplicity unless the text says otherwise.
Statements about normalized eigenforms are understood to be vacuous when
$S_k(\SLZ)=\{0\}$.

For $m=0$, the odd period polynomial has a remarkably rigid zero geometry:
it has simple zeros at $0$, $\pm\tfrac12$, and $\pm2$, double zeros at $\pm1$,
and every other zero lies on $\T$ \cite{ConreyFarmerImamoglu}.  Diamantis and
Rolen conjectured that this real-or-unit-circle containment persists for every
derivative order \cite{DiamantisRolen,DiamantisRolenSurvey}.  In the convention
of \eqref{eq:Qodd-definition-intro}, which retains the zero at the origin, their
conjecture takes the following form.

\begin{conjecture}[Diamantis--Rolen odd-part containment]
\label{conj:at-most-four}
For every even weight $k$, every normalized Hecke eigenform $f\in S_k(\SLZ)$,
and every integer $m\ge0$, the polynomial $Q^-_{f,m}$ is nonzero, and there is
a real number $b=b(f,m,k)>0$ such that
\begin{equation}
 \Zset(Q^-_{f,m})
 \subseteq \T\cup\{0,\pm b,\pm b^{-1}\}.
 \label{eq:DR-odd-containment}
\end{equation}
The exceptional real reciprocal orbit may be absent.
\end{conjecture}

\subsection*{Main results}
We first describe the large-weight zero geometry uniformly in the derivative
order and the eigenform.

\begin{samepage}
\begin{theorem}[Odd-part containment in large weight]
\label{thm:intro-main}
There exists an absolute constant $K_0$ such that, for every even weight
$k\ge K_0$, every integer $m\ge0$, and every normalized Hecke eigenform
$f\in S_k(\SLZ)$, the polynomial $Q^-_{f,m}$ is nonzero, and its nonzero zeros
off $\T$, if any, consist of four simple zeros forming a single real reciprocal
quartet
\[
 \{\pm b_{f,m,k},\ \pm b_{f,m,k}^{-1}\},
 \qquad 0<b_{f,m,k}<1.
\]
\end{theorem}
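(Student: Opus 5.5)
We will prove Theorem~\ref{thm:intro-main} in four steps: a reciprocal symmetry, a large-weight asymptotic model on the unit circle, a zero count on $\T$, and a separate count of the real zeros.

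\emph{Step 1: symmetry.} The functional equation $\Lambda_f(k-s)=(-1)^{k/2}\Lambda_f(s)$, differentiated $m$ times, gives $\Lambda_f^{(m)}(k-s)=(-1)^{k/2+m}\Lambda_f^{(m)}(s)$. Hence the coefficients $c_j$ of $z^{k-2-j}$ in $Q_{f,m}$ satisfy $c_{k-2-j}=\pm\,\overline{c_j}$, up to a fixed unimodular factor. Only odd $j$ survive in $Q^-_{f,m}$, and for those $i^{1-j}$ is real. So $R(z):=Q^-_{f,m}(z)/z$ is a real polynomial of even degree $2n=k-4$ in $z^2$, and it satisfies the signed self-reciprocity $z^{2n}R(1/z)=\varepsilon R(z)$ with $\varepsilon=\pm1$.

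This means $z^{-n}R(z)$ restricted to $\T$ equals $\varepsilon'\cdot 2\,\mathrm{Re}$ or $2\,\mathrm{Im}$ of $e^{i\cdot}$-type sums. Concretely, writing $z=e^{i\phi}$ we get $z^{-n}R(z)=2F(\phi)$, where $F$ is a real trigonometric polynomial. This is the ``signed-reciprocal completion.'' Off-circle zeros then come in quartets $\{w,-w,1/\bar w,-1/\bar w\}$, as well as conjugates.

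\emph{Step 2: asymptotic model.} We write $\Lambda_f^{(m)}(j+1)$ through the split Mellin integral $\int_1^\infty f(iy)(\log y)^m(\cdots)y^{s-1}dy$, plus its reflected counterpart. The dominant coefficients are $L_f\approx1$ times $\Gamma^{(m)}$-type terms. Uniformly in $m$, a saddle-point analysis of $\int_1^\infty e^{-2\pi y}y^{s-1}(\log y)^m\,dy$ then reduces $R$ on $\T$ to a ``moving-sine'' main term
\[
A(\phi)\sin\bigl(n\phi+\psi_m(\phi)\bigr),
\]
with an error $E$ satisfying $|E|<|A|$ away from $\phi\in\{0,\pi\}$ and possibly from an interval near $\pm i$. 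Here $\psi_m$ is a phase whose derivative is controlled by $m/m_c(k)$; the scale $m_c=(k-1)\log((k-1)/\pi)$ is where the saddle of $(\log y)^m y^{s}e^{-2\pi y}$ crosses $y=1$. The cusp-form contributions from $n\ge2$ are exponentially smaller; we bound them by Deligne's bound $|a_f(n)|\le d(n)n^{(k-1)/2}$, uniformly in $f$.

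\emph{Step 3: counting on $\T$.} We count sign changes of $F$ on $(0,\pi)$ using the moving-sine model. This yields at least $2n-4$ zeros on $\T$ (respectively $2n$ when the phase winds fully), all of them simple by the intermediate value theorem plus the degree count. We then apply a winding/argument-principle count of $R$ on $|z|=1\pm\delta$. This shows that the remaining at most four zeros are off the circle or at $\pm1$; simplicity at $\pm1$ must be checked from the parity of $\varepsilon$.

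These four leftover zeros are closed under $w\mapsto -w$, $w\mapsto 1/\bar w$ and conjugation. A non-real off-circle zero would generate an orbit of eight, which is impossible. Hence the leftover zeros are real, $\{\pm b,\pm b^{-1}\}$, and they are simple because they are distinct by reciprocity (note $b\ne1$). Nonvanishing of $Q^-_{f,m}$ follows from the main term being nonzero.

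\emph{Step 4: the real axis, and the main obstacle.} On the real axis we must still rule out two reciprocal quartets appearing while circle zeros are lost. We handle this by checking, through the same saddle asymptotics, that $R(x)$ on $(0,1)$ has at most one sign change. The main obstacle is Step 2: the saddle estimates must hold uniformly across all $m$, including the transition regime $m\approx m_c$ where the saddle sits near $y=1$ and the endpoint interacts with it. We would first treat the endpoint by a uniform error-function-type approximation. Then we would verify that the error term stays below the amplitude $A(\phi)$ near $\phi=\pi/2$, which is exactly where the quartet is born.
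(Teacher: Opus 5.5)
Your outline has a genuine gap at the step that concludes the leftover zeros are real. A nonreal off-circle zero need not generate eight zeros. For $\zeta=it$ with $t>0$ and $t\ne1$, conjugation, $z\mapsto-z$ and $z\mapsto1/z$ produce only $\{\pm it,\pm i/t\}$. So a four-zero budget is equally compatible with a purely imaginary quartet. To exclude it you must localize every off-circle zero near $\pm1$, and that needs structure your plan never isolates.

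The paper writes $Q^-_{f,m}$ as a nonzero real multiple of $p(z)+(-1)^mz^{2M}p(1/z)$, where $p$ is the rotated odd part of the one-sided right-edge polynomial, and shows $p\approx\sin(2\pi\alpha_{k,m}z)$ on compacts. It then does two things:
(i) inside the disk, away from $\T$, it compares the completion with $p$; there the reflected term is exponentially small and the model zeros are real;
(ii) it uses the annular exclusion lemma, a radial bound on $p'/p$, to force every zero near $\T$ but away from $\pm1$ onto $\T$.

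Your Step 2 explicitly leaves an interval near $\pm i$ uncontrolled, which is exactly where an imaginary quartet would sit, and Step 4 only inspects the real axis. The same missing reduction leaves Step 3 without content. The number of sign changes of your moving sine is the total winding of $\psi_m$, which equals $\pi(N-2r)$ with $r$ the number of zeros of $p$ in $\D$. Without $p$ you cannot compute it, nor run an argument-principle count on $|z|=1\pm\delta$. Once $p$ is in hand, the sign count is essentially the paper's winding lemma.

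Second, your analytic picture puts the difficulty in the wrong place.
\begin{itemize}
\item The critical order is not where a saddle reaches $y=1$. The right-edge saddle satisfies $2\pi y_0=k-1+m/\log y_0$, so $y_0\ge(k-1)/(2\pi)$ always, and $y_0=(k-1)/\pi$ at $m=m_c(k)$. The reflected piece of the split at $y=1$ is super-polynomially small throughout, so no error-function uniformization is needed.
\item The transition is $\alpha_{k,m}=(k-1)/(2\pi y_0)$ crossing $1/2$, when the model zeros $\pm1/(2\alpha_{k,m})$ of $\sin(2\pi\alpha_{k,m}z)$ reach $\T$. A real quartet is therefore born at $\phi\in\{0,\pi\}$, precisely the points you exclude. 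It is not born near $\phi=\pi/2$, where the amplitude $\sinh(2\pi\alpha_{k,m})$ is of order one near the transition.
\end{itemize}

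Because the amplitude on $\T$ is $\asymp\min\{\alpha,|2\alpha-1|,1-\alpha\}$ against an $O(1/k)$ error, a sign-change count breaks down in three regimes your plan never treats:
\begin{itemize}
\item The collision $\alpha\to1/2$. The paper handles it with the boundary-sensitive winding count, annular exclusion, and the orbit argument made valid by localization near $\pm1$.
\item $\alpha\le\mathfrak a/k$, where the amplitude is $O(1/k)$ on the whole circle. The paper uses the relative bound $|c_r/c_0|\le(A\alpha)^r/r!$ from weighted saddle tails.
\item $1\le m\le B\log k$, where the amplitude near $\pm1$ is $\asymp m/(k\log k)$, possibly far below $1/k$. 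The paper uses the first-order Gamma expansion $p\approx\sin(2\pi z)-2\pi\mathfrak d_{k,m}z\cos(2\pi z)$ with error $o(\mathfrak d_{k,m})$.
\end{itemize}
The case $m=0$ additionally needs the Conrey--Farmer--Imamoglu theorem.
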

\end{samepage}

Theorem~\ref{thm:intro-main} is uniform in $m$: the derivative order may grow
arbitrarily with the weight, including through the transition at which the
quartet collides with the unit circle.
The theorem does not assert the presence of the quartet in that collision
regime, but whenever the quartet occurs, all four of its zeros are simple.

The possible quartet is governed more precisely by the following explicit
critical derivative order:
\begin{equation}
 m_c(k):=(k-1)\log\frac{k-1}{\pi}\asymp k\log k.
 \label{eq:intro-critical-scale}
\end{equation}

\begin{theorem}[Logarithmic transition in the derivative order]
\label{thm:intro-logarithmic-transition}
Let $k\to\infty$ through even weights, let $m=m(k)\in\Z_{\ge0}$, and suppose
that
\[
 \frac{m(k)}{m_c(k)}\longrightarrow\theta
\]
for some $0<\theta<\infty$.  The following conclusions hold uniformly over
normalized level-one Hecke eigenforms.

If $0<\theta<1$, then, for all sufficiently large $k$, the polynomial
$Q^-_{f,m}$ has a simple zero at $0$, and its nonzero off-circle zeros form exactly one
real reciprocal quartet whose four zeros are simple.  All remaining nonzero
zeros are simple and lie on $\T$.  If
$b_{f,m,k}\in(0,1)$ denotes the positive member of the quartet inside $\D$,
then
\[
 b_{f,m,k}\longrightarrow\frac{1+\theta}{2},\qquad
 b_{f,m,k}^{-1}\longrightarrow\frac{2}{1+\theta}.
\]

If $\theta>1$, then, for all sufficiently large $k$, the origin is simple and
every nonzero zero of $Q^-_{f,m}$ is simple and lies on $\T$.

If $\theta=1$, then $Q^-_{f,m}$ is nonzero and the containment
\eqref{eq:DR-odd-containment} holds for all sufficiently large $k$; if the
exceptional quartet is present, its four zeros are simple.  This critical
statement does not decide whether the quartet is present inside the collision
window.
\end{theorem}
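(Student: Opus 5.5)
The plan is to reduce everything to a model function built from the dominant saddle contribution, and then count zeros in two separate regions: on $\T$ and inside $\D$. The count inside $\D$ is transferred to the unit circle by reciprocity.

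\emph{Signed-reciprocal completion.} Only odd $j$ contribute to $Q^-_{f,m}$, and for odd $j$ the factor $i^{1-j}=(-1)^{(j-1)/2}$ is real. Differentiating $\Lambda_f(k-s)=(-1)^{k/2}\Lambda_f(s)$ $m$ times gives $\Lambda_f^{(m)}(k-s)=(-1)^{k/2+m}\Lambda_f^{(m)}(s)$. Hence the reduced odd part $P(w)$, written in $w=z^2$, is a real polynomial that is self-reciprocal up to an explicit sign $\epsilon=\pm1$. Nonzero zeros off $\T$ therefore come in orbits $\{w,\bar w,w^{-1},\bar w^{-1}\}$. A real quartet $\{\pm b,\pm b^{-1}\}$ in $z$ corresponds to a single real pair $\{b^2,b^{-2}\}$ in $w$. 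So it suffices to prove two things: that $P$ has exactly one zero in $\D$, real and simple, when $\theta<1$; and that $P$ has none when $\theta>1$.

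\emph{Split-Mellin representation.} Write $\Lambda_f^{(m)}(s)=\int_0^\infty f(iy)(\log y)^m y^{s-1}\,dy$, split the integral at $y=1$, and fold it with the functional equation. Summing the binomial series then gives
\[
 Q^-_{f,m}(z)\;=\;\tfrac12\int_1^\infty f(iy)(\log y)^m\bigl[G_y(z)+\epsilon\,z^{k-2}G_y(1/z)\bigr]\,dy ,
\]
where $G_y(z)$ is, up to normalization, the odd-in-$z$ combination of $(y\mp iz)^{k-2}$. Ramanujan-type bounds on $a_f(n)$ show that the terms with $n\ge2$ are exponentially smaller than the $n=1$ term, uniformly in $f$. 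The integral is therefore governed by $e^{-2\pi y}(\log y)^m y^{k-2}$. Its saddle $y_*$ satisfies $2\pi y_*\approx k-1+m/\log y_*$. At $m=\theta m_c(k)$, the ratio of the two competing contributions, the direct term $G_y(z)$ and the reciprocal term $z^{k-2}G_y(1/z)$, behaves like $\exp\bigl((k-2)\log|z|+\ldots\bigr)$ against a factor $((k-1)/\pi)^{\ldots}$ coming from $(\log y)^m$. I will carry out a uniform Laplace expansion in the region where $z$ ranges over compact subsets and $m/m_c$ over compact subsets of $(0,\infty)$. This should produce an approximation $P(w)=A\,[\,M(w)+E(w)\,]$ with an explicit model $M$ and a relative error $E=o(1)$.

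\emph{Moving-sine model and the winding count.} On $\T$, write $w=e^{i\phi}$ and multiply by $w^{-d/2}$, where $d=\deg P$. The model then becomes a real function proportional to $\sin(\Phi(\phi))+o(1)$, with a phase $\Phi$ that is strictly monotone and increases by about $\pi d$ minus a defect. Sign changes then give at least $d-2\nu$ simple zeros on $\T$, where $\nu\in\{0,1\}$ is the winding defect.

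Inside $\D$, I apply the argument principle on a circle $|w|=r<1$ chosen away from the exceptional point. There $M$ is dominated by a two-term expression $c_1+c_2 w$ (after removing a power), and Rouché's theorem gives exactly $\nu$ zeros. The zero exists precisely when the exponential rates of the direct and reciprocal terms cross inside $\D$. I expect the matching condition to reduce to $|z|=(1+\theta)/2$, so that $\nu=1$ exactly when $\theta<1$. Because the zero is real by symmetry and unique, it is simple. Zero counts on $\T$ and in $\D$ then exhaust the degree.

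For $\theta=1$, the crossing point tends to $\T$ and the Rouché annulus degenerates. I would keep only the weaker two-sided statement: at most one zero in $\D$ (simple, hence real), with the circle count still matching by reciprocity. This gives the containment without deciding whether the quartet is present.

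The main obstacle is obtaining the saddle estimates uniformly as $y_*$ moves with $m$, together with the sine-phase monotonicity near $w=\pm1$. Both control the error near the collision point. I would handle this first by splitting $\T$ into a neighborhood of $\pm1$ of size $k^{-1/2}$, where a second-order Taylor expansion is used, and its complement, where the crude Laplace bound suffices.
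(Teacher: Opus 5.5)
\textbf{Gap: the proposal misidentifies where the quartet comes from.} Your overall architecture is the paper's: fold the Mellin integral at $y=1$ to write $Q^-_{f,m}$ as a direct term plus its signed reflection; count zeros on $\T$ by a sign-change/winding argument whose defect is the number of zeros of the direct term in $\D$; locate the rest by Rouch\'e; close with a degree count. The gap is in the step that is supposed to create the quartet. You attribute the zero in $\D$ to a crossing of the exponential rates of the direct and reflected terms, and you propose a Rouch\'e comparison with a two-term expression $c_1+c_2w$. Neither works.
\begin{itemize}
\item Both terms carry the same weight $f(iy)(\log y)^m$ on $[1,\infty)$, hence the same saddle. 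So on $|z|=\rho<1$ the reflected term is smaller than the scale of the direct term by a factor $O_\rho(\rho^{k-2})$, for every $\theta$. The two rates balance only on $\T$; this is the bound $|z|^N|p(1/z)|\ll\varrho^N$ used in the proof of Theorem~\ref{thm:separated-moving-phase}.
\item The normalized coefficients of the direct term are approximately $(2\pi\alpha)^r/r!$ with $\alpha$ of order one, so no two of them dominate on a circle of radius comparable to $1$.
\end{itemize}
Carried out as written, your mechanism would put the balance point on $\T$ for every $\theta$. The value $(1+\theta)/2$ enters your proposal only as an expectation.

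\textbf{The missing idea.} The exceptional zero is a zero of the direct term itself. The saddle you wrote, $2\pi y_*=k-1+m/\log y_*$, makes the normalized direct term converge on compact sets to the moving sine $\sin(2\pi\alpha_{k,m}z)$, with $\alpha_{k,m}=(k-1)/(2\pi y_*)$ (Proposition~\ref{prop:transition-moving-edge-quantitative}). For $m\sim\theta m_c(k)$ one has $\log y_*\sim\log((k-1)/\pi)$, hence $\alpha_{k,m}\to1/(1+\theta)$. The sine then has zeros $\pm1/(2\alpha_{k,m})\to\pm(1+\theta)/2$ in $\D$ exactly when $\theta<1$, and only the origin when $\theta>1$.
\begin{itemize}
\item For fixed $\theta\ne1$ the sine is bounded below on $\T$. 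So Rouch\'e on $\T$ itself gives the exact defect; your circle $|w|=r<1$ leaves the annulus $r\le|w|<1$ uncounted.
\item For the same reason, no analysis at scale $k^{-1/2}$ near $\pm1$ is needed in that case.
\item The inner zeros of the completion are then found in small disks around $\pm1/(2\alpha_{k,m})$, where the reflected term is exponentially small.
\end{itemize}

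\textbf{Further omissions at $\theta=1$.}
\begin{itemize}
\item Your budget of at most one zero of $P(w)$ in $\D$ needs the boundary-sensitive count of Lemma~\ref{lem:disk-zero-circle-count-boundary}, because the direct term may vanish on $\T$ near $\pm1$.
\item In the variable $w=z^2$, uniqueness makes that zero real but not positive. A negative real $w$-zero is a purely imaginary quartet $\{\pm ib,\pm ib^{-1}\}$, which violates the containment. The paper excludes this by localizing every off-circle zero near $z=\pm1$, using interior exclusion plus Lemma~\ref{lem:critical-annular-exclusion}.
\end{itemize}
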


Thus, on the macroscopic scale, the quartet is a pre-critical phenomenon.  As
$\theta$ increases through $(0,1)$, its positive zero inside $\D$ moves from
the limiting position $1/2$ toward $1$; once $\theta>1$, no off-circle zero
remains.  The critical ratio $\theta=1$ is the point at which the moving zeros of
the sine model reach the unit circle.

\subsection*{Refinements of the transition}
The ratio limit does not describe the width of the collision region.  For sequences
$m(k)\ge1$, Corollary~\ref{cor:logarithmic-window} shows that, on each one-sided
regime with $|m(k)-m_c(k)|/\log k\to\infty$, the sign of $m(k)-m_c(k)$
determines the phase; the case $m=0$ is covered by the classical theorem above.
More precisely, if
\[
 \frac{m_c(k)-m(k)}{\log k}\longrightarrow+\infty,
\]
then, for all sufficiently large $k$, exactly one real reciprocal quartet is
present off $\T$, and its four zeros are simple; if
\[
 \frac{m(k)-m_c(k)}{\log k}\longrightarrow+\infty,
\]
then every nonzero zero is simple and lies on $\T$.  In the near-critical
pre-critical subrange $m_c(k)-m(k)=o(k\log k)$, the inner positive zero satisfies
\begin{equation}
 1-b_{f,m,k}
 \sim
 \frac{m_c(k)-m(k)}
 {(k-1)\bigl(1+2\log((k-1)/\pi)\bigr)}.
 \label{eq:intro-precritical-location}
\end{equation}
The internal scaling law for $m-m_c(k)=O(\log k)$ remains open; throughout that
window Theorem~\ref{thm:intro-main} still gives real-or-unit-circle
containment, and all four zeros of any exceptional quartet are simple.

At the opposite end of the pre-critical phase, where $m$ is fixed or at most
logarithmic, the quartet begins near the classical points
$\{\pm\tfrac12,\pm2\}$.  Theorem~\ref{thm:alpha-one-endpoint} gives the
following first-order refinement.  Let $\psi=\Gamma'/\Gamma$ and define the
endpoint displacement
\begin{equation}
 \mathfrak d_{k,m}:=\frac{m\,\psi'(k-1)}{\psi(k-1)-\log(2\pi)}.
 \label{eq:intro-upper-displacement}
\end{equation}
For each fixed $B>0$, uniformly over integers $1\le m\le B\log k$ and
normalized level-one Hecke eigenforms $f$,
\begin{equation}
 b_{f,m,k}=\frac12+\frac12\mathfrak d_{k,m}
            +o_B(\mathfrak d_{k,m}),\qquad
 b_{f,m,k}^{-1}=2-2\mathfrak d_{k,m}
            +o_B(\mathfrak d_{k,m}).
 \label{eq:intro-upper-root}
\end{equation}
Both $o_B$-terms are uniform in $m$ and $f$ in this range.
Since $\mathfrak d_{k,m}\asymp_B m/(k\log k)$, this gives the corresponding
first-order formula for each fixed $m\ge1$.  For $m=0$, the corresponding
classical off-circle zeros are exactly $\pm\tfrac12$ and $\pm2$ in the
normalization used here; the unit-circle zeros at $\pm1$ are double.

\subsection*{Fixed weight and finite exceptions}
A different asymptotic regime is needed to control the remaining bounded
weights.  For fixed $k$, all nonzero zeros eventually lie on $\T$ as
$m\to\infty$.

\begin{theorem}[Fixed weight and large derivative order]
\label{thm:intro-fixed-weight-large-derivative}
Fix an even weight $k$ and a normalized Hecke eigenform $f\in S_k(\SLZ)$.  Then
$\Lambda_f^{(m)}(2)\ne0$ for all sufficiently large $m$, and
\begin{equation}
 \frac{Q^-_{f,m}(z)}{(k-2)\Lambda_f^{(m)}(2)z}
 =z^{k-4}+(-1)^m+o_{\mathrm{coeff}}(1)
 \qquad(m\to\infty).
 \label{eq:intro-fixed-weight-root-unity-limit}
\end{equation}
Here $o_{\mathrm{coeff}}(1)$ means coefficientwise convergence to zero in this
fixed degree.  Together with the real coefficients and signed reciprocity of
the normalized reduced polynomial, this implies that, for all sufficiently
large $m$, the origin is a simple zero of $Q^-_{f,m}$ and every nonzero zero is
simple and lies on $\T$.
Moreover, for each fixed even weight $k$ there is an integer $m_0(k)$ for which
this conclusion holds for every normalized Hecke eigenform in $S_k(\SLZ)$ and
every $m\ge m_0(k)$.
\end{theorem}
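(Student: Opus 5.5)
The plan has three parts: show that the values $\Lambda_f^{(m)}(j+1)$ at $j+1=2$ and at $j+1=k-2$ dominate all other critical values as $m\to\infty$; read off the coefficient limit \eqref{eq:intro-fixed-weight-root-unity-limit}; then locate the zeros of the limiting polynomial.

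\emph{Step 1: which coefficients survive.} Only the odd powers $z^{k-2-j}$ survive in $Q^-_{f,m}$, and since $k$ is even these are exactly the terms with odd $j$. Their arguments are $s=j+1\in\{2,4,\dots,k-2\}$, i.e.\ the even critical points. Differentiating $\Lambda_f(s)=\int_0^\infty f(iy)y^{s-1}\,dy$ under the integral gives
\[
\Lambda_f^{(m)}(s)=\int_0^\infty f(iy)(\log y)^m y^{s-1}\,dy .
\]
We split at $y=1$ and use $f(i/y)=(-1)^{k/2}y^kf(iy)$. This writes $\Lambda_f^{(m)}(s)=I_m(s)+(-1)^{m+k/2}I_m(k-s)$, where
\[
I_m(s)=\int_1^\infty f(iy)(\log y)^m y^{s-1}\,dy .
\]
On $[1,\infty)$ we have $f(iy)=e^{-2\pi y}(1+O(e^{-2\pi y}))$, using $a_f(1)=1$ and a fixed $k$. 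A Laplace/saddle analysis of $(\log y)^m y^{s-1}e^{-2\pi y}$ puts the saddle at $y\approx m/(2\pi\log m)\to\infty$. There, increasing $s$ by $2$ multiplies the integrand by $y^2$, which is large. Hence $I_m(s')/I_m(s)\to\infty$ whenever $s'>s$, uniformly for $s$ in the finite set $[1,k-1]$, and the tail correction $O(e^{-4\pi y})$ contributes relatively $o(1)$.

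So among the even critical points the dominant quantities are $I_m(k-2)$, from $s=k-2$, and $I_m(k-2)$ again via the reflected term of $s=2$. Explicitly,
\[
\Lambda^{(m)}_f(2)\sim(-1)^{m+k/2}I_m(k-2),\qquad \Lambda^{(m)}_f(k-2)\sim I_m(k-2).
\]
All other $\Lambda_f^{(m)}(s)$ with $s$ even are $o(I_m(k-2))$. This already shows $\Lambda_f^{(m)}(2)\neq0$ for large $m$.

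\emph{Step 2: the coefficient limit.} The $j=1$ term is $\binom{k-2}{1}i^0\Lambda^{(m)}_f(2)z^{k-3}$. The $j=k-3$ term is $(k-2)i^{4-k}\Lambda^{(m)}_f(k-2)z$, with $i^{4-k}=(-1)^{k/2}$. Dividing by $(k-2)\Lambda_f^{(m)}(2)z$, the ratio of the constant coefficient to the leading one tends to $(-1)^{k/2}(-1)^{m+k/2}=(-1)^m$. Every middle coefficient tends to $0$ because the binomials are fixed. This proves \eqref{eq:intro-fixed-weight-root-unity-limit}.

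\emph{Step 3: zeros and uniformity.} Write $n=k-4$, which is even. The limit polynomial $z^{n}\pm1$ has $n$ simple zeros on $\T$, all distinct. The normalized reduced polynomial $P_m$ has real coefficients. It is also signed self-reciprocal, $z^nP_m(1/z)=\pm P_m(z)$, by the exact relation between $s$ and $k-s$ coming from the functional equation.

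For large $m$, Hurwitz's theorem places exactly one simple zero of $P_m$ in a small disk around each root of $z^n\pm1$. If such a zero $w$ were off $\T$, then $1/\bar w$ would be a second zero in the same disk. Indeed, real coefficients give $\bar w$ as a zero, reciprocity then gives $1/\bar w$, and $1/\bar w$ lies in the disk because the disk's center is on $\T$ and the disk is small. Two zeros in one disk contradict the count, so every nonzero zero is simple and lies on $\T$. The origin is simple because $P_m(0)\to(-1)^m\neq0$.

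Uniformity in $f$ is automatic: $S_k(\SLZ)$ has finitely many normalized eigenforms, so we take $m_0(k)$ to be the maximum over them.

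\emph{Main obstacle.} The delicate point is Step 1: proving rigorously that $I_m(s+2)/I_m(s)\to\infty$. I would use the elementary bound $I_m(s+2)\ge Y^2\int_Y^\infty(\dots)$ with $Y=\sqrt{m}$. I would then show that the mass of $I_m(s)$ on $[1,Y]$ is negligible, by comparing $(\log y)^m$ at $y\le Y$ with its values near the saddle.
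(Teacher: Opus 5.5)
Your proposal is correct and follows essentially the same route as the paper: you use the split-Mellin identity, then the dominance of the moment at Mellin parameter $k-2$, then the exact endpoint pairing that yields $z^{k-4}+(-1)^m$, then Rouch\'e/Hurwitz localization combined with the $z\mapsto1/\bar z$ symmetry, and finally finiteness of the set of normalized eigenforms. (For the dominance step the paper uses a fixed cutoff $R$ with $y^{S_--S_+}\le\varepsilon$ where you use the moving cutoff $Y=\sqrt m$; the two devices are equivalent.)

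One small repair is needed. A Euclidean disk centred at $\zeta\in\T$ is not invariant under $z\mapsto1/\bar z$, because $|1/\bar w-\zeta|=|w-\zeta|/|w|$. You can fix this in either of two ways. You can note that the Hurwitz zero eventually satisfies $|w-\zeta|<\rho/2$, which forces $1/\bar w\in D(\zeta,\rho)$ when $\rho\le1$. Or you can use the exactly invariant log-polar sectors $\{\zeta e^{u+i\vartheta}:|u|,|\vartheta|<\delta\}$, as the paper does. In either case, treat the two parity classes of $m$ as separate subsequences, and note that the $k-4$ regions exhaust the zeros of the monic reduced polynomial.
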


Combining the large-weight and fixed-weight theorems gives the following
finite-exception form of Conjecture~\ref{conj:at-most-four}.

\begin{corollary}[Exclusion of unbounded exceptional families]
\label{cor:finite-pair-exception}
There is a finite set
\[
 \mathcal P\subset 2\Z_{>0}\times\Z_{\ge0}
\]
such that, for every even weight $k$, every normalized Hecke eigenform
$f\in S_k(\SLZ)$, and every integer $m\ge0$, the polynomial $Q^-_{f,m}$ is
nonzero and the containment \eqref{eq:DR-odd-containment} holds whenever
$(k,m)\notin\mathcal P$.  Consequently, only finitely many triples $(k,f,m)$
can fail this conclusion.
\end{corollary}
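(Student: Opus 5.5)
The corollary follows by combining Theorem~\ref{thm:intro-main} with the uniform version of Theorem~\ref{thm:intro-fixed-weight-large-derivative}. The plan is to split the set of weight--derivative pairs into a large-weight part, a large-derivative part for each remaining weight, and a finite leftover.

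First fix the absolute constant $K_0$ from Theorem~\ref{thm:intro-main}. For every even $k\ge K_0$, every $m\ge0$ and every normalized eigenform $f$, that theorem shows that $Q^-_{f,m}\not\equiv0$. It also shows that the nonzero zeros off $\T$ are either absent or form one real quartet $\{\pm b,\pm b^{-1}\}$ with $0<b<1$. In either case \eqref{eq:DR-odd-containment} holds: take $b=b_{f,m,k}$ when the quartet is present, and any $b>0$ otherwise, since the conjecture allows the orbit to be absent. So no pair with $k\ge K_0$ needs to go into $\mathcal P$.

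Next consider the finitely many even weights $k<K_0$. If $S_k(\SLZ)=\{0\}$, the statement is vacuous for that weight. Otherwise, the last sentence of Theorem~\ref{thm:intro-fixed-weight-large-derivative} gives an integer $m_0(k)$, uniform over the finitely many normalized eigenforms of weight $k$, with the following property: for all $m\ge m_0(k)$ the origin is a simple zero and every nonzero zero of $Q^-_{f,m}$ lies on $\T$.

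This conclusion includes $Q^-_{f,m}\not\equiv0$. That is also visible directly from \eqref{eq:intro-fixed-weight-root-unity-limit}, because $\Lambda_f^{(m)}(2)\neq0$ and the normalized reduced polynomial tends coefficientwise to the nonzero polynomial $z^{k-4}+(-1)^m$. Hence containment holds for these pairs with the quartet absent. Define
\[
 \mathcal P=\{(k,m): k<K_0 \text{ even},\ S_k(\SLZ)\ne\{0\},\ 0\le m<m_0(k)\},
\]
which is finite.

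For the final sentence, note that each weight carries only finitely many normalized Hecke eigenforms, since $\dim S_k(\SLZ)<\infty$. So finitely many pairs give finitely many triples $(k,f,m)$.

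The only delicate point is the uniformity of $m_0(k)$ over eigenforms of a fixed weight, and this is automatic from finiteness of the eigenbasis: take the maximum of the thresholds over the eigenforms. If one did not take that last sentence of Theorem~\ref{thm:intro-fixed-weight-large-derivative} as given, it would follow the same way.
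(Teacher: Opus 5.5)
Your proposal is correct and follows the paper's proof essentially verbatim: you take $K_0$ from Theorem~\ref{thm:intro-main}, use the eigenform-uniform threshold $m_0(k)$ from Theorem~\ref{thm:intro-fixed-weight-large-derivative} for the finitely many even $k<K_0$ with $S_k(\SLZ)\ne\{0\}$, and let $\mathcal P$ consist of the pairs with $m<m_0(k)$. Your remarks on nonvanishing, and on choosing an arbitrary $b>0$ when the quartet is absent, only make explicit what the paper leaves implicit.
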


Thus Corollary~\ref{cor:finite-pair-exception} proves the Diamantis--Rolen
odd-part containment conjecture outside a finite set of weight--derivative
pairs.  In particular, no counterexample can occur along an unbounded sequence
of weight--derivative pairs.  The corollary does not assert that the finite
remaining region is empty.

\subsection*{Outline of the proof}
The proof proceeds in four steps.

\medskip
\noindent\emph{Algebraic reduction (Section~\ref{sec:normalization-completion}).}
After the leading right-edge coefficient $c_0$ is shown to be nonzero uniformly
in the large-weight range, the functional equation expresses $Q^-_{f,m}$ as a
nonzero real multiple of the exact odd signed-reciprocal completion
\begin{equation}
 P_{f,m,k}(z):=p_{f,m,k}(z)+(-1)^m z^{k-2}p_{f,m,k}(1/z),
 \label{eq:intro-completion}
\end{equation}
where $p_{f,m,k}$ is a real odd polynomial formed from the right-hand half of
the critical-derivative array.  The completion has real coefficients, is odd,
and satisfies the signed reciprocity relation
\[
 P_{f,m,k}(z)=(-1)^m z^{k-2}P_{f,m,k}(1/z).
\]
Thus its set of nonzero zeros is invariant under conjugation, sign change, and
inversion.
A real nonzero off-circle zero generates a reciprocal quartet.  At the critical
collision, the later localization near $\pm1$ rules out all orbit collapses, so
a nonreal off-circle zero would generate eight distinct zeros.
Signed reciprocity alone does not force zeros onto the unit circle.  The two
winding lemmas in Section~\ref{sec:normalization-completion} supply the missing
quantitative information: they turn a count of the zeros of $p_{f,m,k}$ in
$\D$ into a lower bound for the number of zeros of its completion on $\T$.
Thus the zero problem for $Q^-_{f,m}$ is reduced to a disk-zero count for the
one-sided polynomial.

\medskip
\noindent\emph{Uniform right-edge asymptotics
(Section~\ref{sec:right-edge-estimates}).}
A split Mellin formula reduces the right-edge coefficients to the model moments
\[
 \mathcal J_m(S)=\int_0^\infty e^{Sx-2\pi e^x}x^m\,dx.
\]
At $S=k-1$, let $x_0=x_0(k,m)>0$ denote the unique saddle, so that
\[
 2\pi e^{x_0}=k-1+\frac{m}{x_0},
\]
and define
\begin{equation}
 \alpha_{k,m}:=\frac{k-1}{2\pi e^{x_0}}
 =\frac{k-1}{k-1+m/x_0}\in(0,1].
 \label{eq:intro-alpha}
\end{equation}
Under the real interpolation in $m$, the parameter $\alpha_{k,m}$ decreases
continuously from $1$ at $m=0$ toward $0$ and crosses $1/2$ exactly when
$m=m_c(k)$.  The local saddle analysis controls the first $O(\log k)$
right-edge coefficients.  Since the degree grows with $k$, the global
factorial-tail bound is needed to pass from these local ratios to a uniform
approximation of the whole edge polynomial.  Together, the estimates proved
in Section~\ref{sec:right-edge-estimates} show
that, on each fixed compact set, $p_{f,m,k}$ is approximated uniformly over
$m\ge0$ and normalized eigenforms $f$ by the moving-sine model
\[
 z\longmapsto \sin(2\pi\alpha_{k,m}z).
\]
The value $\alpha=1/2$ is the collision point of the model: for
$0<\alpha<1/2$ it has only the origin in $\D$, whereas for
$1/2<\alpha<1$ it has two additional disk zeros; at $\alpha=1/2$ those zeros
reach $\pm1$.  The endpoints $\alpha_{k,m}\to0$ and
$\alpha_{k,m}\to1$ require separate refinements.

\medskip
\noindent\emph{Zero geometry and the three exceptional regimes
(Sections~\ref{sec:moving-phases}--\ref{sec:upper-endpoint}).}
The boundary size of the model is
\begin{equation}
 \min_{|z|=1}|\sin(2\pi\alpha z)|
 \asymp
 \min\{\alpha,\ |2\alpha-1|,\ 1-\alpha\},
 \qquad 0<\alpha<1,
 \label{eq:intro-sine-boundary-margin}
\end{equation}
as proved in Lemma~\ref{lem:sine-boundary-margin}.  Proposition~\ref{prop:transition-moving-edge-quantitative}
gives an absolute approximation error of order $k^{-1}$ on fixed compact
sets.  Hence the ordinary Rouch\'e argument works whenever
\begin{equation}
 k\min\{\alpha_{k,m},\ |2\alpha_{k,m}-1|,\ 1-\alpha_{k,m}\}
 \longrightarrow\infty.
 \label{eq:intro-zeroth-order-separated-range}
\end{equation}
The separated phase and the collision at $\alpha_{k,m}=1/2$ are treated in
Section~\ref{sec:moving-phases}.  At the collision, annular exclusion localizes
every nonzero off-circle zero near $\pm1$.  A nonreal zero there would generate eight
distinct zeros under conjugation, sign change, and inversion, whereas the
boundary-sensitive winding count leaves room for at most four.

The two endpoint sections address the other terms in the minimum in
\eqref{eq:intro-sine-boundary-margin}.  When $\alpha_{k,m}\to0$, the sine model
itself collapses in amplitude, and Section~\ref{sec:lower-endpoint} replaces the
absolute approximation by a relative estimate for the full edge array.  When
$\alpha_{k,m}\to1$, the displacement of the model zeros is of the same order
as, or smaller than, the zeroth-order error.  Section~\ref{sec:upper-endpoint}
therefore uses the first Gamma-factor correction; the uniform Bell-polynomial
estimates needed for that correction are proved in
Appendix~\ref{sec:bell-appendix}.

\medskip
\noindent\emph{The derivative-order scale and the fixed-weight limit
(Sections~\ref{sec:derivative-axis-consequences}
and~\ref{sec:fixed-weight-large-derivative}).}
Section~\ref{sec:derivative-axis-consequences} translates the saddle parameter
$\alpha_{k,m}$ into the critical order $m_c(k)$ and proves the transition theorem
and the uniform large-weight theorem.  Section~\ref{sec:fixed-weight-large-derivative}
uses a different mechanism: for fixed $k$, the split-Mellin moment at $k-2$
dominates those at smaller parameters, so the normalized reduced odd part in
\eqref{eq:intro-fixed-weight-root-unity-limit} tends coefficientwise to
$z^{k-4}+(-1)^m$.  This gives the fixed-weight theorem and, together with
the large-weight result, Corollary~\ref{cor:finite-pair-exception}.  The scope of
effectivity is recorded in Appendix~\ref{sec:effectivity}.

\subsection*{Related work}
The classical theory of period polynomials goes back to Manin,
Kohnen--Zagier, and Zagier \cite{Manin,KohnenZagier,ZagierPeriods}.  The
unit-circle problem for ordinary period polynomials was developed in
\cite{ConreyFarmerImamoglu,ElGuindyRaji,JinMaOnoSoundararajan}, and higher-level
odd and even parts, zero counts, and interlacing have subsequently been studied
in \cite{ChoiOdd,ChoiEven,Oh,KoMackenzieRossXue,KoMackenzieXue}.  For
derivative period polynomials, Diamantis and Rolen developed the cohomological
framework, formulated the full and parity conjectures, and proved an
Eisenstein-series case \cite{DiamantisRolen,DiamantisRolenSurvey}; see also
Im--Kim for a first-derivative result in higher level \cite{ImKim}.  Related
reciprocal zero phenomena occur for Ramanujan-type polynomials and Hilbert
modular forms \cite{MurtySmythWang,LalinSmyth,BabeiRolenWagner}.  The present
paper treats the zero geometry of the odd derivative family uniformly in the
joint weight--derivative aspect and identifies the transition law governing the
possible real quartet.

A separate preprint \cite{JinCircularSampling} treats the full derivative
period polynomial by circular sampling and multiplier theory.  That result
does not imply the present one, since taking odd parts need not preserve
unit-circle zeros: for example, the odd part of $(1+z)^6$ has nonzero zeros of
moduli $3^{-1/2}$ and $3^{1/2}$.  No result from that preprint is used here.

\subsection*{Scope}
The moving-edge estimate and the final large-weight theorem are uniform in all
integers $m\ge0$ and normalized level-one Hecke eigenforms; the separated,
critical, and endpoint estimates are uniform in their stated ranges.  The level-one
hypothesis enters
through the functional equation, the Fourier-coefficient bounds, and the
conductor-one Mellin and Dirichlet tails.  The completion and winding arguments
are more flexible, but no higher-level theorem is claimed in this paper.

Three questions remain beyond the present argument.  First, the proof confines
every possible exception to a finite low-weight, bounded-derivative region but does
not certify that the region is empty.  Second, it locates the transition to an
$O(\log k)$ window without deriving a complete scaled collision law inside that
window.  Third, the constants are not optimized to give practical numerical
values of $K_0$ or the fixed-weight thresholds $m_0(k)$.

We begin in Section~\ref{sec:normalization-completion} with an exact right-edge
completion and a winding principle that together reduce the zero problem to the
disk zeros of a one-sided polynomial.

\section{Right-edge normalization and signed-reciprocal completion}
\label{sec:normalization-completion}

This section isolates the algebraic reduction used throughout the zero
analysis.  Normalizing the right-edge coefficients turns the functional
equation into a signed reflection relation; rotation and odd extraction then
produce an exact odd signed-reciprocal completion.  Two winding lemmas convert
disk-zero counts for the edge polynomial into unit-circle counts for the
completion.  The final subsection identifies the case $m=0$ with the classical
normalization.

\subsection{Right-edge data}

Let $k$ be even and let $m\ge0$ be an integer.  If $S_k(\SLZ)=\{0\}$, the
statements involving an eigenform of weight $k$ are vacuous.  Otherwise fix a
normalized level-one Hecke eigenform $f$ of weight $k$, written
\[
 f(z)=\sum_{n\ge1}a_f(n)e^{2\pi i n z},\qquad a_f(1)=1.
\]
Throughout, put
\begin{align*}
 d&=k-2=2M, & s_0&=k-1,\\
 G(s)&=(2\pi)^{-s}\Gamma(s), &
 \mathcal D_{f,m}(s)&=\frac{\Lambda_f^{(m)}(s)}{G(s)}.
\end{align*}
The level-one functional equation gives
\begin{equation}
 \Lambda_f^{(m)}(s)=(-1)^{k/2+m}\Lambda_f^{(m)}(k-s).
 \label{eq:level-one-derivative-FE}
\end{equation}

With $r=0$ corresponding to the rightmost critical point $s=s_0=k-1$, define
coefficients
\begin{equation}
 c_r=\mathcal D_{f,m}(s_0-r)\frac{(2\pi)^r}{r!},
 \qquad 0\le r\le 2M,
 \label{eq:cr-definition}
\end{equation}
and set
\begin{equation}
 \mathcal R_{f,m,k}(w)=\sum_{r=0}^{2M}c_rw^r.
 \label{eq:R-mk}
\end{equation}
The normalization is chosen so that the functional equation becomes the
signed reflection relation \eqref{eq:c-reciprocity} below.  Reindexing
\eqref{eq:Q-definition-intro} gives
\begin{equation}
 Q_{f,m}(z)=i^{1-2M}(2M)!(2\pi)^{-2M-1}\mathcal R_{f,m,k}(iz).
 \label{eq:Q-R-rotation}
\end{equation}
Moreover \eqref{eq:level-one-derivative-FE} implies
\begin{equation}
 c_{2M-r}=\eta_{k,m}c_r,
 \qquad \eta_{k,m}=(-1)^{k/2+m}.
 \label{eq:c-reciprocity}
\end{equation}
Put
\begin{equation}
 q_{f,m,k}(w)=\sum_{r=0}^{M-1}c_rw^r+\frac12c_Mw^M.
 \label{eq:q-edge}
\end{equation}
Then
\begin{equation}
 \mathcal R_{f,m,k}(w)=q_{f,m,k}(w)+\eta_{k,m}w^{2M}q_{f,m,k}(1/w).
 \label{eq:R-edge-decomposition}
\end{equation}
At the central index this uses $c_M=\eta_{k,m}c_M$ from
\eqref{eq:c-reciprocity}; in particular, $c_M=0$ when $\eta_{k,m}=-1$.
The factor $1/2$ in \eqref{eq:q-edge} avoids double counting the central
coefficient in \eqref{eq:R-edge-decomposition}; the rotation $w=iz$ removes the
phases in $Q_{f,m}$, and odd extraction isolates the relevant parity.  Whenever
$c_0\ne0$, define the odd edge polynomial
\begin{equation}
 p_{f,m,k}(z)=\frac{q_{f,m,k}(iz)-q_{f,m,k}(-iz)}{2ic_0}.
 \label{eq:odd-edge-p}
\end{equation}
This normalization is conditional on $c_0\ne0$, established uniformly in all
later large-weight applications by
Proposition~\ref{prop:transition-near-edge-ratio}.  Define the normalized
signed-reciprocal completion, with completion parameter $2M$, by
\begin{equation}
 P_{f,m,k}(z):=p_{f,m,k}(z)+(-1)^m z^{2M}p_{f,m,k}(1/z).
 \label{eq:normalized-odd-completion}
\end{equation}
Since $p_{f,m,k}$ is odd of degree at most $M$, the reflected term
$z^{2M}p_{f,m,k}(1/z)$ is a polynomial of degree at most $2M-1$; hence $2M$ is
a completion parameter, not necessarily the actual degree.  The definition
gives
\begin{equation}
 P_{f,m,k}(z)=(-1)^m z^{2M}P_{f,m,k}(1/z).
 \label{eq:P-reciprocity}
\end{equation}
Thus $q_{f,m,k}$ is the one-sided right-edge polynomial, $p_{f,m,k}$ is its
rotated odd part, and $P_{f,m,k}$ is the corresponding signed-reciprocal
completion.

We use the standard reality of the Hecke eigenvalues.  Self-adjointness gives
$a_f(n)\in\R$, so $L_f(\bar s)=\overline{L_f(s)}$.  By analytic continuation,
$\Lambda_f^{(m)}(s)$ and $\mathcal D_{f,m}(s)$ are real for $s>0$.  Thus $c_r$,
$\mathcal R_{f,m,k}$, $q_{f,m,k}$, and $p_{f,m,k}$ have real coefficients
whenever they are defined.  The same is true of $P_{f,m,k}$, and $p_{f,m,k}$ is
odd by construction.  Independently, only odd indices $j$ occur in $Q^-_{f,m}$,
with $i^{1-j}\in\{\pm1\}$, so $Q^-_{f,m}\in\R[z]$.

\begin{lemma}[Odd-part completion]
\label{lem:odd-part-completion}
Assume $c_0\ne0$.  There is a nonzero real constant $C_{f,m,k}$ such that
\begin{equation}
 Q_{f,m}^{-}(z)=C_{f,m,k}P_{f,m,k}(z).
 \label{eq:odd-completion}
\end{equation}
In particular, $Q^-_{f,m}\not\equiv0$ if and only if
$P_{f,m,k}\not\equiv0$, and in that case they have the same zeros, with the
same multiplicities.  The completion $P_{f,m,k}$ is odd.  If $M\ge2$ and
$p_{f,m,k}'(0)\ne0$, then $P_{f,m,k}$ has degree $2M-1$ and a simple zero at
the origin.
\end{lemma}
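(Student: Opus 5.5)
The plan is a direct computation: substitute the edge decomposition \eqref{eq:R-edge-decomposition} into the rotation identity \eqref{eq:Q-R-rotation}, take odd parts term by term, and track the signs coming from $\eta_{k,m}$ and from $(iz)^{2M}$. The constant then falls out explicitly, and the degree and origin claims follow by reading off extreme coefficients.

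\emph{Step 1: the odd part of $\mathcal R_{f,m,k}(iz)$.} Write $\kappa=i^{1-2M}(2M)!(2\pi)^{-2M-1}$, so that $Q_{f,m}(z)=\kappa\,\mathcal R_{f,m,k}(iz)$ and hence $Q^-_{f,m}(z)=\kappa\cdot\mathrm{Odd}_z[\mathcal R_{f,m,k}(iz)]$. By \eqref{eq:R-edge-decomposition},
\[
\mathcal R_{f,m,k}(iz)=q_{f,m,k}(iz)+\eta_{k,m}(-1)^M z^{2M}q_{f,m,k}(-i/z).
\]
By \eqref{eq:odd-edge-p}, the odd part of the first term is $ic_0\,p_{f,m,k}(z)$.

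For the second term, $z^{2M}$ is even, so it suffices to take the odd part of $z\mapsto q_{f,m,k}(-i/z)$. This odd part is
\[
\tfrac12\bigl[q_{f,m,k}(-i/z)-q_{f,m,k}(i/z)\bigr]=-ic_0\,p_{f,m,k}(1/z).
\]
Combining the two terms gives
\[
\mathrm{Odd}_z[\mathcal R_{f,m,k}(iz)]=ic_0\bigl[p_{f,m,k}(z)-\eta_{k,m}(-1)^M z^{2M}p_{f,m,k}(1/z)\bigr].
\]

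\emph{Step 2: the sign.} This is the only place an error could creep in, so I would check it carefully. Since $k/2=M+1$, we get $\eta_{k,m}(-1)^M=(-1)^{2M+1+m}=-(-1)^m$. The bracket is therefore exactly $P_{f,m,k}(z)$, and \eqref{eq:odd-completion} holds with
\[
C_{f,m,k}=i c_0\kappa=(-1)^{1-M}(2M)!(2\pi)^{-2M-1}c_0.
\]
This constant is real, because $i\cdot i^{1-2M}=i^{2-2M}=(-1)^{1-M}$ and $c_0$ is real. It is nonzero because $c_0\ne0$. The equivalence of nonvanishing, and the equality of zeros with multiplicities, follow immediately.

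\emph{Step 3: oddness, degree and the origin.} Oddness of $P_{f,m,k}$ holds because $p_{f,m,k}$ is odd and $z^{2M}$ is even. For the degree, write $p_{f,m,k}(z)=\sum_{j\text{ odd},\,j\le M}p_jz^j$ with $p_1=p'_{f,m,k}(0)\ne0$. Then
\[
z^{2M}p_{f,m,k}(1/z)=\sum_j p_jz^{2M-j},
\]
which has top term $p_1z^{2M-1}$. The top degree $2M-1$ cannot be cancelled by $p_{f,m,k}$ itself, because $\deg p_{f,m,k}\le M<2M-1$ when $M\ge2$. Hence $\deg P_{f,m,k}=2M-1$ with leading coefficient $(-1)^mp_1$.

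For the origin, the reflected term has lowest degree $2M-\deg p_{f,m,k}\ge M\ge2$. So the coefficient of $z$ in $P_{f,m,k}$ is $p_1\ne0$, and since $P_{f,m,k}$ has no constant term, the origin is a simple zero.
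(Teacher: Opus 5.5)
Your proposal is correct and is essentially the paper's argument. The paper takes the odd part in $w$ and then substitutes $w=iz$, where you substitute first; otherwise it uses the same identity $q^-(1/(iz))=-ic_0p_{f,m,k}(1/z)$ and the same sign $-\eta_{k,m}(-1)^M=(-1)^m$, and it reads off the degree and the simple zero at the origin from the coefficients of $z^{2M-1}$ and $z$ exactly as you do (your explicit constant $C_{f,m,k}=(-1)^{1-M}(2M)!(2\pi)^{-2M-1}c_0$ is left implicit in the paper).
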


\begin{proof}
Let $q^-(w)=(q_{f,m,k}(w)-q_{f,m,k}(-w))/2$.  Since $2M$ is even,
\eqref{eq:R-edge-decomposition} gives
\[
 \frac{\mathcal R_{f,m,k}(w)-\mathcal R_{f,m,k}(-w)}2
 =q^-(w)+\eta_{k,m}w^{2M}q^-(1/w).
\]
By definition,
\[
 q^-(iz)=ic_0p_{f,m,k}(z),
 \qquad q^-(1/(iz))=-ic_0p_{f,m,k}(1/z),
\]
where the second identity uses oddness.  Finally,
$-\eta_{k,m}(-1)^M=(-1)^m$, because $k/2=M+1$.  Combining these identities with
\eqref{eq:Q-R-rotation} proves \eqref{eq:odd-completion}.

Since $p_{f,m,k}$ is odd and has degree at most $M<2M$, the polynomial
$P_{f,m,k}$ contains only nonnegative odd powers of $z$.  If $M\ge2$ and
$p_{f,m,k}'(0)=a_1\ne0$, then the reflected term contributes the nonzero
coefficient $(-1)^m a_1$ of $z^{2M-1}$, so the degree is $2M-1$.  The
reflected term contributes no $z$-term, because all its exponents are at least
$M\ge2$; hence the coefficient of $z$ is $a_1$, and the zero at the origin is
simple.
\end{proof}

Although the normalized completion requires $c_0\ne0$, the following reality
and signed reciprocity of $Q^-_{f,m}$ are intrinsic.

\begin{lemma}[Reality and signed reciprocity of the odd derivative period polynomial]
\label{lem:Qodd-real-reciprocal}
For every normalized level-one Hecke eigenform $f$ and every integer $m\ge0$, the
polynomial $Q_{f,m}^{-}$ has real coefficients and satisfies
\begin{equation}
 Q_{f,m}^{-}(z)=(-1)^m z^{k-2}Q_{f,m}^{-}(1/z).
 \label{eq:Qodd-reciprocity-direct}
\end{equation}
\end{lemma}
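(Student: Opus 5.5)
The plan is to verify both assertions directly from the definition \eqref{eq:Q-definition-intro}, using only the reality of $\Lambda_f^{(m)}$ on the real axis and the functional equation \eqref{eq:level-one-derivative-FE}. No hypothesis $c_0\ne0$ is needed, so the completion of Lemma~\ref{lem:odd-part-completion} is not used.

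\emph{Reality.} Write $d=k-2$ and
\[
 Q^-_{f,m}(z)=\sum_{j}\binom{d}{j}i^{1-j}\Lambda_f^{(m)}(j+1)z^{d-j},
\]
where $j$ runs over the indices with $d-j$ odd. Since $d$ is even, these are exactly the odd $j$. For odd $j$ we have $i^{1-j}=(-1)^{(1-j)/2}\in\{\pm1\}$. As recorded before Lemma~\ref{lem:odd-part-completion}, $\Lambda_f^{(m)}(s)$ is real for real $s>0$. Hence every coefficient of $Q^-_{f,m}$ is real.

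\emph{Signed reciprocity.} Compute
\[
 z^{d}Q^-_{f,m}(1/z)=\sum_{j\ \mathrm{odd}}\binom{d}{j}i^{1-j}\Lambda_f^{(m)}(j+1)z^{j}.
\]
Reindex by $j'=d-j$, which is again odd, and use $\binom{d}{j}=\binom{d}{j'}$. Then apply \eqref{eq:level-one-derivative-FE} with $s=j+1$: since $k-(j+1)=j'+1$, this gives
\[
 \Lambda_f^{(m)}(j+1)=(-1)^{k/2+m}\Lambda_f^{(m)}(j'+1).
\]
It remains to compare the phases. We have $i^{1-j}=i^{1-d+j'}=i^{1-j'}\,i^{2j'-d}$. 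Because $j'$ is odd, $i^{2j'}=(-1)^{j'}=-1$. Because $d=2M$ with $k/2=M+1$, $i^{-d}=(-1)^M=(-1)^{k/2-1}$. So $i^{1-j}=i^{1-j'}(-1)^{k/2}$.

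Multiplying the two signs gives $(-1)^{k/2}\cdot(-1)^{k/2+m}=(-1)^m$. Hence
\[
 z^{d}Q^-_{f,m}(1/z)=(-1)^mQ^-_{f,m}(z),
\]
which is \eqref{eq:Qodd-reciprocity-direct} after multiplying both sides by $(-1)^m$.

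The only delicate point is this sign bookkeeping. It can be cross-checked against the identity $-\eta_{k,m}(-1)^M=(-1)^m$ used in the proof of Lemma~\ref{lem:odd-part-completion}: there $\eta_{k,m}=(-1)^{k/2+m}$ and $(-1)^M=-(-1)^{k/2}$, so $-\eta_{k,m}(-1)^M=(-1)^{k/2+m}(-1)^{k/2}=(-1)^m$, the same product of signs.
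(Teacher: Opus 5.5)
Your proposal is correct and follows the paper's own route: reality from the realness of $\Lambda_f^{(m)}$ on the positive axis, then pairing each odd index $j$ with $d-j$, applying the functional equation, and comparing the phases $i^{1-j}$ and $i^{1-d+j}$ using $k/2=M+1$ and the oddness of $j$. Your explicit sign bookkeeping, which gives $(-1)^{k/2}\cdot(-1)^{k/2+m}=(-1)^m$, is exactly the coefficient identity that the paper states more tersely.
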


\begin{proof}
The coefficients are real by the preceding observation.  Let $d=k-2$.  Pair
the odd index $j$ with $d-j$, which is again odd.  The
functional equation gives
\[
 \Lambda_f^{(m)}(d-j+1)=(-1)^{k/2+m}\Lambda_f^{(m)}(j+1).
\]
A direct comparison of the two paired coefficients yields
\[
 \binom dj i^{1-d+j}\Lambda_f^{(m)}(d-j+1)
 =(-1)^m\binom dj i^{1-j}\Lambda_f^{(m)}(j+1),
\]
because $k/2=M+1$ and $j$ is odd.  This coefficient identity is exactly
\eqref{eq:Qodd-reciprocity-direct}.
\end{proof}

\subsection{Winding counts for signed-reciprocal completions}

To distinguish angular variables from the macroscopic ratio parameter $\theta$,
we write angles as $\vartheta$ throughout the paper.  Here $N$ denotes the
completion parameter, the exponent of the reflected term,
and need not equal the actual degree of $P_{\sigma,N}$.  The following
polynomial lemmas convert disk-zero counts for $p$ into unit-circle lower bounds
for its signed-reciprocal completion.  The first assumes a zero-free boundary;
the second retains boundary zeros with their multiplicities.

\begin{lemma}[Unit-circle count from disk zeros]
\label{lem:disk-zero-circle-count}
Let $N\ge1$ be an integer, let $p\in\R[z]$ have degree less than $N$, and suppose that
$p$ has no zero on $\T$.  Let $r$ be the number of zeros of $p$ in $\D$, counted
with multiplicity.  For $\sigma\in\{\pm1\}$, assume that
\[
 P_{\sigma,N}(z)=p(z)+\sigma z^Np(1/z)
\]
is not identically zero.  If $r\le N/2$, then $P_{\sigma,N}$ has at least
$N-2r$ distinct zeros on $\T$.
\end{lemma}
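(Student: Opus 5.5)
On $\T$ we have $1/z=\bar z$, and $p$ has real coefficients, so the reflected term is $z^Np(1/z)=z^N\overline{p(z)}$. We therefore write
\[
 P_{\sigma,N}(z)=p(z)+\sigma z^N\overline{p(z)},\qquad |z|=1.
\]
Since $p$ has no zero on $\T$, we can multiply by the unimodular factor $z^{-N/2}\overline{p(z)}/|p(z)|$ along a continuous branch. Parametrize $z=e^{i\vartheta}$ and choose a continuous argument $\phi(\vartheta)$ with $p(e^{i\vartheta})=|p(e^{i\vartheta})|e^{i\phi(\vartheta)}$. Then
\[
 e^{-iN\vartheta/2}e^{-i\phi(\vartheta)}P_{\sigma,N}(e^{i\vartheta})
 =|p(e^{i\vartheta})|\bigl(e^{i(\phi-N\vartheta/2)}+\sigma e^{-i(\phi-N\vartheta/2)}\bigr).
\]
This equals $2|p|\cos\Phi$ when $\sigma=1$ and $2i|p|\sin\Phi$ when $\sigma=-1$, where $\Phi(\vartheta)=\phi(\vartheta)-N\vartheta/2$. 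Because $|p|>0$ on $\T$, the zeros of $P_{\sigma,N}$ on $\T$ correspond exactly to the values $\vartheta\in[0,2\pi)$ at which $\Phi(\vartheta)$ lies in $\pi/2+\pi\Z$ (for $\sigma=1$) or in $\pi\Z$ (for $\sigma=-1$).

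Next we compute the net change of $\Phi$. By the argument principle, with no zeros of $p$ on $\T$, $\phi$ increases by $2\pi r$ over one circuit. Hence
\[
 \Phi(2\pi)-\Phi(0)=2\pi r-N\pi=-\pi(N-2r).
\]
The hypothesis $r\le N/2$ makes $N-2r\ge0$. A continuous function on $[0,2\pi]$ whose total change has absolute value $\pi(N-2r)$ must cross at least $N-2r$ points of any translate of $\pi\Z$ in the half-open sense. More precisely, by the intermediate value theorem applied on $[\vartheta_0,\vartheta_0+2\pi)$, the set of $\vartheta$ at which $\Phi$ hits the lattice $c+\pi\Z$ contains at least $N-2r$ points, distinct modulo $2\pi$.

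The main obstacle is careful bookkeeping at the endpoints, so that we get $N-2r$ \emph{distinct} points on the circle and not $N-2r+1$ values of $\vartheta$ that overcount because $\vartheta=0$ and $\vartheta=2\pi$ give the same $z$. I would handle this by working on the half-open interval $[\vartheta_0,\vartheta_0+2\pi)$, choosing the base point $\vartheta_0$ so that $\Phi(\vartheta_0)\notin c+\pi\Z$; this is possible unless $P_{\sigma,N}$ vanishes identically on $\T$, which is excluded by assumption. The image of $[\vartheta_0,\vartheta_0+2\pi]$ under $\Phi$ is an interval of length at least $\pi(N-2r)$ whose endpoints avoid the lattice, so it contains at least $N-2r$ lattice points. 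Each of these is attained at some $\vartheta$ in the open interval, and distinct lattice values force distinct $\vartheta$, hence distinct zeros $e^{i\vartheta}\in\T$. Finally, the hypothesis $\deg p<N$ keeps the factorization consistent: $P_{\sigma,N}$ is a genuine polynomial, and the identity above holds pointwise on $\T$. The half-integer $N/2$ in the phase causes no problem, because we only need a continuous branch of $e^{-iN\vartheta/2}$ on the interval, not periodicity.
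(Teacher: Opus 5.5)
Your proof is correct and follows essentially the paper's route: the same phase $\Phi=\arg\bigl(e^{-iN\vartheta/2}p(e^{i\vartheta})\bigr)$, the same argument-principle drop of $\pi(N-2r)$, and the same intermediate-value count of lattice crossings. The only differences are the endpoint bookkeeping and two minor slips. On bookkeeping, the paper counts lattice points in the half-open interval $(\phi(2\pi),\phi(0)]$ with base point $0$, whereas you shift the base point off the lattice and use an open interval. On the slips: the multiplier should be $e^{-iN\vartheta/2}$ alone, since including $\overline{p(z)}/|p(z)|$ leaves an extra unimodular factor $e^{-i\phi}$ in your displayed identity (harmless for locating zeros), and the interval whose endpoints avoid the lattice is the one between $\Phi(\vartheta_0)$ and $\Phi(\vartheta_0+2\pi)$, not the full image of $\Phi$.
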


\begin{proof}
On $z=e^{i\vartheta}$ put $H(\vartheta)=e^{-iN\vartheta/2}p(e^{i\vartheta})$.  The function
$H$ never vanishes.  By the argument principle, a continuous lift
$\phi(\vartheta)=\arg H(\vartheta)$ satisfies
\[
 \phi(\vartheta+2\pi)-\phi(\vartheta)=2\pi r-N\pi=-\pi(N-2r).
\]
Moreover,
\[
 e^{-iN\vartheta/2}P_{1,N}(e^{i\vartheta})=2\re H(\vartheta),
 \qquad
 e^{-iN\vartheta/2}P_{-1,N}(e^{i\vartheta})=2i\im H(\vartheta).
\]
Let $L=N-2r$.  If $L=0$ there is nothing to prove.  For $\sigma=1$ the zeros
on $\T$ are the points at which $\phi(\vartheta)$ meets the lattice
$\pi/2+\pi\Z$, while for $\sigma=-1$ they are the points at which
$\phi(\vartheta)$ meets $\pi\Z$.  Let $\mathcal L_\sigma$ denote the relevant
lattice.  Put $a=\phi(0)$ and $b=\phi(2\pi)=a-L\pi$.  The half-open interval
$I_\phi=(b,a]$ has length $L\pi$ and therefore contains exactly $L$ points of
$\mathcal L_\sigma$.  Its endpoint convention includes the initial value
$\phi(0)$ and excludes the terminal value $\phi(2\pi)$, so a zero represented
by both $\vartheta=0$ and $\vartheta=2\pi$ is not counted twice.  For every
$\ell\in I_\phi\cap\mathcal L_\sigma$, the intermediate value theorem gives a
point $\vartheta_\ell\in[0,2\pi)$ with $\phi(\vartheta_\ell)=\ell$.  Distinct
levels give distinct points $e^{i\vartheta_\ell}$.  Hence $P_{\sigma,N}$ has at
least $L=N-2r$ distinct zeros on $\T$.
\end{proof}

\begin{lemma}[Unit-circle count with boundary zeros]
\label{lem:disk-zero-circle-count-boundary}
Let $N\ge1$ be an integer, let $p\in\R[z]$ have degree less than $N$, and put
\[
 P_{\sigma,N}(z)=p(z)+\sigma z^Np(1/z),\qquad \sigma\in\{\pm1\}.
\]
Assume that $P_{\sigma,N}$ is not identically zero.  Let $r$ be the number of
zeros of $p$ in the open unit disk and let $t$ be the number of zeros of $p$ on
$\T$, both counted with multiplicity.  If $r\le (N-t)/2$, then
$P_{\sigma,N}$ has at least $N-2r$ zeros on $\T$, counted with multiplicity.  In
particular, if $p$ has at most $R$ zeros in $\overline\D$ and $R\le N/2$, then
$P_{\sigma,N}$ has at least $N-2R$ zeros on $\T$.
\end{lemma}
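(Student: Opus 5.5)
I would reduce Lemma~\ref{lem:disk-zero-circle-count-boundary} to Lemma~\ref{lem:disk-zero-circle-count} by factoring out the boundary zeros of $p$.

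\textbf{Factoring.} Since $p$ is real, its zeros on $\T$ are closed under conjugation. So
\[
 p(z)=B(z)\,p_0(z),
\]
where $B$ is the real monic polynomial of degree $t$ carrying exactly the zeros of $p$ on $\T$, with multiplicity, and $p_0\in\R[z]$ has no zeros on $\T$ and exactly $r$ zeros in $\D$.

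\textbf{Self-reciprocity of $B$.} For a real polynomial with all zeros on $\T$, the reversal $z^tB(1/z)$ has zeros $1/\bar\zeta=\zeta$ at each zero $\zeta$ of $B$. Hence $z^tB(1/z)=\epsilon B(z)$ for some constant $\epsilon$. Comparing constant and leading coefficients, together with $|B(0)|=1$ and reality, gives $\epsilon=B(0)\in\{\pm1\}$.

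\textbf{Reduction.} With $N'=N-t$,
\[
 P_{\sigma,N}(z)=B(z)\,p_0(z)+\sigma z^{N'}p_0(1/z)\,\bigl(z^tB(1/z)\bigr)
 =B(z)\bigl(p_0(z)+\sigma\epsilon\, z^{N'}p_0(1/z)\bigr).
\]
Here $\deg p_0=\deg p-t<N-t=N'$. If $N'=0$ then $p_0=0$, which is impossible because $P_{\sigma,N}\not\equiv0$; so $N'\ge1$. The second factor is $P_{\sigma\epsilon,N'}$ built from $p_0$, and it is not identically zero because $P_{\sigma,N}$ is not. The hypothesis $r\le (N-t)/2=N'/2$ is exactly what Lemma~\ref{lem:disk-zero-circle-count} requires. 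That lemma then gives at least $N'-2r$ distinct zeros of the second factor on $\T$.

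\textbf{Counting.} Adding the $t$ zeros of $B$ on $\T$, counted with multiplicity, gives at least
\[
 t+(N-t-2r)=N-2r
\]
zeros of $P_{\sigma,N}$ on $\T$, counted with multiplicity. There is no double counting: zeros of distinct factors that coincide simply add their multiplicities.

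\textbf{The ``in particular'' clause.} Suppose $p$ has at most $R$ zeros in $\overline\D$ and $R\le N/2$. Then $r+t\le R$. Thus $2r\le 2R-2t\le N-2t\le N-t$, so the main hypothesis $r\le (N-t)/2$ holds. Moreover $N-2r\ge N-2R$, which gives the stated bound.

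\textbf{Main point to check.} The only delicate step is the sign identity $z^tB(1/z)=\pm B(z)$. It needs the reality of $p$, so that the unit-circle zeros close under conjugation, and the fact that $B$ is monic. Real roots $\pm1$ and conjugate pairs $e^{\pm i\vartheta}$ each contribute factors $(z\mp1)$ and $z^2-2\cos\vartheta\, z+1$, and these are self-reciprocal up to sign, which confirms the identity.
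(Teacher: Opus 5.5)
Your proposal is correct and follows essentially the same route as the paper: factor $p=p_Tp_0$ with $p_T$ carrying the unit-circle zeros, use $z^tp_T(1/z)=\pm p_T(z)$ to write $P_{\sigma,N}=p_T\cdot\bigl(p_0+\sigma\chi_T z^{N-t}p_0(1/z)\bigr)$, and apply Lemma~\ref{lem:disk-zero-circle-count} to $p_0$ with completion parameter $N-t$. Your additional checks ($N-t\ge1$, nonvanishing of the second factor, and the explicit sign $\epsilon=B(0)$) fill in details the paper leaves implicit.
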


\begin{proof}
Factor $p=p_Tp_0$, where $p_T$ is the monic factor whose zeros are precisely
the zeros of $p$ on $\T$, with multiplicity.  Since $p$ has real coefficients,
$z^t p_T(1/z)=\chi_T p_T(z)$ for some real sign
$\chi_T\in\{\pm1\}$.  Hence
\[
 P_{\sigma,N}(z)=p_T(z)\left(p_0(z)+\sigma\chi_T z^{N-t}p_0(1/z)\right).
\]
The polynomial $p_0$ has no zeros on $\T$ and has exactly $r$ zeros in $\D$.
Applying Lemma~\ref{lem:disk-zero-circle-count} to $p_0$ with completion
parameter $N-t$ gives at least $N-t-2r$ zeros on $\T$.  The factor $p_T$
contributes the remaining $t$ unit-circle zeros, counted with multiplicity.
For the final assertion, $r+t\le R\le N/2$ gives $r\le(N-t)/2$, and the
preceding lower bound $N-2r$ is at least $N-2R$.
\end{proof}

\begin{remark}[Effectivity]
The winding lemmas are exact; quantitative loss enters only through the later
zero estimates for the edge polynomial $p$.
\end{remark}

\subsection{Compatibility with the classical normalization}

\begin{lemma}[The case $m=0$ and the classical odd period polynomial]
\label{lem:mzero-classical-period}
Let
\[
 r_f(z):=\int_0^{i\infty} f(\tau)(\tau-z)^{k-2}\,d\tau
\]
be the classical period polynomial in the integral normalization used here, and
write $r_f^{-}(z)=(r_f(z)-r_f(-z))/2$.  Then
\begin{equation}
 Q_{f,0}(z)=r_f(z),
 \qquad Q_{f,0}^{-}(z)=r_f^{-}(z).
 \label{eq:mzero-equals-classical-period}
\end{equation}
With the coefficient convention displayed in \CFI~\cite[formula (1.1)]{ConreyFarmerImamoglu},
their odd period polynomial is precisely the polynomial $r_f^{-}$ above.  No
rotation or change of variable is involved in this comparison.  Applying
\cite[Theorem 1.1]{ConreyFarmerImamoglu} in this normalization gives simple
zeros at $0$, $\pm2$, and $\pm2^{-1}$, double zeros at $\pm1$, and places every
other zero on $\T$.  In particular, the containment needed here is
\[
 \Zset(Q_{f,0}^{-})=\Zset(r_f^{-})\subseteq \T\cup\{0,\pm 2,\pm 2^{-1}\}.
\]
\end{lemma}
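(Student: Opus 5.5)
The plan is a direct coefficient comparison: expand the binomial in the integrand, evaluate each resulting moment by the Mellin transform along the imaginary axis, and check that the phases agree with those in \eqref{eq:Q-definition-intro}.

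\emph{Expansion and Mellin step.} Since $f$ is a cusp form, $f(it)$ decays exponentially at both ends of $(0,\infty)$. Hence each moment $\int_0^{i\infty}f(\tau)\tau^j\,d\tau$ converges absolutely, and we may expand
\[
 r_f(z)=\sum_{j=0}^{k-2}\binom{k-2}{j}(-z)^{k-2-j}\int_0^{i\infty}f(\tau)\tau^j\,d\tau .
\]
Substitute $\tau=it$, so $d\tau=i\,dt$. The standard Mellin identity
\[
 \int_0^\infty f(it)t^{s-1}\,dt=(2\pi)^{-s}\Gamma(s)L_f(s)=\Lambda_f(s)
\]
is obtained termwise from $\int_0^\infty e^{-2\pi nt}t^{s-1}\,dt=(2\pi n)^{-s}\Gamma(s)$ for $\re s$ large, and then for all $s$ by analytic continuation. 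It gives
\[
 \int_0^{i\infty}f(\tau)\tau^j\,d\tau=i^{j+1}\Lambda_f(j+1).
\]
So the coefficient of $z^{k-2-j}$ in $r_f$ is $\binom{k-2}{j}(-1)^{k-2-j}i^{j+1}\Lambda_f(j+1)$.

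\emph{Phase comparison.} Because $k$ is even, $(-1)^{k-2-j}=(-1)^j$. Also $i^{j+1}=i^{2j}i^{1-j}=(-1)^j i^{1-j}$. The phase is therefore $(-1)^j(-1)^j i^{1-j}=i^{1-j}$, which is exactly the phase in \eqref{eq:Q-definition-intro} at $m=0$, where $\Lambda_f^{(0)}=\Lambda_f$. This proves $Q_{f,0}=r_f$. Taking odd parts of both sides gives $Q^-_{f,0}=r_f^-$.

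\emph{Matching with \CFI.} The remaining step is bookkeeping, and I expect it to be the only delicate point. One must verify that \cite[formula (1.1)]{ConreyFarmerImamoglu} defines the period polynomial by the same integral $\int_0^{i\infty}f(\tau)(\tau-z)^{k-2}d\tau$, or by an expansion with the same coefficients, and not by a conjugated variable, a rescaling, or an overall constant. An overall nonzero constant would not move the zeros. A substitution such as $z\mapsto -z$ or $z\mapsto 1/z$ would not move them either, because the zero set $\{0,\pm2,\pm\tfrac12,\pm1\}\cup\T$ is invariant under sign change and inversion; the simple zero at $0$ is also preserved, since $r_f^-$ is odd with nonzero linear coefficient, a fact inherited from their theorem. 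Once this identification is made, \cite[Theorem~1.1]{ConreyFarmerImamoglu} gives the stated zeros and multiplicities, and the final displayed containment follows immediately.
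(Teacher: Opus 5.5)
The expansion, the Mellin evaluation $\int_0^{i\infty}f(\tau)\tau^j\,d\tau=i^{j+1}\Lambda_f(j+1)$, and the phase check $(-1)^{k-2-j}i^{j+1}=i^{1-j}$ are exactly the paper's argument for $Q_{f,0}=r_f$ and $Q^-_{f,0}=r_f^-$, and they are correct.

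The gap is in the last step, which you defer. The lemma asserts that the \CFI\ odd period polynomial of \cite[formula (1.1)]{ConreyFarmerImamoglu} is literally $r_f^-$, with no rotation or change of variable. You do not establish this. Your fallback argument is that the conclusion survives an overall constant, $z\mapsto -z$, or $z\mapsto 1/z$. That does not close the step, because it does not cover a rotation or rescaling of the variable, and those are exactly the discrepancies that would matter. Under $z\mapsto iz$, the points $\pm2$ and $\pm\tfrac12$ become $\pm2i$ and $\pm\tfrac i2$, which do not lie in $\T\cup\{0,\pm2,\pm2^{-1}\}$. This is not a hypothetical worry here: the paper itself passes between $Q_{f,m}$ and $\mathcal R_{f,m,k}$ through the rotation $w=iz$ in \eqref{eq:Q-R-rotation}. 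So a convention differing by such a substitution is a real possibility that must be ruled out explicitly.

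The missing check is short, and it is what the paper does. For odd $j$ one has $i^{1-j}=(-1)^{(j-1)/2}$ and $\Lambda_f(j+1)=j!(2\pi)^{-j-1}L_f(j+1)$. Hence the coefficient of $z^{k-2-j}$ in $r_f^-$ is the real number
\[
 (-1)^{(j-1)/2}\binom{k-2}{j}j!(2\pi)^{-j-1}L_f(j+1).
\]
The paper identifies these as precisely the coefficients displayed in \cite[formula (1.1)]{ConreyFarmerImamoglu}, attached to the same monomials $z^{k-2-j}$. So no substitution in $z$ intervenes, and \cite[Theorem~1.1]{ConreyFarmerImamoglu} applies verbatim. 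Your argument does correctly handle the only remaining discrepancy, an overall nonzero scalar.
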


\begin{proof}
Expanding the integral and putting $\tau=iy$ gives
\begin{align*}
 r_f(z)
 &=\sum_{j=0}^{k-2}\binom{k-2}{j}(-z)^{k-2-j}
   \int_0^{i\infty} f(\tau)\tau^j\,d\tau  \\
 &=\sum_{j=0}^{k-2}\binom{k-2}{j}(-1)^j i^{j+1}
   \Lambda_f(j+1)z^{k-2-j}.
\end{align*}
Since $k-2$ is even, $(-1)^ji^{j+1}=i^{1-j}$ for every $j$.  This is precisely
\eqref{eq:Q-definition-intro} with $m=0$, proving both identities in
\eqref{eq:mzero-equals-classical-period}.  For odd $j$, the coefficient is
\[
\begin{aligned}
 &(-1)^{(j-1)/2}\binom{k-2}{j}\Lambda_f(j+1)\\
 &\qquad={(-1)^{(j-1)/2}\binom{k-2}{j}}
   j!(2\pi)^{-j-1}L_f(j+1).
\end{aligned}
\]
This is the coefficient convention in
\cite[formula (1.1)]{ConreyFarmerImamoglu}.  The associated monomial is the same
$z^{k-2-j}$, so no substitution such as $z\mapsto iz$ is involved.
Multiplication of the entire period polynomial by a nonzero scalar does not
change its zero set.
\end{proof}

\section{Uniform right-edge estimates}
\label{sec:right-edge-estimates}

This section proves the analytic approximation used in every later large-weight
zero argument.  The saddle analysis controls the first $O(\log k)$ right-edge
coefficients; because the degree grows with $k$, a separate factorial majorant
is required for the remaining coefficients.  Together these estimates yield an
exponential model for $q_{f,m,k}$, and hence a sine model for $p_{f,m,k}$,
uniformly in the integer derivative order $m\ge0$ and in the normalized
eigenform $f$.

We use the following split-Mellin notation throughout the paper: for
$S\in\R$, put
\begin{equation}
 \mathcal I_{f,m}(S):=\int_1^\infty f(iy)y^S(\log y)^m\,\frac{dy}{y}.
 \label{eq:log-moment}
\end{equation}
We also use the corresponding model moments
\begin{equation}
 \mathcal J_{n,m}(S):=\int_1^\infty e^{-2\pi n y}y^S(\log y)^m\,\frac{dy}{y},
 \qquad \mathcal J_m(S):=\mathcal J_{1,m}(S).
 \label{eq:model-moments}
\end{equation}
These definitions will also be used later in the fixed-weight argument.

The proof first establishes local model ratios and the weighted tail needed at
$\alpha\to0$, then transfers the ratios to modular $L$-derivatives through the
split-Mellin formula, and finally controls the remaining coefficients by a
global factorial envelope.  Unless a statement says otherwise, every implied
constant in this section is independent of $m$ and $f$; dependence on fixed
auxiliary parameters such as $C$, $R$, and $A$ is displayed.  The first-order
Gamma expansion needed at the upper endpoint is isolated in
Appendix~\ref{sec:bell-appendix}.

\subsection{The saddle parameter and the local Mellin model}

This subsection isolates the one-dimensional saddle which controls the right
edge before the cusp-form Fourier coefficients are reintroduced.

Put $s_0=k-1$.  The function
$x\mapsto 2\pi e^x-m/x$ is strictly increasing on $x>0$, so in the present
range $s_0>2\pi$ there is a unique solution $x_0=x_0(k,m)>0$ of
\begin{equation}
 2\pi e^{x_0}=s_0+\frac{m}{x_0}.
 \label{eq:transition-saddle}
\end{equation}
Define
\begin{equation}
 \alpha_{k,m}:=\frac{s_0}{2\pi e^{x_0}}
 =\left(1+\frac{m}{s_0x_0}\right)^{-1}\in(0,1].
 \label{eq:transition-alpha}
\end{equation}
The change of variables $y=e^x$ shows that the saddle is governed by the Mellin
parameter $S$, rather than by the exponent $S-1$ in $y^{S-1}\,dy$.

\begin{proposition}[Uniform split-Mellin near-edge ratio]
\label{prop:transition-near-edge-ratio}
Fix $C>0$.  For all sufficiently large even weights $k$, uniformly for all
integers $m\ge0$, all normalized Hecke eigenforms $f\in S_k(\SLZ)$, and all
integers $0\le r\le C\log k$, the denominator $\mathcal D_{f,m}(s_0)$ is nonzero and
\begin{equation}
 \frac{\mathcal D_{f,m}(s_0-r)}{\mathcal D_{f,m}(s_0)}
 =\left(\alpha_{k,m}\right)^r
 \left(1+O_C\!\left(\frac{(1+r)^2}{k}\right)\right).
 \label{eq:transition-ratio-proved}
\end{equation}
\end{proposition}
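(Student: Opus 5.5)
We would start from the split-Mellin representation of $\Lambda_f^{(m)}$. Since $\Lambda_f(s)=\int_0^\infty f(iy)y^{s}\,dy/y$ and the level-one modularity $f(i/y)=(-1)^{k/2}y^kf(iy)$ hold, splitting at $y=1$ gives
\[
 \Lambda_f(s)=\int_1^\infty f(iy)\bigl(y^{s}+(-1)^{k/2}y^{k-s}\bigr)\frac{dy}{y}.
\]
Differentiating $m$ times in $s$ then yields
\[
 \Lambda_f^{(m)}(s)=\mathcal I_{f,m}(s)+(-1)^{k/2+m}\mathcal I_{f,m}(k-s).
\]
At $s=s_0-r$ with $r\le C\log k$, the reflected term is $\mathcal I_{f,m}(1+r)$, which has bounded Mellin parameter. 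We then expand $f(iy)=e^{-2\pi y}+\sum_{n\ge2}a_f(n)e^{-2\pi ny}$ and use Deligne's bound $|a_f(n)|\le d(n)n^{(k-1)/2}$. This gives
\[
 \mathcal I_{f,m}(S)=\mathcal J_m(S)+E,
\]
where $E$ is controlled by $\sum_{n\ge2} d(n)n^{(k-1)/2}\mathcal J_{n,m}(S)$.

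\textbf{Model moments.} In the variable $x=\log y$ we have
\[
 \mathcal J_{n,m}(S)=\int_0^\infty e^{Sx-2\pi n e^x}x^m\,dx.
\]
For $n=1$ and $S=s_0-r$, the phase $\Phi(x)=Sx-2\pi e^x+m\log x$ has its maximum at the saddle. For $r=0$ this is exactly $x_0$ from \eqref{eq:transition-saddle}, and $-\Phi''(x_0)=2\pi e^{x_0}+m/x_0^2\ge s_0$, so the saddle has width at most $O(k^{-1/2})$.

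The key identity is the ratio $\mathcal J_m(s_0-r)/\mathcal J_m(s_0)=\langle e^{-rx}\rangle$, the average of $e^{-rx}$ under the probability density proportional to $e^{\Phi_0}$ with $\Phi_0(x)=s_0x-2\pi e^x+m\log x$. We would write $e^{-rx}=e^{-rx_0}e^{-r(x-x_0)}$ and use that the density concentrates at $x_0$ with variance $\asymp 1/(-\Phi_0''(x_0))\le 1/s_0$ and mean shift $O(1/k)$. A cumulant expansion then gives
\[
 \langle e^{-r(x-x_0)}\rangle=1+O\bigl((r+r^2)/k\bigr),
\]
provided the log-concavity of $\Phi_0$ yields sub-Gaussian tails uniformly in $m$. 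Since $e^{-rx_0}=(s_0/(2\pi e^{x_0}))^r\cdot(2\pi/s_0)^r$, this produces $\alpha_{k,m}^r$ times the factor $(2\pi/s_0)^r$.

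It remains to check how $G$ absorbs that factor. We have
\[
 \frac{G(s_0)}{G(s_0-r)}=(2\pi)^{-r}\frac{\Gamma(s_0)}{\Gamma(s_0-r)}=(2\pi)^{-r}s_0^r\bigl(1+O(r^2/k)\bigr),
\]
so the ratio of $\mathcal D$'s equals $\alpha_{k,m}^r(1+O((1+r)^2/k))$, as required. Uniformity in $m$, including $m\gg k\log k$ where $x_0$ is large, is handled by the log-concavity and variance bounds, which hold for all $m$.

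\textbf{Errors.} Three errors must be shown relatively $O(k^{-1})$, or at least exponentially small, against $\mathcal J_m(s_0)$.

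First, the $n\ge2$ terms. With $e^{Sx-4\pi e^x}$ and the factor $n^{(k-1)/2}$, we compare phases: replacing $2\pi$ by $2\pi n$ costs roughly $e^{-2\pi(n-1)e^{x}}$ at the relevant $x$ while gaining $n^{k/2}$. Since $2\pi e^{x_0}\ge s_0=k-1$, the $n=2$ term is about $2^{k/2}e^{-(k-1)}$ relative to the main term, which is exponentially small. Summing over $n$ is routine.

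Second, the reflected term $\mathcal I_{f,m}(1+r)$. It is at most $\int_0^\infty e^{(1+r)x-2\pi e^x}x^m\,dx$, up to constants from the Fourier tail. We compare it with $\mathcal J_m(s_0-r)$, which exceeds it by a factor $e^{(s_0-2r-1)x}$ in the relevant region. For $m$ small this is a gain of roughly $\Gamma(k-1-r)(2\pi)^{-(k-1-r)}/\Gamma(1+r)$, which is superexponential. For large $m$ we would use monotonicity in $S$ of the normalized moment $\log\mathcal J_m(S)$, which is convex with derivative at least the saddle location, and bound the ratio by $e^{-(s_0-2r-2)\,x_*}$, where $x_*$ is the saddle at small $S$; this is still tiny because $x_*\gtrsim\log(m)$ when $m$ is large.

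Third, nonvanishing of $\mathcal D_{f,m}(s_0)$ follows immediately once the main term dominates.

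\textbf{Main obstacle.} We expect the hardest step to be the uniform-in-$m$ comparison of the reflected term with the main term, together with the $n\ge2$ tail, in the regime $m\gtrsim k$. There the saddle drifts as $x_0\sim\log(m/\log m)$ and the naive Gamma comparisons fail. We would first try to handle both via log-convexity of $S\mapsto\mathcal J_{n,m}(S)$, interpolating the moment ratio by $\exp\int \partial_S\log\mathcal J$ and bounding the logarithmic derivative below by the saddle.
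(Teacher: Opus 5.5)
Your skeleton matches the paper's: the split-Mellin identity, Deligne's bound, concentration of $\varpi_{s_0}$, and the Gamma ratio that turns $e^{-rx_0}$ into $\alpha_{k,m}^r$. Two of your steps are acceptable variants of the paper's integration of $\frac{d}{dS}\log\mathcal J_m(S)=x_S+O(S^{-1})$: computing the model ratio as the exponential moment $\langle e^{-r(x-x_0)}\rangle$ at fixed $s_0$, and bounding $\mathcal J_m(1+r)/\mathcal J_m(s_0-r)$ by convexity of $\log\mathcal J_m$.

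The genuine gap is the reflected term. You claim that $\mathcal I_{f,m}(1+r)$ is at most $\mathcal J_m(1+r)$ ``up to constants from the Fourier tail'', and then compare only the $n=1$ mode with the main term. This is false. The $n\ge2$ modes carry weights up to $\tau(n)n^{(k-1)/2}$, while $\mathcal J_{n,m}(1+r)$ decays only like $e^{-2\pi n}$. Already for $m=r=0$ one has $\mathcal J_{n,0}(1)=e^{-2\pi n}/(2\pi n)$. Hence the Deligne majorant $\sum_{n\ge2}\tau(n)n^{(k-1)/2}\mathcal J_{n,0}(1)$ exceeds $\mathcal J_0(1)$ by a factor $e^{(k/2)\log k+O(k)}$. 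These modes encode the size of $f(iy)$ near $y=1$, and your argument never bounds them. The missing step is termwise monotonicity in the Mellin parameter: since $y\ge1$ and $1+r\le s_0-r$, one has $\mathcal J_{n,m}(1+r)\le\mathcal J_{n,m}(s_0-r)$ for every $n$. This moves every reflected mode to the parameter $s_0-r$, where the Deligne sum converges.

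Making that sum converge needs a second tool you do not supply. Your heuristic for the $n\ge2$ tail, evaluating $e^{-2\pi(n-1)e^x}$ at the saddle, is inaccurate because the tilt moves the saddle by $O(1)$. For $m=0$, $n=2$ the true ratio is about $2^{-S}$, not $e^{-S}$. Moreover, a split at $|x-x_S|\le1$ with tail bound $e^{-cB_S}$ only gives $e^{-ck}$, since $B_S\asymp k$ when $m$ is small. That cannot absorb $\sum_n\tau(n)n^{(k-1)/2}e^{-2\pi n}=e^{(k/2)\log k+O(k)}$.

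The paper instead uses the scaling bound $\mathcal J_{n,m}(S)\le n^{-S}\mathcal J_m(S)$, which follows from the substitution $t=ny$ and $\log(t/n)\le\log t$. At $S=s_0-r$ it gives
\[
 \sum_{n\ge2}|a_f(n)|\,\mathcal J_{n,m}(s_0-r)\le\mathcal J_m(s_0-r)\sum_{n\ge2}\tau(n)n^{r-(k-1)/2}=O_C(e^{-c_Ck})\,\mathcal J_m(s_0-r).
\]
Combined with the monotonicity above, this disposes of both the main-term tail and the reflected $n\ge2$ modes.

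Finally, the errors must be measured against $\mathcal J_m(s_0-r)$, not $\mathcal J_m(s_0)$ as you wrote. The factor $e^{-rx_0}$ is not bounded below uniformly in $m$, because $x_0\to\infty$ as $m\to\infty$. So an error of size $O(e^{-ck})\mathcal J_m(s_0)$ in the numerator would not be uniformly negligible.
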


We first establish two model estimates.  In this auxiliary analysis, the
integral formula in \eqref{eq:model-moments} is extended to real $m\ge0$.  For
$S>2\pi$, the change of variables $y=e^x$ gives
\begin{equation}
 \mathcal J_m(S)=\int_0^\infty e^{Sx-2\pi e^x}x^m\,dx
 =\int_0^\infty e^{\Psi_S(x)}\,dx,
 \label{eq:transition-J}
\end{equation}
where $x^0=1$ and, for $x>0$,
\[
 \Psi_S(x)=Sx-2\pi e^x+m\log x.
\]
We suppress the dependence of $\Psi_S$, $x_S$, $B_S$, and the probability
measure below on $m$.  Let $x_S$ be the unique maximum of $\Psi_S$:
\begin{equation}
 2\pi e^{x_S}=S+\frac{m}{x_S},
 \label{eq:transition-model-saddle}
\end{equation}
and put
\begin{equation}
 B_S=-\Psi_S''(x_S)=2\pi e^{x_S}+\frac{m}{x_S^2}
 =S+\frac{m}{x_S}+\frac{m}{x_S^2}.
 \label{eq:transition-B}
\end{equation}
Then $B_S\ge S$, $x_S\ge\log(S/(2\pi))$, and $x_{s_0}=x_0$.

\newpage
\begin{lemma}[Uniform saddle estimates]
\label{lem:transition-model-localization}
There are absolute constants $S_1>2\pi$ and $C_1,C_2>0$ with the following
property.  For $S\ge S_1$ and real $m\ge0$, let
\[
 d\varpi_S(x)=\mathcal J_m(S)^{-1}e^{\Psi_S(x)}\,dx\qquad(x>0).
\]
Then the following basic concentration estimates hold:
\begin{align}
 C_1^{-1}e^{\Psi_S(x_S)}B_S^{-1/2}
 &\le \mathcal J_m(S)\le C_1e^{\Psi_S(x_S)}B_S^{-1/2},
 \label{eq:transition-J-size}\\
 \int_0^\infty (x-x_S)^2\,d\varpi_S(x)&\le \frac{C_2}{B_S},
 \label{eq:transition-variance}\\
 \left|\int_0^\infty x\,d\varpi_S(x)-x_S\right|&\le \frac{C_2}{B_S}.
 \label{eq:transition-mean}
\end{align}
For the exponentially weighted tails, fix $\mathfrak a>0$.  There are
$S_{\mathfrak a}\ge S_1$ and constants $c_{\mathfrak a},C_{\mathfrak a}>0$
such that, whenever $S\ge S_{\mathfrak a}$, $H\ge1$, and
$B_S\ge\mathfrak a^{-1}H^2$, one has
\begin{equation}
 \int_{|x-x_S|>1} e^{H|x-x_S|}\,d\varpi_S(x)
 \le C_{\mathfrak a}e^{-c_{\mathfrak a}B_S}.
 \label{eq:transition-weighted-tail}
\end{equation}
Thus the exponential tilt $H$ may reach the natural square-root curvature scale
$B_S^{1/2}$, with constants depending on $\mathfrak a$.  Moreover, for
$1\le r\le H$ and $n\ge1$,
\begin{equation}
 \int_0^\infty e^{-rx}e^{-2\pi(n-1)e^x}\,d\varpi_S(x)
 \le C_{\mathfrak a}^{\,r}e^{-rx_S}e^{-c_{\mathfrak a}(n-1)e^{x_S}}
       +C_{\mathfrak a}e^{-c_{\mathfrak a}B_S}e^{-2\pi(n-1)}.
 \label{eq:transition-weighted-Jn-tail}
\end{equation}
\end{lemma}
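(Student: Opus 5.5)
The plan is to exploit concavity of $\Psi_S$ on $(0,\infty)$ together with explicit control of its second derivative in a neighbourhood of the saddle. We have
\[
\Psi_S''(x)=-2\pi e^x-\frac{m}{x^2}.
\]
This function is negative and, since $x_S\ge\log(S/(2\pi))$ is large once $S\ge S_1$, it is comparable to $-B_S$ uniformly on $|x-x_S|\le1$.

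Indeed, for $|x-x_S|\le1$ we have $e^{x}\in[e^{-1},e]\,e^{x_S}$. Also $x\ge x_S-1\ge x_S/2$ once $x_S\ge2$, which gives $m/x^2\le 4m/x_S^2$. Hence
\[
c\,B_S\le-\Psi_S''(x)\le C\,B_S \qquad\text{on }|x-x_S|\le1,
\]
with absolute constants. Taylor expansion at $x_S$ then gives the two-sided quadratic bounds
\[
\Psi_S(x_S)-\tfrac{C}{2}B_S(x-x_S)^2\le\Psi_S(x)\le\Psi_S(x_S)-\tfrac{c}{2}B_S(x-x_S)^2
\]
on that window.

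Outside the window we use concavity. The graph lies below the chord/tangent extension, so for $|x-x_S|\ge1$,
\[
\Psi_S(x)\le\Psi_S(x_S)-\tfrac{c}{2}B_S|x-x_S|.
\]
This holds because the slope at distance $1$ already has size $\ge cB_S$. For $x\to0^+$ the term $m\log x\to-\infty$ only helps, and concavity still holds on all of $(0,\infty)$. This linear-decay bound is the key input throughout.

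Step one is the size estimate \eqref{eq:transition-J-size}. Integrating the quadratic lower bound over $|x-x_S|\le B_S^{-1/2}$ gives the lower bound. The upper bound comes from the Gaussian integral of the quadratic upper bound plus the linear tail, which contributes $O(B_S^{-1}e^{\Psi_S(x_S)-cB_S/2})$ and is negligible.

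Step two handles the variance \eqref{eq:transition-variance} and the mean \eqref{eq:transition-mean}. The variance follows in the same way: $\int (x-x_S)^2e^{\Psi_S}$ is at most $CB_S^{-3/2}e^{\Psi_S(x_S)}$ plus an exponentially small tail, and we divide by the lower bound for $\mathcal J_m(S)$.

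For the mean, a pure Gaussian comparison only gives $O(B_S^{-1/2})$, so I would use an integration by parts instead. Since $\int_0^\infty \Psi_S'(x)e^{\Psi_S(x)}dx=0$ (the boundary terms vanish at $\infty$; at $0$ the integrand behaves like $x^m$, and for $m=0$ the boundary term $e^{-2\pi}$ is exponentially small relative to $\mathcal J_m$), we get $\int\Psi_S'\,d\varpi_S=O(e^{-cB_S})$.

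Next, write $\Psi_S'(x)=\Psi_S''(x_S)(x-x_S)+\rho(x)$. Here
\[
|\rho(x)|\le\tfrac12\sup|\Psi_S'''|\,(x-x_S)^2\le CB_S(x-x_S)^2
\]
near $x_S$, because $\Psi_S'''=-2\pi e^x+2m/x^3$ is also $O(B_S)$ there. Dividing by $B_S$ and using the variance bound gives $|\int(x-x_S)d\varpi_S|\le C\int(x-x_S)^2d\varpi_S+\text{tail}\le C_2/B_S$. The tail region needs the linear decay to absorb the growth of $\rho$, which is at most exponential in $|x-x_S|$.

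Step three is the weighted tail \eqref{eq:transition-weighted-tail}. Using the linear bound, the integrand on $|x-x_S|>1$ is at most
\[
e^{\Psi_S(x_S)}\exp\big((H-\tfrac c2B_S)|x-x_S|\big).
\]
Since $H\le(\mathfrak aB_S)^{1/2}\le\tfrac c4B_S$ once $B_S\ge S\ge S_{\mathfrak a}$ is large, integrating and dividing by $\mathcal J_m(S)\gtrsim e^{\Psi_S(x_S)}B_S^{-1/2}$ yields $C B_S^{1/2}e^{-cB_S/4}$, which gives the claim.

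Step four is \eqref{eq:transition-weighted-Jn-tail}. Split at $|x-x_S|\le1$. On the window, $e^{-rx}\le e^{r}e^{-rx_S}$ and $e^{-2\pi(n-1)e^x}\le e^{-2\pi e^{-1}(n-1)e^{x_S}}$, which gives the first term with $C_{\mathfrak a}=e$. Off the window, I bound $e^{-2\pi(n-1)e^x}\le e^{-2\pi(n-1)}$ using $x>0$, and $e^{-rx}\le e^{-rx_S}e^{r|x-x_S|}\le e^{H|x-x_S|}$ since $x_S>0$ and $r\le H$. Step three then applies.

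I expect the main obstacle to be uniformity in $m$ near $x\to0$ and the mean estimate at scale $B_S^{-1}$. When $m$ is huge relative to $S$, the terms $m/x^2$ and $m/x^3$ dominate, so one must check that $x_S$ stays bounded below (it does, by $x_S\ge\log(S/2\pi)$) and that the third-derivative comparison $|\Psi_S'''|\lesssim B_S$ on the window holds uniformly. That comparison uses $m/x_S^3\le m/x_S^2$ once $x_S\ge1$.
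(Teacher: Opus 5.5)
Your outline follows the paper's proof essentially step for step. It uses the same two-sided curvature comparison on $|x-x_S|\le1$ and the same concavity argument giving a linear drop off that window. It uses the same integration by parts $\int_0^\infty\Psi_S'e^{\Psi_S}\,dx=[e^{\Psi_S}]_0^\infty$ for the mean, including the $e^{-2\pi}$ boundary term when $m=0$. It also uses the same window/complement split for \eqref{eq:transition-weighted-tail} and \eqref{eq:transition-weighted-Jn-tail}. The size, variance, and weighted-tail steps are fine as written.

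One justification in the mean estimate is wrong as stated. On $|x-x_S|>1$ you control $\rho(x)=\Psi_S'(x)+B_S(x-x_S)$ by claiming it grows at most exponentially in $|x-x_S|$, so that the linear decay absorbs it. That holds on the right tail but fails on the left tail when $m>0$. There $\Psi_S'(x)=S-2\pi e^x+m/x$ blows up as $x\downarrow0$, while $|x-x_S|$ stays below $x_S$. Against the majorant $e^{\Psi_S(x_S)-\frac c2B_S|x-x_S|}$, which no longer carries the factor $x^m$, the integral of $m/x$ near $0$ diverges.

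The repair is the one the paper uses, and it needs nothing you do not already have. Since $\Psi_S'>0$ on $(0,x_S)$ and $\Psi_S'<0$ on $(x_S,\infty)$, the tail integrals of $|\Psi_S'|e^{\Psi_S}$ are exact boundary values:
\[
 \int_{|x-x_S|>1}|\Psi_S'(x)|e^{\Psi_S(x)}\,dx
 =e^{\Psi_S(x_S-1)}-\lim_{x\downarrow0}e^{\Psi_S(x)}+e^{\Psi_S(x_S+1)}
 \le 2e^{\Psi_S(x_S)-cB_S/2}.
\]
After this, only the term $B_S\int_{|x-x_S|>1}|x-x_S|\,d\varpi_S$ needs your linear-decay bound, and there it works. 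With this change the argument is complete.
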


\begin{proof}
\noindent\emph{Phase drop and basic concentration.}
Write $u=x-x_S$.  Since $x_S\ge\log(S/(2\pi))$, after increasing $S_1$ we may
assume $x_S\ge4$.  On $|u|\le1$ the two terms $e^{x_S+u}$ and
$(x_S+u)^{-2}$ are comparable with $e^{x_S}$ and $x_S^{-2}$; hence
\[
 cB_S\le -\Psi_S''(x_S+u)\le C B_S\qquad(|u|\le1).
\]
Taylor's formula at the maximum gives
\begin{equation}
 \Psi_S(x_S+u)\le \Psi_S(x_S)-cB_Su^2\qquad(|u|\le1).
 \label{eq:transition-quadratic-drop}
\end{equation}
For $|u|\ge1$ we record the standard concavity step explicitly.  The
function $\Psi_S$ is strictly concave, since
$\Psi_S''(x)=-2\pi e^x-m/x^2<0$.  From
\eqref{eq:transition-quadratic-drop} at $u=1$, the average slope on
$[x_S,x_S+1]$ is at most $-cB_S$; by monotonicity of $\Psi_S'$ this gives
$\Psi_S'(x_S+1)\le -cB_S$, and hence for $u\ge1$,
\[
 \Psi_S(x_S+u)\le \Psi_S(x_S+1)-cB_S(u-1)
              \le \Psi_S(x_S)-cB_Su.
\]
Similarly, the average slope on $[x_S-1,x_S]$ is at least $cB_S$, so
$\Psi_S'(x_S-1)\ge cB_S$.  Since $\Psi_S'$ is decreasing, for $u\le -1$ one has
\[
 \Psi_S(x_S+u)\le \Psi_S(x_S-1)-cB_S(|u|-1)
              \le \Psi_S(x_S)-cB_S|u|.
\]
Together with~\eqref{eq:transition-quadratic-drop}, this gives
\begin{equation}
 \Psi_S(x_S+u)\le \Psi_S(x_S)-cB_S\min(u^2,|u|)
 \label{eq:transition-tail-drop}
\end{equation}
for every $u>-x_S$.

The lower bound in~\eqref{eq:transition-J-size} follows by integrating over
$|u|\le(10\sqrt{B_S})^{-1}$, where Taylor's formula gives
$\Psi_S(x_S+u)\ge\Psi_S(x_S)-C$.  The upper bound in
\eqref{eq:transition-J-size} and the second moment estimate
\eqref{eq:transition-variance} follow by integrating the majorant
\[
 \exp\{-cB_S\min(u^2,|u|)\}.
\]

\smallskip
\noindent\emph{Mean displacement.}
Integration by parts gives
\begin{equation}
 \int_0^\infty \Psi_S'(x)e^{\Psi_S(x)}\,dx
 =\left[e^{\Psi_S(x)}\right]_0^\infty.
 \label{eq:transition-ibp}
\end{equation}
The boundary at infinity is zero.  At the lower endpoint the bracket in
\eqref{eq:transition-ibp} contributes $0$ if $m>0$ and $-e^{-2\pi}$ if
$m=0$.  In the latter case, \eqref{eq:transition-B} gives
$B_S=S$, so \eqref{eq:transition-J-size} and the lower bound
$\Psi_S(x_S)\ge S\log(S/(2\pi))-S$ show that this boundary term is
$O(e^{-cS\log S})\mathcal J_m(S)$.

For $|u|\le1$,
\[
 \Psi_S'(x_S+u)=-B_Su+\mathscr E_S(u),\qquad |\mathscr E_S(u)|\le CB_Su^2.
\]
On $|u|>1$, the two tail integrals can be evaluated from their endpoint
values: since
$(e^{\Psi_S})'=\Psi_S'e^{\Psi_S}$,
\[
\begin{aligned}
 &\int_{0}^{x_S-1}\Psi_S'(x)e^{\Psi_S(x)}dx
 +\int_{x_S+1}^{\infty}\Psi_S'(x)e^{\Psi_S(x)}dx \\
 &\hspace{2em}=e^{\Psi_S(x_S-1)}-\lim_{x\downarrow0}e^{\Psi_S(x)}
  -e^{\Psi_S(x_S+1)}.
\end{aligned}
\]
The two values at $x_S\pm1$ are $O(e^{\Psi_S(x_S)-cB_S})$ by
\eqref{eq:transition-tail-drop}, and the boundary at zero was estimated above.
The same tail majorant, together with~\eqref{eq:transition-J-size}, also gives
\[
 B_S\int_{|x-x_S|>1}|x-x_S|\,d\varpi_S(x)\ll e^{-cB_S}.
\]
Dividing~\eqref{eq:transition-ibp} by $\mathcal J_m(S)$, using the local expansion
of $\Psi_S'$, and then applying~\eqref{eq:transition-variance} and the preceding
tail estimate, after weakening the exponential error we get
\[
 B_S\left|\int(x-x_S)\,d\varpi_S(x)\right|
 \le CB_S\int(x-x_S)^2\,d\varpi_S(x)+O(e^{-cS}),
\]
which proves~\eqref{eq:transition-mean}.

\smallskip
\noindent\emph{Weighted tails.}
Fix $\mathfrak a>0$.  The phase
drop \eqref{eq:transition-tail-drop} and the lower bound in
\eqref{eq:transition-J-size} give
\[
 \int_{|x-x_S|>1}e^{H|x-x_S|}\,d\varpi_S(x)
 \ll B_S^{1/2}\int_1^\infty e^{-(cB_S-H)u}\,du.
\]
The hypothesis $B_S\ge\mathfrak a^{-1}H^2$ implies
$H\le\mathfrak a^{1/2}B_S^{1/2}$.  After increasing $S_{\mathfrak a}$, and
therefore $B_S$, one has $cB_S-H\ge cB_S/2$, so the last display is
$O_{\mathfrak a}(e^{-c_{\mathfrak a}B_S})$.  This proves
\eqref{eq:transition-weighted-tail}.

For \eqref{eq:transition-weighted-Jn-tail}, split the integral into
$|x-x_S|\le1$ and its complement.  On the first region,
$e^{-rx}\le e^r e^{-rx_S}$ and $e^x\ge e^{-1}e^{x_S}$, which gives the first
term after changing the constants.  On the complement,
\[
 e^{-rx}\le e^{-rx_S}e^{r|x-x_S|}\le e^{H|x-x_S|},
 \qquad
 e^{-2\pi(n-1)e^x}\le e^{-2\pi(n-1)}.
\]
Applying \eqref{eq:transition-weighted-tail} gives the second term.
\end{proof}

\begin{lemma}[Uniform model ratio]
\label{lem:transition-model-ratio}
Fix $C>0$.  There exists $c_C>0$ such that, uniformly as $S\to\infty$ for real
$m\ge0$ and real $0\le r\le C\log S$,
\begin{equation}
 \frac{\mathcal J_m(S-r)}{\mathcal J_m(S)}
 =e^{-rx_S}\left(1+O_C\!\left(\frac{(1+r)^2}{S}\right)\right).
 \label{eq:transition-model-ratio}
\end{equation}
Moreover, uniformly for real $1\le S_{\mathrm{lo}}\le C\log S$,
\begin{equation}
 \frac{\mathcal J_m(S_{\mathrm{lo}})}{\mathcal J_m(S)}\le e^{-c_C S}.
 \label{eq:transition-long-drop}
\end{equation}
\end{lemma}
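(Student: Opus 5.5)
The plan is to write the ratio as an integral of the probability measure $\varpi_S$ from Lemma~\ref{lem:transition-model-localization}. Since $\mathcal J_m(S-r)=\int_0^\infty e^{-rx}e^{\Psi_S(x)}\,dx$, we have
\[
 \frac{\mathcal J_m(S-r)}{\mathcal J_m(S)}=\int_0^\infty e^{-rx}\,d\varpi_S(x)
 =e^{-rx_S}\int_0^\infty e^{-ru}\,d\varpi_S(x),\qquad u=x-x_S .
\]
So \eqref{eq:transition-model-ratio} reduces to showing
\[
 \int e^{-ru}\,d\varpi_S=1+O_C\bigl((1+r)^2/S\bigr).
\]
Recall $B_S\ge S$.

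\emph{Case $r\le1$.} Split the range at $|u|\le1$. On that set, Taylor's formula gives
\[
 e^{-ru}=1-ru+O\bigl(r^2u^2e^{r}\bigr).
\]
Integrating, the linear term is controlled by the mean estimate \eqref{eq:transition-mean}, which gives $O(r/B_S)$, after correcting for the excluded tail. The quadratic term is controlled by the variance bound \eqref{eq:transition-variance}, giving $O(r^2/B_S)$.

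\emph{Case $1\le r\le C\log S$.} The same split applies. On $|u|\le1$ we have $e^{r}\le S^C$, which is too crude, so I would instead bound the remainder more sharply:
\[
 |e^{-ru}-1+ru|\le r^2u^2e^{r|u|}.
\]
On $|u|\le1$ the phase drop \eqref{eq:transition-quadratic-drop} gives density $\ll B_S^{1/2}e^{-cB_Su^2}$. Hence
\[
 \int_{|u|\le1} r^2u^2e^{r|u|}\,d\varpi_S \ll r^2B_S^{1/2}\int u^2e^{r|u|-cB_Su^2}\,du \ll r^2/B_S .
\]
This works because $r^2\ll(\log S)^2=o(B_S)$, so completing the square costs only the factor $e^{r^2/(4cB_S)}=1+o(1)$.

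The tail $|u|>1$ is handled by \eqref{eq:transition-weighted-tail} with $H=\max(1,r)$. Its hypothesis $B_S\ge\mathfrak a^{-1}H^2$ holds for large $S$ since $H\ll\log S$. This makes the tail contribution, including the discarded parts of $1$ and $ru$, equal to $O(e^{-cB_S})$, which is negligible. Combining both cases gives $1+O((r+r^2)/B_S)$, which is \eqref{eq:transition-model-ratio}.

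For \eqref{eq:transition-long-drop}, I write
\[
 \mathcal J_m(S_{\mathrm{lo}})=\int e^{-(S-S_{\mathrm{lo}})x}e^{\Psi_S(x)}\,dx .
\]
The difficulty is that for small $x$ the factor $e^{-(S-S_{\mathrm{lo}})x}$ is not small. So instead I compare maxima directly. By \eqref{eq:transition-J-size}, applicable when $S_{\mathrm{lo}}$ is large, or by the crude bound
\[
 \mathcal J_m(S_{\mathrm{lo}})\le \max_x e^{\Psi_{S_{\mathrm{lo}}}(x)}\int e^{-\pi e^x}\ldots
\]
otherwise, it suffices to show $\Psi_S(x_S)-\sup_x\Psi_{S_{\mathrm{lo}}}(x)\ge cS+O(\log S)$. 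The key observation is
\[
 \Psi_S(x)-\Psi_{S_{\mathrm{lo}}}(x)=(S-S_{\mathrm{lo}})x .
\]
Evaluating at $x=x_{S_{\mathrm{lo}}}$ gives
\[
 \Psi_S(x_S)\ge\Psi_S(x_{S_{\mathrm{lo}}})=\Psi_{S_{\mathrm{lo}}}(x_{S_{\mathrm{lo}}})+(S-S_{\mathrm{lo}})x_{S_{\mathrm{lo}}} .
\]
This is only useful if $x_{S_{\mathrm{lo}}}\ge c>0$. When $m$ is small, $x_{S_{\mathrm{lo}}}$ can be tiny, so instead I evaluate at the point $x=x_S$ and use the drop of $\Psi_{S_{\mathrm{lo}}}$ away from its own maximum.

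I expect this uniformity in $m$ (for example, $m=0$ with $S_{\mathrm{lo}}=1$) to be the main obstacle. I would handle it by a direct bound. For $x\ge1$, $\mathcal J_m(S_{\mathrm{lo}})$ restricted to $x\ge1$ is at most $e^{-(S-S_{\mathrm{lo}})}\mathcal J_m(S)$. The part with $x<1$ is at most $e^{S_{\mathrm{lo}}-2\pi}$. Compare this with
\[
 \mathcal J_m(S)\gg e^{\Psi_S(x_S)}S^{-1/2}\ge e^{S\log(S/2\pi)-S}S^{-1/2},
\]
where the inequality uses $x_S\ge\log(S/2\pi)$ and $m\log x_S\ge0$. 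That ratio is $e^{-cS\log S}$. Together these give $e^{-cS}$.
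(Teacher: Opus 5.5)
Your proof of \eqref{eq:transition-model-ratio} is correct. The last step of your proof of \eqref{eq:transition-long-drop}, however, rests on a false inequality.

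You assert $\mathcal J_m(S)\gg e^{\Psi_S(x_S)}S^{-1/2}$. But \eqref{eq:transition-J-size} gives $\mathcal J_m(S)\le C_1e^{\Psi_S(x_S)}B_S^{-1/2}$, where $B_S=S+m/x_S+m/x_S^2$ is unbounded in $m$ for fixed $S$. Hence $\mathcal J_m(S)/(e^{\Psi_S(x_S)}S^{-1/2})\le C_1(S/B_S)^{1/2}\to0$ as $m\to\infty$. Your bound therefore fails exactly where uniformity in $m$ is required, namely at large $m$, not in the case $m=0$, $S_{\mathrm{lo}}=1$ that you flagged.

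The next link in your chain is also misjustified. Dropping $m\log x_S$ and using $x_S\ge\log(S/2\pi)$ cannot give $Sx_S-2\pi e^{x_S}\ge S\log(S/2\pi)-S$: since $Sx-2\pi e^x$ is maximized at $x=\log(S/2\pi)$, that inequality actually goes the other way. The correct reason is the maximality of $x_S$: $\Psi_S(x_S)\ge\Psi_S(\log(S/2\pi))=S\log(S/2\pi)-S+m\log\log(S/2\pi)$. The retained term $m\log\log(S/2\pi)\ge m$ is precisely what offsets $B_S^{1/2}\le(S+2m)^{1/2}$.

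A simpler repair skips the Laplace bound altogether. Since $x^m\ge1$ for $x\ge1$, one has $\mathcal J_m(S)\ge\int_2^3e^{Sx-2\pi e^x}\,dx\ge e^{2S-2\pi e^3}$ uniformly in $m$. Against your bound $e^{S_{\mathrm{lo}}-2\pi}\le S^C$ for the piece $x<1$, this gives a ratio $\le e^{-S}$. With this one-line repair, your split at $x=1$ proves \eqref{eq:transition-long-drop}; its $x\ge1$ piece contributes $e^{-(S-S_{\mathrm{lo}})}$.

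Both halves of your argument take a different route from the paper. The paper sets $\mathscr L(S)=\log\mathcal J_m(S)$ and uses $\mathscr L'(S)=\int x\,d\varpi_S=x_S+O(S^{-1})$ from \eqref{eq:transition-mean}, together with $dx_S/dS=1/B_S\le1/S$. It then integrates $\mathscr L'(S-t)$ over $0\le t\le r$. For the long drop it integrates $\mathscr L'$ over $[S_{\mathrm{lo}},S]$, needing only that $\mathscr L'>0$ everywhere (it is the mean of $x>0$) and that $\mathscr L'\ge c\log S$ on $[S/2,S]$. This handles small $S_{\mathrm{lo}}$ and all $m$ with no case analysis, and even yields $e^{-cS\log S}$.

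Your exponential-tilt expansion of $\int e^{-r(x-x_S)}\,d\varpi_S$ at the single parameter $S$ uses different inputs. It needs the variance bound, the near-saddle density bound $\ll B_S^{1/2}e^{-cB_Su^2}$ (from \eqref{eq:transition-quadratic-drop} and the lower bound in \eqref{eq:transition-J-size}), and the weighted tail \eqref{eq:transition-weighted-tail} with $H\asymp\log S$, where $r^2=o(B_S)$ is essential. In exchange, it avoids differentiating in $S$ and applying the localization lemma at shifted parameters. It is legitimate but longer; the paper's logarithmic-derivative argument is more economical.
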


\begin{proof}
For each fixed $m$, set $\mathscr L(S)=\log \mathcal J_m(S)$; all estimates
below are uniform in $m$.  Differentiation under the integral sign on compact
$S$-intervals is justified by the exponential factor $e^{-2\pi e^x}$.  Thus
\[
 \mathscr L'(S)=\int x\,d\varpi_S(x),
 \qquad
 \mathscr L''(S)=\int\bigl(x-\mathscr L'(S)\bigr)^2d\varpi_S(x).
\]
Lemma~\ref{lem:transition-model-localization} gives, uniformly in $m$,
\begin{equation}
 \mathscr L'(S)=x_S+O(S^{-1}),\qquad \mathscr L''(S)=O(S^{-1}).
 \label{eq:transition-L-derivatives}
\end{equation}
The saddle equation implies $dx_S/dS=1/B_S$; since $B_S\ge S$, the saddle moves
by only $O(t/S)$ when $S$ is replaced by $S-t$.  More explicitly, for
$0\le t\le r\le C\log S$ and large $S$,
\[
 x_{S-t}=x_S+O_C(t/S),
 \qquad
 \mathscr L'(S-t)=x_S+O_C((1+t)/S).
\]
Consequently
\[
 \log\frac{\mathcal J_m(S-r)}{\mathcal J_m(S)}
 =-\int_0^r\mathscr L'(S-t)\,dt
 =-rx_S+O_C\!\left(\frac{r+r^2}{S}\right).
\]
Because $r\le C\log S$, the exponential of the final error is
$1+O_C((1+r)^2/S)$, which proves~\eqref{eq:transition-model-ratio}.

For~\eqref{eq:transition-long-drop}, integrate the lower bound for $\mathscr L'$ in
\eqref{eq:transition-L-derivatives}.  For sufficiently large $S$ one has
$S_{\mathrm{lo}}\le C\log S\le S/2$, and $\mathscr L'(u)>0$ because it is the
$\varpi_u$-expectation of $x>0$.  If $u\ge S/2$, then $x_u\ge\log(u/(2\pi))\ge c\log S$, and the
$O(u^{-1})$ error is negligible; hence $\mathscr L'(u)\ge c\log S$ for all sufficiently
large $S$, uniformly in $m$.  Thus
\[
 \mathscr L(S)-\mathscr L(S_{\mathrm{lo}})\ge\int_{S/2}^S \mathscr L'(u)\,du\ge cS\log S,
\]
which is stronger than~\eqref{eq:transition-long-drop}.
\end{proof}

\subsection{From the model integral to modular \texorpdfstring{$L$}{L}-derivatives}

The split-Mellin formula separates the right-edge contribution from its
functional-equation partner.  We record the exact identity before estimating
either term, since it will also be used in the fixed-weight argument.  Deligne's
bound then makes the $n=1$ term dominant on the right edge, while the reflected
Mellin term is exponentially smaller in the range $r=O(\log k)$.

\begin{lemma}[Split-Mellin identity]
\label{lem:transition-split-mellin}
For every real $s$ and every integer $m\ge0$,
\begin{equation}
 \Lambda_f^{(m)}(s)=\mathcal I_{f,m}(s)+(-1)^{k/2+m}\mathcal I_{f,m}(k-s).
 \label{eq:transition-split-mellin}
\end{equation}
\end{lemma}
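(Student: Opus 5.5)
The plan is to start from the Mellin representation of $\Lambda_f(s)$, split it at $y=1$, and then differentiate $m$ times under the integral sign. For $\re s$ large the Fourier expansion gives
\[
 \Lambda_f(s)=\int_0^\infty f(iy)y^{s}\,\frac{dy}{y}.
\]
Split this at $y=1$. On $(0,1)$, substitute $y\mapsto 1/y$ and use the level-one modularity relation $f(i/y)=i^k y^k f(iy)=(-1)^{k/2}y^kf(iy)$. The result is
\[
 \Lambda_f(s)=\int_1^\infty f(iy)y^{s}\,\frac{dy}{y}+(-1)^{k/2}\int_1^\infty f(iy)y^{k-s}\,\frac{dy}{y}.
\]
Because $f(iy)$ decays like $e^{-2\pi y}$ as $y\to\infty$, both integrals are entire functions of $s$. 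The right-hand side therefore supplies the analytic continuation, and the identity holds for all $s\in\C$.

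Next, differentiate $m$ times in $s$. The factor $e^{-2\pi y}$ dominates every power $y^{|s|}(\log y)^m$ uniformly for $s$ in compact sets, so differentiation under the integral sign is justified. Each derivative of $y^{s}$ contributes a factor $\log y$, which gives $\mathcal I_{f,m}(s)$ from the first integral. Each derivative of $y^{k-s}$ contributes $-\log y$, which gives $(-1)^m\mathcal I_{f,m}(k-s)$ from the second. Combining the two terms yields exactly \eqref{eq:transition-split-mellin}.

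The only point needing care is the sign in the modularity relation. For even $k$, the relation $f(-1/\tau)=\tau^kf(\tau)$ at $\tau=iy$ gives $f(i/y)=(iy)^kf(iy)$, and $i^k=(-1)^{k/2}$, which matches the sign $(-1)^{k/2}$ used above. As a consistency check, setting $m=0$ and comparing with \eqref{eq:level-one-derivative-FE} gives the same sign $(-1)^{k/2}$. Apart from this sign bookkeeping and the routine justification of differentiating under the integral, there is no real obstacle.
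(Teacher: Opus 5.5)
Your proposal is correct and follows essentially the same route as the paper: split the Mellin integral at $y=1$, fold the lower piece onto $[1,\infty)$ via $y\mapsto 1/y$ and $f(i/y)=(-1)^{k/2}y^kf(iy)$, then differentiate $m$ times under the integral (justified by cusp decay), picking up the extra $(-1)^m$ from $(-\log y)^m$. Your remark that the folded representation is entire, and therefore gives the analytic continuation on which the derivative identity holds for every real $s$, is a detail the paper leaves implicit.
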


\begin{proof}
Split the Mellin integral at $y=1$.  On $[1,\infty)$, differentiation under the
integral sign is justified by cusp decay.  In the lower integral, first apply
$y\mapsto1/y$ and the modular transformation
$f(i/y)=(-1)^{k/2}y^k f(iy)$, thereby reducing it to another integral over
$[1,\infty)$; differentiation there is justified in the same way.  Differentiating
$m$ times then gives the stated identity, with the second sign coming from
$\log(1/y)^m=(-\log y)^m$.
\end{proof}

Absolute convergence of the Fourier expansion on $y\ge1$ gives, for every real
$S$,
\begin{equation}
 \mathcal I_{f,m}(S)=\sum_{n\ge1}a_f(n)\mathcal J_{n,m}(S).
 \label{eq:transition-I-Fourier}
\end{equation}

\begin{proof}[Proof of Proposition~\ref{prop:transition-near-edge-ratio}]
By Lemma~\ref{lem:transition-split-mellin}, when $s=s_0-r$ the two Mellin
parameters are $s_0-r$ and $1+r$.

For $\mathcal J_{n,m}$ as in \eqref{eq:model-moments}, the change of variables
$t=ny$ and the inequality $\log(t/n)\le\log t$ for
$t\ge n\ge1$ give
\begin{equation}
 \mathcal J_{n,m}(S)\le n^{-S}\mathcal J_m(S).
 \label{eq:transition-nth-bound}
\end{equation}
For $S=s_0-r=k-1-r$, Deligne's bound~\cite{Deligne}, in the form
$|a_f(n)|\le \tau(n)n^{(k-1)/2}$ with $\tau$ the divisor function, implies
\begin{align*}
 \sum_{n\ge2}|a_f(n)|\mathcal J_{n,m}(S)
 &\le \mathcal J_m(S)
 \sum_{n\ge2}\tau(n)n^{r-(k-1)/2}
 =O_C(e^{-c_Ck})\mathcal J_m(S).
\end{align*}
Together with \eqref{eq:transition-I-Fourier}, this gives
\begin{equation}
 \mathcal I_{f,m}(s_0-r)=\mathcal J_m(s_0-r)(1+O_C(e^{-c_Ck})).
 \label{eq:transition-main-split}
\end{equation}
The $n=1$ contribution to the reflected Mellin term is
$\mathcal J_m(1+r)$.  By \eqref{eq:transition-long-drop}, it is
$O_C(e^{-c_Ck})\mathcal J_m(s_0-r)$.  For $n\ge2$, the condition
$r\le C\log k$ gives $1+r\le s_0-r$ for all large $k$, and hence, directly
from the defining integral,
\[
 \mathcal J_{n,m}(1+r)\le \mathcal J_{n,m}(s_0-r).
\]
Applying~\eqref{eq:transition-nth-bound} at $S=s_0-r$ and Deligne's bound
therefore gives
\begin{align*}
 \sum_{n\ge2}|a_f(n)|\mathcal J_{n,m}(1+r)
 &\le \mathcal J_m(s_0-r)
      \sum_{n\ge2}\tau(n)n^{r-(k-1)/2}\\
 &\le O_C(e^{-c_Ck})\mathcal J_m(s_0-r).
\end{align*}
Therefore
\begin{equation}
 \Lambda_f^{(m)}(s_0-r)
 =\mathcal J_m(s_0-r)(1+O_C(e^{-c_Ck}))
 \label{eq:transition-Lambda-to-J}
\end{equation}
for every $0\le r\le C\log k$, uniformly in $m$ and $f$.

Applying~\eqref{eq:transition-Lambda-to-J} with $r$ and with $r=0$, and then
using Lemma~\ref{lem:transition-model-ratio}, gives
\begin{equation}
 \frac{\Lambda_f^{(m)}(s_0-r)}{\Lambda_f^{(m)}(s_0)}
 =e^{-rx_0}\left(1+O_C\!\left(\frac{(1+r)^2}{k}\right)\right).
 \label{eq:transition-Lambda-ratio}
\end{equation}
The gamma normalization satisfies
\begin{equation}
 \frac{G(s_0)}{G(s_0-r)}
 =(2\pi)^{-r}\frac{\Gamma(s_0)}{\Gamma(s_0-r)}
 =\left(\frac{s_0}{2\pi}\right)^r
 \left(1+O_C\!\left(\frac{r^2}{k}\right)\right),
 \label{eq:transition-G-ratio}
\end{equation}
since $\Gamma(s_0)/\Gamma(s_0-r)=\prod_{j=1}^r(s_0-j)$ and
$\sum_{j\le r}\log(1-j/s_0)=O_C(r^2/k)$.  Multiplying
\eqref{eq:transition-Lambda-ratio} and~\eqref{eq:transition-G-ratio} proves
\eqref{eq:transition-ratio-proved}.

\end{proof}

\subsection{Factorial tails and the moving-edge approximation}

The ratio estimate just proved controls only logarithmically many coefficients.
The next recurrence supplies a uniform factorial envelope for all edge
coefficients; combining the envelope with the local ratios gives the compact
moving-edge approximation.

\begin{lemma}[A backward Mellin recurrence]
\label{lem:transition-backward-recurrence}
For $n\ge1$, integer $m\ge0$, and $S>1$, let $\mathcal J_{n,m}(S)$ be as in
\eqref{eq:model-moments}.  Then
\begin{equation}
 \mathcal J_{n,m}(S-1)\le \frac{2\pi n}{S-1}\mathcal J_{n,m}(S).
 \label{eq:transition-backward-one-step}
\end{equation}
Consequently, if $S$ is a positive integer and $0\le r\le S-1$, then
\begin{equation}
 \mathcal J_{n,m}(S-r)
 \le (2\pi n)^r\frac{(S-r-1)!}{(S-1)!}\mathcal J_{n,m}(S).
 \label{eq:transition-backward-iterated}
\end{equation}
\end{lemma}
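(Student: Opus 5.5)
The natural route is integration by parts in the defining integral of $\mathcal J_{n,m}(S)$, written in the original variable $y$. We have
\[
 \mathcal J_{n,m}(S-1)=\int_1^\infty e^{-2\pi n y}y^{S-1}(\log y)^m\,\frac{dy}{y}
 =\int_1^\infty e^{-2\pi n y}y^{S-2}(\log y)^m\,dy .
\]
We would integrate $y^{S-2}\,dy=d\bigl(y^{S-1}/(S-1)\bigr)$ against the factor $e^{-2\pi n y}(\log y)^m$, which is legitimate because $S>1$.

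The boundary term at infinity vanishes by the exponential decay. At $y=1$ the boundary contribution is $-e^{-2\pi n}(\log 1)^m/(S-1)$. This equals $0$ when $m\ge1$ and $-e^{-2\pi n}/(S-1)\le0$ when $m=0$, so in both cases it is nonpositive. The derivative of the factor is
\[
 \frac{d}{dy}\Bigl(e^{-2\pi n y}(\log y)^m\Bigr)
 = e^{-2\pi n y}\Bigl(-2\pi n(\log y)^m+m(\log y)^{m-1}y^{-1}\Bigr),
\]
where for $m=0$ the second term is absent. Substituting this into the integration by parts gives
\[
 \mathcal J_{n,m}(S-1)=(\text{boundary}\le0)+\frac{2\pi n}{S-1}\mathcal J_{n,m}(S)
 -\frac{m}{S-1}\int_1^\infty e^{-2\pi n y}y^{S-2}(\log y)^{m-1}\,dy .
\]
The last integral is nonnegative because $\log y\ge0$ on $[1,\infty)$, and it converges near $y=1$ when $m\ge1$ since the integrand is bounded there. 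Dropping the boundary term and this last term therefore yields \eqref{eq:transition-backward-one-step}. The one point needing care is the sign of the $m$-term: it is exactly the derivative of $(\log y)^m$, and it enters with a minus sign.

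For the iterated bound \eqref{eq:transition-backward-iterated}, we would induct on $r$ by applying the one-step inequality at the successive parameters $S,S-1,\dots,S-r+1$. Each of these exceeds $1$ as long as $r\le S-1$. Chaining the steps gives
\[
 \mathcal J_{n,m}(S-r)\le\prod_{j=1}^{r}\frac{2\pi n}{S-j}\,\mathcal J_{n,m}(S),
\]
and since $\prod_{j=1}^r(S-j)=(S-1)!/(S-r-1)!$ for integer $S$, this is exactly \eqref{eq:transition-backward-iterated}. The case $r=0$ is trivial.

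No step here is a real obstacle. The only checks are that each intermediate parameter stays above $1$ and that every discarded term is nonpositive; both are verified above.
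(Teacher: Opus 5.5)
Your proposal is correct and is essentially the paper's proof. Your integration by parts is the same computation as the paper's integration of the derivative of $e^{-2\pi n y}y^{S-1}(\log y)^m$ over $[1,\infty)$. Both give $2\pi n\,\mathcal J_{n,m}(S)=(S-1)\mathcal J_{n,m}(S-1)+m\,\mathcal J_{n,m-1}(S-1)$, plus an extra $e^{-2\pi n}$ when $m=0$, and in both the discarded terms are nonnegative; the chaining over the parameters $S,S-1,\dots,S-r+1$ is then the same iteration.
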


\begin{proof}
Integrating the derivative of $e^{-2\pi n y}y^{S-1}(\log y)^m$ on
$[1,\infty)$ gives, when $m=0$,
\[
 2\pi n \mathcal J_{n,0}(S)
 =(S-1)\mathcal J_{n,0}(S-1)+e^{-2\pi n},
\]
and, when $m\ge1$,
\[
 2\pi n \mathcal J_{n,m}(S)
 =(S-1)\mathcal J_{n,m}(S-1)
  +m\mathcal J_{n,m-1}(S-1).
\]
All additional terms on the right are nonnegative, so
\eqref{eq:transition-backward-one-step} follows in both cases.  Iteration gives
\eqref{eq:transition-backward-iterated}.
\end{proof}

\begin{proposition}[Uniform global factorial tail]
\label{prop:transition-global-factorial-tail}
Let $d=k-2=2M$, $s_0=k-1$, and define
\[
 c_r=\mathcal D_{f,m}(s_0-r)\frac{(2\pi)^r}{r!}\qquad(0\le r\le M)
\]
as in the right-edge decomposition.  Let
\[
 \widetilde c_r=c_r\quad(0\le r<M),\qquad
 \widetilde c_M=\frac12c_M,
\]
so that $\widetilde c_r$ are exactly the coefficients of the edge polynomial
$q_{f,m,k}$.  For every fixed $R>0$ and every $A>0$, there are positive constants
$C_R$, $C_{R,A}$, and $K_*=K_*(R,A)$ such that, for all even weights
$k\ge K_*$, all integers $m\ge0$, all normalized Hecke eigenforms $f\in S_k(\SLZ)$,
and all integers $L$ with $0\le L\le M$, one has $c_0\ne0$ and
\begin{equation}
 \sup_{|w|\le R}\sum_{r=L}^{M}\left|\frac{\widetilde c_r}{c_0}w^r\right|
 \le C_R\sum_{r=L}^{\infty}\frac{(2\pi R)^r}{r!}+C_{R,A}k^{-A}.
 \label{eq:transition-global-factorial-tail}
\end{equation}
The factorial envelope is independent of $A$; only the super-polynomial
remainder and its constant depend on $A$.  The same estimate also holds with
$\widetilde c_r$ replaced by $c_r$, after multiplying the right-hand side by
at most $2$.  The constants are independent of $L$, $m$, and the normalized Hecke
eigenform $f\in S_k(\SLZ)$.
\end{proposition}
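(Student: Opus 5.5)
The plan is to bound each ratio $\widetilde c_r/c_0$ by the factorial envelope $(2\pi)^r/r!$ up to a loss that is at most polynomial in $k$. The polynomial loss will then be absorbed into $k^{-A}$ once $r\ge C_A\log k$, while Proposition~\ref{prop:transition-near-edge-ratio} handles the small indices with no loss. First, $c_0\ne0$ for large $k$ is exactly the nonvanishing of $\mathcal D_{f,m}(s_0)$ in Proposition~\ref{prop:transition-near-edge-ratio}. Moreover \eqref{eq:transition-Lambda-to-J} gives $|\Lambda_f^{(m)}(s_0)|\ge\tfrac12\mathcal J_m(s_0)$ uniformly in $m,f$. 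Next write
\[
 \frac{c_r}{c_0}=\frac{\Lambda_f^{(m)}(s_0-r)}{\Lambda_f^{(m)}(s_0)}\cdot\frac{G(s_0)}{G(s_0-r)}\cdot\frac{(2\pi)^r}{r!}
 =\frac{\Lambda_f^{(m)}(s_0-r)}{\Lambda_f^{(m)}(s_0)}\cdot\frac{(s_0-1)!}{(s_0-r-1)!\,r!}.
\]
The Gamma ratio will cancel exactly against the factorial produced by the backward recurrence, which is why that recurrence is the right tool.

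\emph{Upper bound for all $0\le r\le M$.} By Lemma~\ref{lem:transition-split-mellin} and \eqref{eq:transition-I-Fourier},
\[
 |\Lambda_f^{(m)}(s_0-r)|\le\sum_n|a_f(n)|\bigl(\mathcal J_{n,m}(s_0-r)+\mathcal J_{n,m}(1+r)\bigr).
\]
Since $1+r\le s_0-r$ for $r\le M$ and $y\ge1$, the reflected moment is at most the direct one, so it suffices to bound $2\sum_n|a_f(n)|\mathcal J_{n,m}(s_0-r)$. Applying \eqref{eq:transition-backward-iterated} with $S=s_0$ gives
\[
 \mathcal J_{n,m}(s_0-r)\le(2\pi n)^r\frac{(s_0-r-1)!}{(s_0-1)!}\mathcal J_{n,m}(s_0).
\]
Substituting, the factorials cancel and
\[
 \Bigl|\frac{c_r}{c_0}\Bigr|\le\frac{4(2\pi)^r}{r!}\sum_{n\ge1}|a_f(n)|\,n^r\frac{\mathcal J_{n,m}(s_0)}{\mathcal J_m(s_0)}.
\]
It remains to show that the $n$-sum is $O(k^{B})$ for an absolute $B$, uniformly in $r\le M$, $m$ and $f$.

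\emph{The main obstacle} is this $n$-sum near the centre $r\approx M$. There Deligne's bound combined with the crude estimate $\mathcal J_{n,m}(s_0)\le n^{-s_0}\mathcal J_m(s_0)$ from \eqref{eq:transition-nth-bound} gives terms of size $\tau(n)n^{r-(k-1)/2}\approx\tau(n)n^{-1/2}$, which is not summable. I would therefore keep the exponential factor. After $t=ny$,
\[
 \mathcal J_{n,m}(s_0)=n^{-s_0}\int_n^\infty e^{-2\pi t}t^{s_0}(\log(t/n))^m\,\frac{dt}{t}.
\]
For $n\le k^2$ I would use the crude bound, which contributes at most $\sum_{n\le k^2}\tau(n)n^{-1/2}\ll k\log k$. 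For $n>k^2$, the integrand lives far beyond the saddle $t\asymp k$ of $\mathcal J_m(s_0)$. Writing $e^{-2\pi t}=e^{-\pi t}e^{-\pi t}$, the factor $e^{-\pi t}\le e^{-\pi n}$ pulls out. The remaining integral is at most $2^{s_0}\mathcal J_m(s_0)$, up to harmless constants, by comparison with $\mathcal J_m$ at $2\pi$ replaced by $\pi$, using Lemma~\ref{lem:transition-model-localization}. This is super-exponentially small against $n^{(k-1)/2+r}$ once $n>k^2$, so this range contributes $O(1)$. This gives a bound $C k^{2}$ for the whole sum. I expect checking this comparison uniformly in $m$ to be the delicate point; if it resists, I would instead split at $n\asymp k$ using \eqref{eq:transition-weighted-Jn-tail}.

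\emph{Assembly.} Fix $R$ and $A$, and choose $C_A$ so large that $k^{2}\sum_{r\ge C_A\log k}(2\pi R)^r/r!\le k^{-A}$ for large $k$. For $r\ge C_A\log k$ the preceding bound puts the tail $\sum_{r\ge\max(L,C_A\log k)}|\widetilde c_r/c_0|R^r$ into $C_{R,A}k^{-A}$. For $L\le r<C_A\log k$, Proposition~\ref{prop:transition-near-edge-ratio} with $C=C_A$ gives $|c_r/c_0|=\alpha_{k,m}^r(1+O((1+r)^2/k))(2\pi)^r/r!\le2(2\pi)^r/r!$, since $\alpha_{k,m}\le1$. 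Summing gives the envelope term $C_R\sum_{r\ge L}(2\pi R)^r/r!$. Replacing $\widetilde c_M$ by $c_M$ changes only one coefficient, by a factor $2$. All constants are independent of $L$, $m$ and $f$, which yields \eqref{eq:transition-global-factorial-tail}.
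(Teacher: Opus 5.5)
\textbf{Verdict.} Your reduction is correct and takes a different route from the paper's, but the one step you flagged fails as you justify it. It has an easy repair.

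\textbf{What is right.} Three steps are correct and uniform in $m$: bounding the reflected moment by the direct one via $\mathcal J_{n,m}(1+r)\le\mathcal J_{n,m}(s_0-r)$, cancelling the Gamma ratio against \eqref{eq:transition-backward-iterated}, and reducing to $\Sigma_r:=\sum_n|a_f(n)|n^r\mathcal J_{n,m}(s_0)/\mathcal J_m(s_0)$.

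\textbf{Where it fails.} The saddle of $\mathcal J_m(s_0)$ sits at $t=e^{x_0}$ with $2\pi e^{x_0}=s_0+m/x_0$. This is $\asymp k$ only when $m\ll k\log k$. Comparing $\int_1^\infty e^{-\pi t}t^{s_0}(\log t)^m\,dt/t$ (``$\mathcal J_m$ with $2\pi$ replaced by $\pi$'') with $\mathcal J_m(s_0)$ does not give $O(2^{s_0})$. The shift $x\mapsto x+\log 2$ turns $x^m$ into $(x+\log 2)^m$. By Jensen and concentration, the ratio is therefore at least of order $2^{s_0}(1+\log 2/x_0)^m$, which is unbounded as $m\to\infty$. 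With that bound the range $k^2<n\ll m/x_0$ is not controlled. Lemma~\ref{lem:transition-model-localization} cannot rescue this, because the loss is in the size of the moment, not in its spread.

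\textbf{The repair.} Do not replace $\log(t/n)$ by $\log t$. For $t\ge n\ge2$ you have $0\le\log(t/n)\le\log(t/2)$, and $t=2u$ gives, for every $m$,
\[
 \int_n^\infty e^{-\pi t}t^{s_0}\Bigl(\log\frac tn\Bigr)^m\frac{dt}{t}
 \le 2^{s_0}\int_{n/2}^\infty e^{-2\pi u}u^{s_0}(\log u)^m\frac{du}{u}
 \le 2^{s_0}\mathcal J_m(s_0).
\]
So $n>k^2$ contributes at most $2^{s_0}\sum_{n>k^2}\tau(n)n^{-1/2}e^{-\pi n}$, and $\Sigma_r\le\Sigma_M\ll k\log k$ uniformly.

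\textbf{Your fallback.} The route via \eqref{eq:transition-weighted-Jn-tail} also works, using its $r=0$ case (same proof), but only with a cut at $n\gg k\log k$, not at $n\asymp k$. The second term $e^{-cB_0}e^{-2\pi(n-1)}$, weighted by $\tau(n)n^{(k-1)/2+M}$, has size $e^{-cB_0+k\log k+O(k)}$ near $n\asymp k$, while $B_0$ can be only about $k$.

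\textbf{Comparison with the paper.} You overestimate the obstacle. The crude bound \eqref{eq:transition-nth-bound} already gives
\[
\Sigma_r\le\sum_n\tau(n)n^{r-M-1/2}\le\sum_n\tau(n)n^{-3/2}\qquad\text{for every } r\le M-1,
\]
so the factorial envelope holds with no loss except at the single index $r=M$.
\begin{itemize}
\item The paper uses exactly this for the main terms. It therefore needs neither Proposition~\ref{prop:transition-near-edge-ratio} nor absorption of a polynomial loss.
\item It treats the central coefficient and all reflected terms together through \eqref{eq:transition-reflected-bound}, namely $|\mathcal I_{f,m}(B)|\le 2\mathcal J_m(M+2)$ for $1\le B\le M+1$. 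This comes from interchanging the $n$-sum with the $t$-integral and using $\sum_{n\le t}\tau(n)n^{-1/2}\le 2t$.
\item The backward recurrence from $2M+1$ down to $M+2$ then supplies $(2\pi)^{M-1}(M+1)!/(2M)!$. This beats $\binom{2M}{M}$ and makes those pieces $O(k^{-A})$.
\end{itemize}

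\textbf{What your route buys.} Your version replaces this with one uniform polynomial-loss estimate plus absorption. Once repaired, it is valid. Since $(2\pi)^M k\log k/M!$ is already super-polynomially small, your own bound recovers the paper's sharper conclusion directly. The detour through Proposition~\ref{prop:transition-near-edge-ratio} and the $C_A\log k$ cut is then unnecessary.
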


\begin{proof}
Write $\kappa=(k-1)/2=M+1/2$ and normalize the Fourier coefficients by
\[
 a_f(n)=\rho_f(n)n^\kappa.
\]
Deligne's bound gives $|\rho_f(n)|\le\tau(n)$, where $\tau$ denotes the
divisor function~\cite{Deligne}.  By \eqref{eq:model-moments},
$\mathcal J_m=\mathcal J_{1,m}$, and the split-Mellin transform has the Fourier
expansion \eqref{eq:transition-I-Fourier}.  The split-Mellin formula is
\begin{equation}
 \Lambda_f^{(m)}(s_0-r)=\mathcal I_{f,m}(s_0-r)+(-1)^{k/2+m}\mathcal I_{f,m}(1+r).
 \label{eq:transition-tail-split}
\end{equation}

We first isolate the denominator.  The change of variables $t=ny$ gives, for
$S\ge1$,
\[
 \mathcal J_{n,m}(S)
 =n^{-S}\int_n^\infty e^{-2\pi t}t^S(\log(t/n))^m\,\frac{dt}{t}
 \le n^{-S}\mathcal J_m(S).
\]
Hence
\[
 \mathcal I_{f,m}(s_0)=\mathcal J_m(s_0)+O\!\left(\mathcal J_m(s_0)
        \sum_{n\ge2}\tau(n)n^{\kappa-s_0}\right)
        =\mathcal J_m(s_0)(1+O(2^{-M})).
\]
The reflected Mellin term in $\Lambda_f^{(m)}(s_0)$ is $\mathcal I_{f,m}(1)$, and it is covered
by the following simple bound, which we shall also need below.  If $1\le B\le M+1$,
then $\mathcal J_{n,m}(B)\le \mathcal J_{n,m}(M+1)$; after the same change of variables,
\begin{align*}
 |\mathcal I_{f,m}(B)|
 &\le \sum_{n\ge1}\tau(n)n^\kappa\mathcal J_{n,m}(M+1) \\
 &\le \int_1^\infty e^{-2\pi t}t^{M+1}
  \sum_{n\le t}\tau(n)n^{-1/2}\bigl(\log(t/n)\bigr)^m\,\frac{dt}{t}.
\end{align*}
Using the elementary divisor estimate $\tau(n)\le2n^{1/2}$ and
$\log(t/n)\le\log t$, we obtain
\begin{equation}
 |\mathcal I_{f,m}(B)|\le 2\mathcal J_m(M+2)\qquad(1\le B\le M+1).
 \label{eq:transition-reflected-bound}
\end{equation}
By Lemma~\ref{lem:transition-backward-recurrence},
\begin{equation}
 \frac{\mathcal J_m(M+2)}{\mathcal J_m(2M+1)}
 \le (2\pi)^{M-1}\frac{(M+1)!}{(2M)!}=O_{A}(k^{-A})
 \label{eq:transition-reflected-small}
\end{equation}
for every fixed $A>0$, uniformly in $m$.  Therefore
\begin{equation}
 \Lambda_f^{(m)}(s_0)=\mathcal J_m(s_0)(1+O_{A}(k^{-A})),
 \qquad |\Lambda_f^{(m)}(s_0)|\ge \tfrac12 \mathcal J_m(s_0)
 \label{eq:transition-denominator-lower}
\end{equation}
for all sufficiently large $k$, uniformly in $m$ and $f$.  In particular,
the normalization by $c_0$ used in all subsequent large-weight arguments is
valid simultaneously for every derivative order and every normalized
level-one eigenform.

\noindent\emph{Right-edge main terms, $0\le r\le M-1$.}
The factorial normalization is exactly the one produced by the backward
recurrence, since
\[
 \binom dr(2\pi n)^r\frac{(s_0-r-1)!}{(s_0-1)!}
 =\frac{(2\pi n)^r}{r!}
 \qquad(d=s_0-1).
\]
Thus the backward recurrence and the preceding scaling bound give
\begin{align}
 \binom dr\frac{|\mathcal I_{f,m}(s_0-r)|}{\mathcal J_m(s_0)}
 &\le \binom dr\sum_{n\ge1}\tau(n)n^\kappa
       \frac{\mathcal J_{n,m}(s_0-r)}{\mathcal J_m(s_0)} \notag\\
 &\le \frac{(2\pi)^r}{r!}
       \sum_{n\ge1}\tau(n)n^{\kappa+r-s_0}
 \ll \frac{(2\pi)^r}{r!}.
 \label{eq:transition-main-factorial-bound}
\end{align}
The last sum is uniformly bounded because
$\kappa+r-s_0=r-M-1/2\le-3/2$ in this range.

\smallskip
\noindent\emph{The central coefficient, $r=M$.}
Here both Mellin parameters in \eqref{eq:transition-tail-split} are $M+1$.
By \eqref{eq:transition-reflected-bound},
\[
 \binom{2M}{M}\frac{|\mathcal I_{f,m}(M+1)|}{\mathcal J_m(2M+1)}
 \le 2\binom{2M}{M}\frac{\mathcal J_m(M+2)}{\mathcal J_m(2M+1)}.
\]
Moreover,
$|\Lambda_f^{(m)}(M+1)|\le2|\mathcal I_{f,m}(M+1)|$.  Using
\eqref{eq:transition-denominator-lower}, it follows that the normalized full
coefficient satisfies
\[
 \left|\frac{c_M}{c_0}\right|=O_A(k^{-A})
\]
for every fixed $A>0$, uniformly in $m$ and $f$.  The actual coefficient of
$w^M$ in $q_{f,m,k}$ is $\widetilde c_M=c_M/2$, which is smaller.

\smallskip
\noindent\emph{Reflected Mellin terms, $0\le r\le M$.}
These are also controlled by \eqref{eq:transition-reflected-bound}.  Since
$\binom{2M}{r}\le\binom{2M}{M}$,
\begin{align}
 \binom dr\frac{|\mathcal I_{f,m}(1+r)|}{\mathcal J_m(s_0)}
 &\le 2\binom{2M}{M}\frac{\mathcal J_m(M+2)}{\mathcal J_m(2M+1)} \notag\\
 &\ll (2\pi)^{M-1}\frac{M+1}{M!},
 \label{eq:transition-reflected-factorial-bound}
\end{align}
for every $0\le r\le M$.  After multiplication by $R^r$ and summation,
\[
 \sum_{r=0}^{M}R^r\,(2\pi)^{M-1}\frac{M+1}{M!}
 \le (1+R)^M(2\pi)^{M-1}\frac{M+1}{M!}
 =O_{R,A}(k^{-A})
\]
for every fixed $R$ and $A>0$, by Stirling's formula.  Thus all terms not
covered by the factorial main-term estimate---namely the reflected Mellin terms
and the central coefficient---contribute $O_{R,A}(k^{-A})$, uniformly in the
lower summation cutoff $L$.

Finally, for $0\le r\le M$,
\[
 \frac{c_r}{c_0}w^r
 =\frac{\Lambda_f^{(m)}(s_0-r)}{\Lambda_f^{(m)}(s_0)}
  \binom dr w^r.
\]
The displayed identity is used with $c_M$; replacing $c_M$ by
$\widetilde c_M=c_M/2$ only decreases the left hand side of the proposition.
Combining
\eqref{eq:transition-tail-split}, \eqref{eq:transition-denominator-lower},
\eqref{eq:transition-main-factorial-bound}, and
\eqref{eq:transition-reflected-factorial-bound}, and summing the main-term
majorant only over $r\ge L$, gives \eqref{eq:transition-global-factorial-tail}
with constants independent of $L$.
\end{proof}

\begin{proposition}[Quantitative moving-edge approximation]
\label{prop:transition-moving-edge-quantitative}
For every fixed $R>0$, there exists $K_R$ such that, for every even
$k\ge K_R$, every integer $m\ge0$, and every normalized level-one Hecke
eigenform $f\in S_k(\SLZ)$, one has $c_0\ne0$, and
\begin{equation}
 \sup_{|w|\le R}
 \left|\frac{q_{f,m,k}(w)}{c_0}-e^{2\pi\alpha_{k,m}w}\right|
 \ll_R \frac1k.
 \label{eq:moving-edge-quantitative}
\end{equation}
Consequently,
\begin{equation}
 \sup_{|z|\le R}
 \left|p_{f,m,k}(z)-\sin(2\pi\alpha_{k,m}z)\right|
 \ll_R \frac1k.
 \label{eq:moving-p-quantitative}
\end{equation}
Both estimates are uniform in integers $m\ge0$ and in the normalized level-one
Hecke eigenform $f$.
\end{proposition}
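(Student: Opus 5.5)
The plan is to split $q_{f,m,k}(w)/c_0=\sum_{r=0}^{M}(\widetilde c_r/c_0)w^r$ at a cutoff $L=\lfloor C\log k\rfloor$, with $C$ chosen large, and to compare it termwise with $e^{2\pi\alpha_{k,m}w}=\sum_{r\ge0}(2\pi\alpha_{k,m})^rw^r/r!$. By Proposition~\ref{prop:transition-global-factorial-tail} (applied with $A=1$), $c_0\neq0$ for large $k$, uniformly in $m$ and $f$. For $|w|\le R$ the error will be bounded by three pieces: a head $\sum_{r<L}$, a model tail $\sum_{r\ge L}(2\pi R)^r/r!$, and an edge tail $\sum_{r=L}^{M}|\widetilde c_r/c_0|R^r$.

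\emph{Head.} For $r<L\le M-1$ (valid for large $k$) we have $\widetilde c_r=c_r$, and by \eqref{eq:cr-definition}
\[
 \frac{c_r}{c_0}=\frac{\mathcal D_{f,m}(s_0-r)}{\mathcal D_{f,m}(s_0)}\frac{(2\pi)^r}{r!}
 =\frac{(2\pi\alpha_{k,m})^r}{r!}\Bigl(1+O_C\bigl((1+r)^2/k\bigr)\Bigr)
\]
by Proposition~\ref{prop:transition-near-edge-ratio}. Since $\alpha_{k,m}\le1$, the head error is at most $\ll_C k^{-1}\sum_r(1+r)^2(2\pi R)^r/r!\ll_R k^{-1}$. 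The point is that the $O$-constant depends only on $C$, and $C$ will be fixed depending on $R$ only.

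\emph{Tails.} The model tail is at most $\sum_{r\ge L}(2\pi R)^r/r!\le (2\pi eR/L)^L$, which is superpolynomially small in $k$ once $L\asymp C\log k$ with any fixed $C>0$. It is therefore $O_R(k^{-1})$ for large $k$. The edge tail is controlled by \eqref{eq:transition-global-factorial-tail} with this $L$ and $A=1$: it is at most $C_R\sum_{r\ge L}(2\pi R)^r/r!+C_{R,1}k^{-1}\ll_R k^{-1}$. Combining the three pieces gives \eqref{eq:moving-edge-quantitative} with $K_R$ large enough that all of the above hold. This step is the only place where the global factorial envelope is needed, since the local ratios cover only $O(\log k)$ coefficients; the main care is to check that the constants stay uniform in $m$, which both propositions already guarantee.

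\emph{Sine model.} From \eqref{eq:odd-edge-p},
\[
 p_{f,m,k}(z)=\frac{1}{2i}\left(\frac{q(iz)}{c_0}-\frac{q(-iz)}{c_0}\right).
\]
Replacing each quotient by $e^{\pm2\pi i\alpha_{k,m}z}$ yields exactly $\sin(2\pi\alpha_{k,m}z)$. For $|z|\le R$ we have $|\pm iz|\le R$, so each replacement costs $\ll_R k^{-1}$ by \eqref{eq:moving-edge-quantitative}, and this proves \eqref{eq:moving-p-quantitative}.
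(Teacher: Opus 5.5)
Your proposal is correct and follows the paper's proof essentially step for step. You split at a logarithmic cutoff, use Proposition~\ref{prop:transition-near-edge-ratio} for the head, bound the model tail directly, and control the edge tail with Proposition~\ref{prop:transition-global-factorial-tail}; you take $A=1$ where the paper takes $A=2$, which is immaterial, and you rightly note that any fixed $C>0$ in the cutoff suffices. The sine estimate then follows by taking the odd part at $w=\pm iz$, as in the paper.
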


\begin{proof}
Choose $B=B(R)$ so large that, with $L=\lfloor B\log k\rfloor$, the tail of the
exponential series over $r\ge L+1$ is $O_R(k^{-2})$ uniformly for $|w|\le R$ and
$0<\alpha_{k,m}\le1$, as recorded in \eqref{eq:transition-alpha}.
For all sufficiently large $k$ one has $L+1\le M$.  Proposition~\ref{prop:transition-global-factorial-tail}, applied with $A=2$ and lower cutoff $L+1$,
gives the same bound, up to $O_R(k^{-2})$, for the tail of $q_{f,m,k}/c_0$.  For $0\le r\le L$,
Proposition~\ref{prop:transition-near-edge-ratio} gives
\[
 \frac{c_r}{c_0}
 =\left(\alpha_{k,m}\right)^r\frac{(2\pi)^r}{r!}
 \left(1+O_R\left(\frac{(1+r)^2}{k}\right)\right).
\]
Summing these coefficient errors with the exponential weights gives
\[
 \sum_{0\le r\le L}O_R\left(\frac{(1+r)^2}{k}\right)\frac{(2\pi R)^r}{r!}=O_R(k^{-1}),
\]
because the full weighted second moment of the exponential series is bounded
in terms of $R$ alone.  This proves \eqref{eq:moving-edge-quantitative}.
Applying the definition \eqref{eq:odd-edge-p} at $w=iz$ and $w=-iz$ gives
\eqref{eq:moving-p-quantitative}.
\end{proof}

\section{Separated and critical zero phases}
\label{sec:moving-phases}

The quantitative edge estimate reduces the zero problem to the model
$\sin(2\pi\alpha z)$.  The boundary-margin estimate singles out
$\alpha=0$, $1/2$, and $1$ as the only degenerate values.  This section treats
the separated ranges and the collision at $\alpha=1/2$; Sections~\ref{sec:lower-endpoint}
and~\ref{sec:upper-endpoint} treat the two endpoints.

\subsection{The phase parameter and the critical order}

\begin{lemma}[Monotonicity and the critical order]
\label{lem:alpha-monotone-m}
Let $s=k-1$ and assume $s>2\pi$.  For real $m\ge0$, let $x_0=x_0(k,m)$ be the
positive solution of
\[
 2\pi e^{x_0}=s+\frac{m}{x_0},
\]
and set
\[
 \alpha_{k,m}=\frac{s}{2\pi e^{x_0}}.
\]
Then $m\mapsto\alpha_{k,m}$ is strictly decreasing.  Moreover, with
\[
 m_c(k):=s\log\frac{s}{\pi}=(k-1)\log\frac{k-1}{\pi},
\]
one has
\[
\begin{aligned}
 \alpha_{k,m}>\frac12 &\iff m<m_c(k),\\
 \alpha_{k,m}=\frac12 &\iff m=m_c(k),\\
 \alpha_{k,m}<\frac12 &\iff m>m_c(k).
\end{aligned}
\]
\end{lemma}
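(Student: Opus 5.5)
The plan is to view $x_0$ as a function of $m$ through the saddle equation, show it is increasing, deduce that $\alpha_{k,m}$ is decreasing, and then solve the equation $\alpha_{k,m}=1/2$ explicitly.

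\emph{Monotonicity.} Set $F(x,m)=2\pi e^{x}-s-m/x$ for $x>0$. We have
\[
 \partial_xF=2\pi e^x+\frac{m}{x^2}>0,\qquad \partial_mF=-\frac1x<0.
\]
Since $F(x,m)\to-\infty$ as $x\downarrow0$ when $m>0$, and $F(\log(s/2\pi),0)=0$ with $s>2\pi$, there is a unique positive root $x_0(k,m)$. By the implicit function theorem it is $C^1$ in $m$, with
\[
 \frac{dx_0}{dm}=\frac{1/x_0}{2\pi e^{x_0}+m/x_0^2}>0 .
\]
Hence $x_0$ is strictly increasing in $m$. Because $\alpha_{k,m}=s e^{-x_0}/(2\pi)$, it follows that $\alpha_{k,m}$ is strictly decreasing in $m$.

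\emph{Locating the crossing.} Next I solve $\alpha_{k,m}=1/2$. This holds exactly when $2\pi e^{x_0}=2s$, that is, when $x_0=\log(s/\pi)$. Note that $\log(s/\pi)>\log 2>0$, so this value is admissible. Substituting into the saddle equation gives
\[
 2s=s+\frac{m}{\log(s/\pi)},
\]
so $m=s\log(s/\pi)=m_c(k)$. Conversely, if $m=m_c(k)$, then $x=\log(s/\pi)$ satisfies the saddle equation, and uniqueness of the root gives $x_0=\log(s/\pi)$, hence $\alpha_{k,m}=1/2$.

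\emph{The three equivalences.} Since $m\mapsto\alpha_{k,m}$ is strictly decreasing and takes the value $1/2$ exactly at $m_c(k)$, the three stated equivalences follow at once.

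There is no real obstacle. The only care needed is in the existence and uniqueness of the positive root at $m=0$, which is settled by $s>2\pi$, and in noting that $m_c(k)>0$ under the same hypothesis. Alternatively, one can avoid the implicit function theorem: if $m<m'$ then $F(x,m')<F(x,m)$ for every $x>0$, and since $F(\cdot,m')$ is increasing, its root must lie to the right of $x_0(k,m)$.
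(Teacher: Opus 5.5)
Your proof is correct and follows essentially the paper's route: first show that $x_0$ increases strictly with $m$, so $\alpha_{k,m}$ decreases; then solve $2\pi e^{x_0}=2s$ to find that the crossing is at $m=s\log(s/\pi)$. The only cosmetic difference is in the monotonicity step: the paper inverts the saddle relation as $m=x(2\pi e^x-s)$ on $x\ge\log(s/(2\pi))$ and checks that its $x$-derivative is at least $sx>0$, whereas you differentiate implicitly (or compare roots), which gives the same monotonicity.
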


\begin{proof}
Parametrize the saddle equation by $x$ rather than by $m$:
\[
 m=x(2\pi e^x-s),\qquad x\ge \log\frac{s}{2\pi}.
\]
Since $s>2\pi$, this interval lies in $x>0$, and
\[
 \frac{d}{dx}\{x(2\pi e^x-s)\}=2\pi e^x(1+x)-s\ge sx>0
\]
throughout this interval.  Thus $x_0$ is strictly increasing with $m$, and
\[
 \alpha_{k,m}=\frac{s}{2\pi e^{x_0}}
\]
is strictly decreasing.  Finally, $\alpha_{k,m}=1/2$ is equivalent to
$2\pi e^{x_0}=2s$, hence $x_0=\log(s/\pi)$; substituting into
$m=x_0(2\pi e^{x_0}-s)$ gives $m=s\log(s/\pi)=m_c(k)$.  Monotonicity gives the
two strict inequalities.
\end{proof}

\begin{lemma}[Width of the critical collar]
\label{lem:critical-collar-width}
Let $k\to\infty$ through even weights, let $m=m(k)\in\Z_{\ge0}$,
put $s=k-1$, and set
\[
 x_c=\log\frac{s}{\pi},\qquad m_c(k)=s x_c,
 \qquad \Delta_k=m-m_c(k).
\]
If $\Delta_k/\log k\to+\infty$, then
\[
 \alpha_{k,m}<\frac12\quad\text{eventually},
 \qquad k\left(\frac12-\alpha_{k,m}\right)\to+\infty.
\]
If $-\Delta_k/\log k\to+\infty$, then
\[
 \alpha_{k,m}>\frac12\quad\text{eventually},
 \qquad k\left(\alpha_{k,m}-\frac12\right)\to+\infty.
\]
More precisely, in the near-critical subrange $|\Delta_k|=o(k\log k)$ one has
\begin{equation}
 \alpha_{k,m}-\frac12
 =-\frac{\Delta_k}{2s(1+2x_c)}
  +O\left(\frac{\Delta_k^2}{s^2x_c^2}\right).
 \label{eq:alpha-critical-linearization}
\end{equation}
\end{lemma}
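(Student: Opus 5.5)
The plan is to work with the parametrization already used in Lemma~\ref{lem:alpha-monotone-m}, namely $m=\mu(x):=x(2\pi e^x-s)$ for $x\ge\log(s/(2\pi))$, so that $\alpha_{k,m}=s/(2\pi e^{x_0})$ with $x_0=\mu^{-1}(m)$. At the critical point $x_c=\log(s/\pi)$ we have $2\pi e^{x_c}=2s$ and $\mu(x_c)=sx_c=m_c(k)$. I would linearize $\mu$ at $x_c$:
\[
 \mu'(x_c)=2\pi e^{x_c}(1+x_c)-s=s(1+2x_c),
 \qquad
 \mu''(x)=2\pi e^x(2+x).
\]
Writing $x_0=x_c+u$, the equation $\mu(x_c+u)=m_c(k)+\Delta_k$ becomes $s(1+2x_c)u+O(s x_c u^2 e^{|u|})=\Delta_k$.

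First I would show $u=O(|\Delta_k|/(sx_c))$ whenever $|\Delta_k|=o(k\log k)$. Since $\mu$ is increasing and convex on the relevant range, with $\mu'\ge s(1+2x_c)$ for $x\ge x_c$, we get $|u|\le|\Delta_k|/(s(1+2x_c))$ on the side $u\ge0$. On the side $u<0$ one needs $\mu'(x)\gg sx_c$ for $x\in[x_c-1,x_c]$. This holds because $\mu'(x)=2\pi e^x(1+x)-s\ge s(1+x)e^{-1}\cdot 2-s\gg sx$ for large $x$. Also $|u|\le 1$ eventually, because $\mu(x_c)-\mu(x_c-1)\gg sx_c$ while $|\Delta_k|=o(sx_c)$. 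Then the Taylor remainder is $O(sx_cu^2)$, and solving gives
\[
 u=\frac{\Delta_k}{s(1+2x_c)}+O\!\left(\frac{\Delta_k^2}{s^2x_c^2}\right).
\]
Next, $\alpha_{k,m}=\tfrac12 e^{-u}=\tfrac12-\tfrac u2+O(u^2)$. Substituting yields \eqref{eq:alpha-critical-linearization}, since $u^2=O(\Delta_k^2/(s^2x_c^2))$.

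For the two one-sided statements I would use the sign equivalences of Lemma~\ref{lem:alpha-monotone-m}, which give $\alpha_{k,m}<\tfrac12$ exactly when $\Delta_k>0$. For the quantitative divergence I would split into cases. If $|\Delta_k|=o(k\log k)$, the linearization gives
\[
 k\left|\alpha_{k,m}-\tfrac12\right|\asymp\frac{|\Delta_k|}{x_c}\cdot(1+o(1)),
\]
which tends to infinity because $|\Delta_k|/\log k\to\infty$ and $x_c\asymp\log k$. The error term is relatively small because $|\Delta_k|/(sx_c)\to0$. If $|\Delta_k|$ is not $o(k\log k)$, I would pass to subsequences and use monotonicity. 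Choose $m'$ between $m_c$ and $m$ with $|m'-m_c|=\varepsilon k\log k$. Monotonicity of $\alpha$ in $m$ (interpolating in real $m$, where the linearization remains valid) gives $|\alpha_{k,m}-\tfrac12|\ge|\alpha_{k,m'}-\tfrac12|\gg\varepsilon$, so $k|\alpha_{k,m}-\tfrac12|\to\infty$ trivially.

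The main obstacle is only bookkeeping: making the Taylor control uniform on the negative side of $x_c$ and justifying the monotone comparison for large $|\Delta_k|$. Everything else is a direct computation.
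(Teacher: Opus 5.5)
Your proposal is correct and follows essentially the same route as the paper: the parametrization $m=x(2\pi e^x-s)$, convexity to control $u=x_0-x_c$, the Taylor expansion with $\mathscr M_s'(x_c)=s(1+2x_c)$ and $\mathscr M_s''=O(sx_c)$, the identity $\alpha_{k,m}=\tfrac12e^{-u}$, and a subsequence split with a monotone comparison in the macroscopic case. The only difference is in that macroscopic comparison. The paper evaluates the comparison point $(1\pm\xi)sx_c$ directly via $x_\pm/x_c\to1$. You instead reuse the linearization at distance $\varepsilon k\log k$. This is legitimate because your bound $|u|\ll|\Delta|/(sx_c)$ holds unconditionally, using $x_0\ge x_c-\log2$ on the negative side, so the expansion holds with uniform constants for $|\Delta|\le\varepsilon sx_c$ once $\varepsilon$ is fixed and small.
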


The proof is deferred to Section~\ref{sec:derivative-axis-consequences}, where
the lemma is used to resolve the logarithmic collision window.

\subsection{Separated phases away from the collision}

Whenever the sine boundary margin dominates the $O(k^{-1})$ edge error,
ordinary Rouch\'e comparison determines the disk zeros of the right-edge odd
polynomial.  We first treat the two phases separated from the collision; the
endpoint losses are isolated in Sections~\ref{sec:lower-endpoint}
and~\ref{sec:upper-endpoint}.

\begin{lemma}[Sine boundary margin]
\label{lem:sine-boundary-margin}
There are absolute constants $c,C>0$ such that, for every $0<\alpha<1$,
\[
 c\min\{\alpha,\ |2\alpha-1|,\ 1-\alpha\}
 \le \min_{|z|=1}|\sin(2\pi\alpha z)|
 \le C\min\{\alpha,\ |2\alpha-1|,\ 1-\alpha\}.
\]
\end{lemma}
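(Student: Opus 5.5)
We need to show that $\min_{|z|=1}|\sin(2\pi\alpha z)|$ is comparable to $\mu(\alpha):=\min\{\alpha,|2\alpha-1|,1-\alpha\}$ for $0<\alpha<1$. The plan is to write $z=e^{i\vartheta}$ and $w=2\pi\alpha z=u+iv$, and use the exact identity
\[
 |\sin w|^2=\sin^2u+\sinh^2 v .
\]
Here $u=2\pi\alpha\cos\vartheta$ and $v=2\pi\alpha\sin\vartheta$. The image circle has radius $\rho=2\pi\alpha\in(0,2\pi)$. The zeros of $\sin$ in the closed disk of radius $2\pi$ are $0$, $\pm\pi$ and $\pm2\pi$. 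The distances from the circle $|w|=\rho$ to these zeros are $\rho$, $|\rho-\pi|$ and $2\pi-\rho$, which equal $2\pi\alpha$, $\pi|2\alpha-1|$ and $2\pi(1-\alpha)$. So the claim reduces to showing that $|\sin w|$ on $|w|=\rho$ is comparable to the distance from the circle to the zero set $\pi\Z$.

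\textbf{Upper bound.} Evaluate at the real point $\vartheta=0$, so $w=\rho$ is real. Then $|\sin\rho|\le \operatorname{dist}(\rho,\pi\Z)$, because $|\sin x|\le|x-n\pi|$ for every integer $n$. For $\rho\in(0,2\pi)$ this distance equals $\min\{\rho,|\rho-\pi|,2\pi-\rho\}$. That is at most $2\pi\,\mu(\alpha)$, so the upper bound holds with $C=2\pi$.

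\textbf{Lower bound.} Use the standard estimate
\[
 |\sin w|\ge c_0\min\{1,\operatorname{dist}(w,\pi\Z)\},
\]
valid for all $w\in\C$. To prove it, note that if $|v|\ge1$ then $|\sin w|\ge\sinh 1$. If $|v|\le1$, then $|\sin w|^2\ge \sin^2u+v^2\ge (4/\pi^2)\operatorname{dist}(u,\pi\Z)^2+v^2\gtrsim\operatorname{dist}(w,\pi\Z)^2$, up to the cap at $1$. For $|w|=\rho<2\pi$, the nearest point of $\pi\Z$ lies in $\{0,\pm\pi,\pm2\pi\}$, since points at modulus $\ge3\pi$ are farther than $2\pi-\rho$. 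The distance from $w$ to a point $n\pi$ is at least $\bigl||w|-|n|\pi\bigr|$. Hence $\operatorname{dist}(w,\pi\Z)\ge\min\{\rho,|\rho-\pi|,2\pi-\rho\}\ge\pi\mu(\alpha)$. Since $\mu(\alpha)\le1/3$, we have $\pi\mu(\alpha)\le\pi/3$, so the cap at $1$ costs only a constant factor. This gives $|\sin w|\ge c\,\mu(\alpha)$ uniformly on the circle.

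\textbf{Main point of care.} The only delicate step is the uniform comparison $|\sin w|\gtrsim\operatorname{dist}(w,\pi\Z)$ near the real axis. It uses $|\sin u|\ge(2/\pi)\operatorname{dist}(u,\pi\Z)$ together with $\sinh|v|\ge|v|$. Everything else is elementary geometry of a circle centred at the origin.
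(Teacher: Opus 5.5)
Your proof is correct and follows the same reduction as the paper: compare $|\sin w|$ with $\operatorname{dist}(w,\pi\Z)$, then compute the distance from the circle $|w|=2\pi\alpha$ to $\{0,\pm\pi,\pm2\pi\}$. The only difference is how you get $|\sin w|\asymp\operatorname{dist}(w,\pi\Z)$: the paper uses compactness of $|w|\le2\pi$ and simplicity of the zeros, whereas you use $|\sin w|^2=\sin^2u+\sinh^2v$ with Jordan's inequality for the lower bound and evaluation at $z=1$ for the upper bound, which yields explicit constants such as $C=2\pi$.
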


\begin{proof}
On the compact disk $|w|\le2\pi$, the zeros of $\sin w$ are simple.  Hence
\[
 \frac{|\sin w|}{\operatorname{dist}(w,\pi\Z)}
\]
extends across those zeros to a positive continuous function and is bounded
above and below by positive absolute constants.  Here
$\operatorname{dist}$ denotes Euclidean distance.  If $|z|=1$, then
$w=2\pi\alpha z$ runs over the circle of radius $2\pi\alpha$.  Since
$0<2\pi\alpha<2\pi$, the only relevant radii of points in $\pi\Z$ are
$0$, $\pi$, and $2\pi$, and therefore
\[
 \min_{|z|=1}\operatorname{dist}(2\pi\alpha z,\pi\Z)
 =\pi\min\{2\alpha,\ |2\alpha-1|,\ 2(1-\alpha)\}.
\]
This quantity is comparable, with absolute constants, to
$\min\{\alpha,|2\alpha-1|,1-\alpha\}$, which proves both bounds.
\end{proof}

\begin{remark}[Degenerate regimes of the sine comparison]
\label{rem:organizing-boundary-margin}
Combining Proposition~\ref{prop:transition-moving-edge-quantitative} with
Lemma~\ref{lem:sine-boundary-margin} gives the ordinary Rouch\'e argument
whenever
\begin{equation}
 k\min\{\alpha_{k,m},\ |2\alpha_{k,m}-1|,\ 1-\alpha_{k,m}\}
 \longrightarrow\infty.
 \label{eq:zeroth-order-rouche-criterion}
\end{equation}
Failure of this condition can occur only as $\alpha_{k,m}\to0$, $1/2$, or
$1$.  At the lower endpoint, a relative weighted estimate is needed because
the sine model itself is small.  At the critical value, a boundary-sensitive
count is needed because model zeros meet $\T$.  At the upper endpoint, a
first-order expansion is needed because the relevant displacement is no larger
than the absolute error.  The critical regime is treated below; the two
endpoints are treated in Sections~\ref{sec:lower-endpoint}
and~\ref{sec:upper-endpoint}.
\end{remark}

\newpage
\begin{theorem}[Separated moving phase]
\label{thm:separated-moving-phase}
Let $k\to\infty$ through even weights and let $m=m(k)\in\Z_{\ge0}$.  Put
$\alpha_k:=\alpha_{k,m(k)}$ and $E_k:=k^{-1}$.  The following assertions hold uniformly over
normalized level-one Hecke eigenforms.

If $0<\alpha_k<1/2$ and
\begin{equation}
 E_k=o\!\left(\min\{\alpha_k,\,1/2-\alpha_k\}\right),
 \label{eq:separated-lower-phase-condition}
\end{equation}
then, for all sufficiently large $k$, the origin is a simple zero of
$Q_{f,m}^{-}$ and every nonzero zero of $Q_{f,m}^{-}$ is simple and lies on
$\T$.

If $1/2<\alpha_k<1$ and
\begin{equation}
 E_k=o\!\left(\min\{\alpha_k-1/2,\,1-\alpha_k\}\right),
 \label{eq:separated-upper-phase-condition}
\end{equation}
then, for all sufficiently large $k$, $Q_{f,m}^{-}$ has a simple zero at $0$ and
four simple zeros
\[
 \pm b_{f,m,k},\qquad \pm b_{f,m,k}^{-1},
\]
forming a single real reciprocal quartet, where $0<b_{f,m,k}<1$, and
\begin{equation}
 b_{f,m,k}=\frac1{2\alpha_k}+O(E_k),
 \qquad
 b_{f,m,k}^{-1}=2\alpha_k+O(E_k),
 \label{eq:separated-quartet-location}
\end{equation}
and all remaining zeros are simple and lie on $\T$.
\end{theorem}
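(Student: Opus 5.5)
The plan is to transfer zero information from the sine model to $p_{f,m,k}$ by Rouch\'e on $\T$, and then to $P_{f,m,k}$ by the winding lemmas. We work with $R=2$ in Proposition~\ref{prop:transition-moving-edge-quantitative}. For large $k$ it gives $c_0\ne0$ and $\sup_{|z|\le 2}|p_{f,m,k}(z)-\sin(2\pi\alpha_kz)|\ll E_k$. By Lemma~\ref{lem:odd-part-completion}, $Q^-_{f,m}=C_{f,m,k}P_{f,m,k}$ with $C_{f,m,k}\ne0$, so it suffices to study $P_{f,m,k}$, which has completion parameter $N=2M=k-2$.

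\emph{Rouch\'e on the unit circle.} Under either separation hypothesis, Lemma~\ref{lem:sine-boundary-margin} gives $\min_{|z|=1}|\sin(2\pi\alpha_kz)|\gg\min\{\alpha_k,|2\alpha_k-1|,1-\alpha_k\}$, and this dominates $E_k$. Hence $p=p_{f,m,k}$ has no zeros on $\T$, and it has the same number of zeros in $\D$ as the model.

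\begin{itemize}
\item If $\alpha_k<1/2$, the model has only the simple zero $0$ in $\D$, so $r=1$.
\item If $1/2<\alpha_k<1$, the model zeros in $\D$ are $0$ and $\pm1/(2\alpha_k)$, so $r=3$.
\end{itemize}

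In the lower phase, Lemma~\ref{lem:disk-zero-circle-count} gives at least $2M-2$ distinct zeros of $P$ on $\T$. To apply Lemma~\ref{lem:odd-part-completion} we need $p'(0)\ne0$. This follows from Cauchy's estimate, since $p'(0)=2\pi\alpha_k+O(E_k)$ and $\alpha_k\gg$ the margin, which in turn $\gg E_k$. So $\deg P=2M-1$ and the origin is a simple zero. Counting, $1+(2M-2)=2M-1$ exhausts the degree. Therefore the $2M-2$ circle zeros are simple and no other zeros exist.

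In the upper phase, Lemma~\ref{lem:disk-zero-circle-count} yields $2M-6$ distinct circle zeros, plus the simple zero at $0$, leaving four zeros unaccounted for.

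\emph{Locating the quartet.} To place the remaining four zeros of $P$, we first localize the real zeros of $p$. Apply Rouch\'e on small disks of radius $\asymp E_k/\min\{\alpha_k-1/2,1-\alpha_k\}$ around $\pm1/(2\alpha_k)$, where the sine has derivative of size $\asymp1$. This produces real simple zeros $\pm b$ of $p$, with $b=1/(2\alpha_k)+O(E_k/\text{margin})$. Reality follows from uniqueness together with conjugation symmetry, and $b<1$ because the margin dominates. To sharpen the error to $O(E_k)$, one uses that the sine derivative at $1/(2\alpha_k)$ is $-2\pi\alpha_k$, bounded away from $0$, so the displacement is $O(E_k)$ directly.

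We then need to pass from zeros of $p$ to zeros of $P$. Write $P(z)=p(z)+(-1)^mz^{2M}p(1/z)$. For $|z|\le\rho<1$ with $\rho$ fixed, we bound $|z^{2M}p(1/z)|$. This uses the factorial envelope: Proposition~\ref{prop:transition-global-factorial-tail} bounds the top coefficients of $p$ by $\ll(2\pi)^r/r!+k^{-A}$. Hence the reflected term is at most $\rho^{2M}\sum_r|a_r|\rho^{-r}\le\rho^{M}\cdot O(1)$ plus $k^{-A}$ terms, which is exponentially small. Consequently $P$ also has real zeros $\pm b'$ with $b'=b+O(e^{-ck})$, these being the only zeros of $P$ in $|z|\le\rho$ besides $0$. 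Choose $\rho$ between $1/(2\alpha_k)$ and $1$. If $\alpha_k\to1/2$ slowly, $\rho$ must depend on $k$; then $\rho^M$ is still tiny as long as $1-\rho\gg$ margin $\gg 1/k$, up to logarithmic factors. This is the step needing care.

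Reciprocity \eqref{eq:P-reciprocity} then supplies $\pm1/b'$. Together with $0$ and the $2M-6$ circle zeros, this gives $2M-1$ zeros, which exhausts the degree. All zeros are therefore simple, and $Q^-_{f,m}$ has the stated structure.

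\emph{Main obstacle.} I expect the main obstacle to be this last localization when $1-b$ is small, i.e.\ $\alpha_k$ near $1/2$ with margin only slightly above $1/k$. There the $\rho^M$ bound on the reflected term is marginal. If it fails, I would instead argue by counting alone. The $2M-6$ circle zeros together with $0$ leave exactly four zeros, and these form a set closed under negation, conjugation and inversion. If they were off $\T$ and not real, they would form an orbit of eight, which is impossible. They cannot lie on $\T$ either: $P/p$ would then need extra circle zeros, but the argument-principle count in the proof of Lemma~\ref{lem:disk-zero-circle-count} is exact when the lift crosses each lattice level once. Near the real axis, the sign change of $P$ on $(0,1)$ (between $\rho$ and $1$) forces a real zero $b'\in(0,1)$, which gives the quartet. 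The location $b'=1/(2\alpha_k)+O(E_k)$ then follows from the intermediate value theorem applied to $p$, with the reflected term bounded on the segment $[0,b+O(E_k)]$.
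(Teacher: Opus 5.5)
Your architecture is the paper's: Rouch\'e against $\sin(2\pi\alpha_kz)$ on $\T$ to get $r=1$ or $r=3$, then Lemma~\ref{lem:disk-zero-circle-count} for the $N-2$ or $N-6$ distinct circle zeros. In the upper phase one adds a second Rouch\'e argument on disks of radius $CE_k$ about $\pm(2\alpha_k)^{-1}$, which needs the reflected term $z^Np(1/z)$ to be $o(E_k)$ there. The lower phase is fine.

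\textbf{The gap.} The gap is the step you flagged, and it is not a matter of logarithmic factors. The hypothesis \eqref{eq:separated-upper-phase-condition} only gives $x_k:=N(\alpha_k-\tfrac12)\to\infty$, possibly as slowly as $\log\log k$. On those disks $|z|^N$ is only of size $e^{-cx_k}$, and the factorial envelope only gives $p(1/z)=O(1)$. Your bound on the reflected term can therefore exceed the Rouch\'e margin $\asymp CE_k$, and even exceed $\alpha_k-\tfrac12$. So neither the transfer of the inner zeros to $P$ nor the location \eqref{eq:separated-quartet-location} is proved in that range.

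\textbf{The missing observation.} The point is that $p(1/z)$ is itself small there. For $z$ in either disk, $1/z=\pm2\alpha_k+O(E_k)$. Moreover $2\alpha_k$ lies within $O(\alpha_k-\tfrac12)$ of the model zero $(2\alpha_k)^{-1}$, that is, $\sin(4\pi\alpha_k^2)=O(\alpha_k-\tfrac12)$. Hence \eqref{eq:moving-p-quantitative} gives $p(1/z)=O(\alpha_k-\tfrac12)$. Using also $|z|\le e^{-c(\alpha_k-1/2)}$ on the disks,
\[
 |z|^N|p(1/z)|\ll\bigl(\alpha_k-\tfrac12\bigr)e^{-cx_k}=\frac{x_ke^{-cx_k}}{N}=o(E_k),
\]
however slowly $x_k\to\infty$. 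The paper uses this bound in the subrange $0<\alpha_k-\tfrac12<\gamma$, and your fixed-$\rho$ bound when $\alpha_k-\tfrac12\ge\gamma$.

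\textbf{The counting fallback.} It does not repair this, for three reasons.
\begin{enumerate}
\item A nonreal off-circle zero need not generate eight zeros. For $\zeta=iy$ with $y\ne1$, the orbit under negation, conjugation and inversion is $\{\pm iy,\pm i/y\}$, which fits the four-zero budget. This is exactly why the paper first localizes near $\pm1$ in Theorem~\ref{thm:critical-window-counting}.
\item Lemma~\ref{lem:disk-zero-circle-count} is only a lower bound, so the asserted ``exactness'' of the winding count has no basis.
\item Deciding the sign of $P$ at a point $\rho\in((2\alpha_k)^{-1},1)$ needs the reflected term to be smaller than $|p(\rho)|\ll\alpha_k-\tfrac12$, which your bound does not give.
\end{enumerate}

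\textbf{What can be salvaged.} A real zero of $P$ in $(0,1)$ can in fact be produced without any reflected-term estimate. Start from $P'(0)=p'(0)>0$. For even $m$, use $P(1)=2p(1)<0$. For odd $m$, use $P(1)=0$ and $P'(1)=2p'(1)-Np(1)>0$, which holds because $N|p(1)|\to\infty$. Together with the origin and the $N-6$ distinct circle zeros, this already yields the full zero pattern.

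The location \eqref{eq:separated-quartet-location} is still not reached this way. It requires the sign of $P$ at $(2\alpha_k)^{-1}\pm CE_k$, i.e.\ the reflected term being $o(E_k)$ there, which is precisely the missing estimate above.
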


\begin{proof}
Put $N=2M$.  By Proposition~\ref{prop:transition-moving-edge-quantitative},
$c_0\ne0$ for all sufficiently large $k$.  Hence the polynomials
$p_{f,m,k}$ and $P_{f,m,k}$ in
\eqref{eq:odd-edge-p}--\eqref{eq:normalized-odd-completion} are defined; write
$p=p_{f,m,k}$ and $P=P_{f,m,k}$ for the duration of the proof.  The sine
estimate \eqref{eq:moving-p-quantitative} from the same proposition holds for
each fixed $R$, and in particular for $R=3$ below, with
$\alpha_{k,m}=\alpha_k$ and $k^{-1}=E_k$.
Lemma~\ref{lem:sine-boundary-margin} gives the required lower bound for the sine
model on $\T$ in both separated ranges.

\smallskip
\noindent\emph{The lower separated phase.}
Under~\eqref{eq:separated-lower-phase-condition}, Rouch\'e's theorem on $\T$
therefore shows that $p$ has exactly the same number of zeros in $\D$ as
$\sin(2\pi\alpha_k z)$, namely the simple zero at $0$, and no zero on $\T$.
Cauchy's estimate applied to~\eqref{eq:moving-p-quantitative} on a fixed
circle around the origin gives
$p'(0)=2\pi\alpha_k+O(E_k)\ne0$.  The completion $P$ is,
by Lemma~\ref{lem:odd-part-completion}, an odd polynomial of degree $N-1$
with a simple zero at the origin.  It then satisfies
Lemma~\ref{lem:disk-zero-circle-count} with $r=1$.  The origin together with
the at least $N-2$ distinct unit-circle zeros accounts for the full degree.
Hence every nonzero zero lies on $\T$, and all nonzero zeros are simple.

\smallskip
\noindent\emph{The upper separated phase.}
Under~\eqref{eq:separated-upper-phase-condition}, Rouch\'e's theorem on $\T$
shows that $p$ has exactly three zeros in $\D$ and none on $\T$.  Let
$\beta_k=(2\alpha_k)^{-1}$.  Since $E_k=o(\alpha_k-1/2)$, disjoint conjugation-invariant
disks of radius $C E_k$ around $\pm\beta_k$ are contained in $\D$ and stay a
distance $\asymp\alpha_k-1/2$ from $\T$.  On their boundaries the model sine
has size $\gg C E_k$, while the perturbation is $O(E_k)$ and is dominated after
choosing $C$ large enough.
Thus $p$ has one simple zero in each disk.  Since the disks are
invariant under conjugation and $p$ has real coefficients, these zeros
are real.  Cauchy's estimate applied to~\eqref{eq:moving-p-quantitative} also
gives
$p'(0)=2\pi\alpha_k+O(E_k)\ne0$, so
Lemma~\ref{lem:odd-part-completion} shows that the completion $P$ has degree
$N-1$ and a simple zero at the origin.

Moreover
\[
 1-\beta_k=\frac{2\alpha_k-1}{2\alpha_k}\asymp \alpha_k-\frac12,
\]
and $E_k=1/k=o(\alpha_k-1/2)$ implies
$N(\alpha_k-1/2)\to\infty$.  It remains to control the reflected term in the
subrange where the inner zero approaches the unit circle.  In this subrange the
single estimate $p(1/z)=O(\alpha_k-1/2)$ is not uniform over the whole range
$1/2<\alpha_k<1$.

Fix a small absolute $\gamma>0$.  If $\alpha_k-1/2\ge\gamma$, then the disks around
$\pm\beta_k$ are contained in $|z|\le\varrho_\gamma<1$ for all large $k$.  By
\eqref{eq:moving-p-quantitative} with $R=3$, $p(1/z)=O_\gamma(1)$ on
those disks, and hence
\[
 |z|^N|p(1/z)|\ll_\gamma \varrho_\gamma^N=o(E_k).
\]

It remains to consider the near-collision subrange
$0<\alpha_k-1/2<\gamma$.  For $z$ in either disk,
$1/z=\pm2\alpha_k+O(E_k)$, and hence, uniformly on the two disks,
\[
 \sin\!\left(2\pi\alpha_k\frac1z\right)
 =\sin(\pm4\pi\alpha_k^2)+O(E_k)
 =O(\alpha_k-1/2)+O(E_k).
\]
The moving-edge approximation therefore gives
\[
 p(1/z)=O(\alpha_k-1/2)+O(E_k)=O(\alpha_k-1/2).
\]
Also $|z|\le\exp\{-c(\alpha_k-1/2)\}$ on the two disks, after increasing $k$ if
necessary.  Therefore, with $x_k=N(\alpha_k-1/2)\to\infty$,
\[
 |z|^N|p(1/z)|
 \ll (\alpha_k-1/2)e^{-cx_k}
 =O(x_ke^{-cx_k}/N)=o(E_k).
\]
In both subranges the reflected term $z^Np(1/z)$ is smaller than the
Rouch\'e margin on the two small boundaries.  Rouch\'e's theorem gives one zero
of the completion in each of the two conjugation-invariant disks.  Since the completion
has real coefficients, the unique zero in each disk is real; the disk radii
are $O(E_k)$, so the two inner zeros have locations
$\pm((2\alpha_k)^{-1}+O(E_k))$.  Signed reciprocity supplies the reciprocal
pair.  Since
\[
 \bigl((2\alpha_k)^{-1}+O(E_k)\bigr)^{-1}
 =2\alpha_k+O(E_k),
\]
this gives the second formula in \eqref{eq:separated-quartet-location}.  For all sufficiently large $k$, one has
$3\le N/2$, so Lemma~\ref{lem:disk-zero-circle-count}, applied with $r=3$,
gives at least $N-6$ distinct unit-circle zeros.  These distinct
unit-circle zeros, the simple zero at the origin, and the four zeros in the real
reciprocal quartet account for the full degree $N-1$.  Thus all remaining zeros
lie on $\T$ and are simple.
\end{proof}

\subsection{The critical collision}

At the formal critical value $m_c(k)$, and more generally whenever
$\alpha_{k,m}\to1/2$, the moving model develops boundary zeros at $\pm1$.
The critical containment is proved by combining the annular exclusion below
with the boundary-sensitive winding count of
Lemma~\ref{lem:disk-zero-circle-count-boundary}, reality, oddness, and signed
reciprocity.  The winding count leaves room for at most four off-circle zeros,
whereas localization near $\pm1$ makes the orbit of any nonreal off-circle zero
under conjugation, sign change, and inversion consist of eight distinct points.

\newpage
\begin{lemma}[Annular exclusion away from the critical collars]
\label{lem:critical-annular-exclusion}
Let $N_j\in\Z_{\ge1}$ with $N_j\to\infty$, let
$\sigma_j\in\{\pm1\}$, and let $p_j\in\R[z]$ be polynomials with
$\deg p_j<N_j$ that converge locally uniformly to
$\sin(\pi z)$.  Put
\[
 P_j(z)=p_j(z)+\sigma_j z^{N_j}p_j(1/z),
\]
so that $P_j$ is an ordinary polynomial.  Assume that $P_j\not\equiv0$ for all
$j$.
Fix $0<u_{\rm ann}<u_{\rm col}<\log(3/2)$ and $0<\vartheta_{\rm col}<\pi/4$, and define
\[
 \mathscr A_{\rm ann}=\{e^{-u_{\rm ann}}\le |z|\le e^{u_{\rm ann}}\},
\]
\[
 U_+=\{e^{u+i\vartheta}: |u|<u_{\rm col},\ |\vartheta|<\vartheta_{\rm col}\},
\]
\[
 U_-=\{-e^{u+i\vartheta}: |u|<u_{\rm col},\ |\vartheta|<\vartheta_{\rm col}\}.
\]
Then, for all sufficiently large $j$, every
zero of $P_j$ in $\mathscr A_{\rm ann}\setminus(U_+\cup U_-)$ lies on $\T$.
\end{lemma}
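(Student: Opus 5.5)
The plan is to compare the two summands of $P_j$ along radial segments, using a monotone-modulus argument along rays that does not depend on the winding lemmas.

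Fix an angle $\vartheta$ and parametrize the ray by $z=e^{u+i\vartheta}$ with $|u|\le u_{\rm ann}$. Since $p_j$ has real coefficients, $|p_j(1/z)|=|p_j(\overline{1/z})|=|p_j(e^{-u+i\vartheta})|$. Hence a zero of $P_j$ at $z=e^{u+i\vartheta}$ forces
\[
 |p_j(e^{u+i\vartheta})|=e^{N_ju}\,|p_j(e^{-u+i\vartheta})| .
\]
With $\psi_{j,\vartheta}(u):=\log|p_j(e^{u+i\vartheta})|-\tfrac{N_j}{2}u$, this condition reads $\psi_{j,\vartheta}(u)=\psi_{j,\vartheta}(-u)$. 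It therefore suffices to show that $\psi_{j,\vartheta}$ is strictly decreasing on $[-u_{\rm ann},u_{\rm ann}]$ for every relevant $\vartheta$. That forces $u=0$, i.e.\ $z\in\T$.

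The relevant angles split into two families. If $|\vartheta|<\vartheta_{\rm col}$ or $|\vartheta-\pi|<\vartheta_{\rm col}$, then the whole segment $\{e^{u+i\vartheta}:|u|\le u_{\rm ann}\}$ lies in $U_+$ or $U_-$, because $u_{\rm ann}<u_{\rm col}$. Such rays contain no points of $\mathscr A_{\rm ann}\setminus(U_+\cup U_-)$ and can be ignored. For all other $\vartheta$, the segment lies in
\[
 K=\mathscr A_{\rm ann}\setminus(U_+\cup U_-),
\]
which is compact. The zeros of $\sin(\pi z)$ are the integers. The only integers of modulus in $[e^{-u_{\rm ann}},e^{u_{\rm ann}}]\subset(2/3,3/2)$ are $\pm1$, and these lie in the open sets $U_\pm$. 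Hence $\sin(\pi z)$ has no zero on $K$, and $\min_K|\sin(\pi z)|=\delta>0$. Since $u_{\rm ann}<\log(3/2)$ is fixed, $K$ is contained in a fixed disk, so locally uniform convergence gives the following for all large $j$, uniformly on $K$ (using Cauchy estimates on a slightly larger compact neighbourhood of $K$ still avoiding the zeros $\pm1$):
\[
 |p_j|\ge\delta/2,\qquad |p_j'|\le C.
\]

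On these rays,
\[
 \frac{d}{du}\psi_{j,\vartheta}(u)=\re\frac{z\,p_j'(z)}{p_j(z)}-\frac{N_j}{2}\le \frac{2C e^{u_{\rm ann}}}{\delta}-\frac{N_j}{2},
\]
and the right-hand side is negative for all large $j$, because $N_j\to\infty$. This bound is uniform in $\vartheta$. So $\psi_{j,\vartheta}$ is strictly decreasing, and every zero of $P_j$ in $K$ has $u=0$. Note also that $P_j\not\equiv0$ is assumed, and that $p_j$ does not vanish on $K$. The latter shows that a zero of $P_j$ in $K$ really does produce the modulus equality above, with both sides nonzero, so the logarithms are defined.

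The main point requiring care is the uniform lower bound for $|p_j|$ on the part of the annulus outside the collars. It depends on checking that the collars, as defined, cover exactly the radial segments through the only zeros $\pm1$ of the limit inside the annulus, so that $K$ is a compact set avoiding every zero of $\sin(\pi z)$. The argument also uses that $K$ is a union of full radial segments over $|u|\le u_{\rm ann}$, which holds because both $\mathscr A_{\rm ann}$ and $U_\pm$ are defined by angular and logarithmic-radial conditions. With these in hand, the factor $N_j/2$ dominates the bounded logarithmic derivative.
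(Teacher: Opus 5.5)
Your proof is correct and is essentially the paper's argument. Both proofs bound $|p_j'/p_j|$ uniformly on the compact log-polar set $\mathscr A_{\rm ann}\setminus(U_+\cup U_-)$, which is a union of full radial segments avoiding $\pm1$, integrate this bound along the radial segment from $e^{-u+i\vartheta}$ to $e^{u+i\vartheta}$, and let the growth $N_j$ coming from $z^{N_j}$ dominate; your strict monotonicity of $\log|p_j(e^{u+i\vartheta})|-\tfrac{N_j}{2}u$ is just a repackaging of the paper's inequality $N_j|\log r|\le D'|\log r|$.
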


\begin{proof}
Because $u_{\rm col}>u_{\rm ann}$, the set
$\mathscr A_{\rm ann}\setminus(U_+\cup U_-)$ is exactly the image of the compact
log-polar set
\[
 \mathscr K=
 \left\{(u,\vartheta): |u|\le u_{\rm ann},\
 \operatorname{dist}_{\R/2\pi\Z}(\vartheta,\{0,\pi\})
 \ge \vartheta_{\rm col}\right\},
\]
where $\operatorname{dist}_{\R/2\pi\Z}$ denotes angular distance modulo
$2\pi$.  Choose a slightly larger compact log-polar set $\mathscr K^\ast$ by replacing
$u_{\rm ann}$ with $u_{\rm col}$ and $\vartheta_{\rm col}$ with
$\vartheta_{\rm col}/2$.  Since $u_{\rm col}<\log(3/2)$, its image
$K^\ast$ lies in $\{2/3<|z|<3/2\}$ and avoids the only zeros $\pm1$ of
$\sin(\pi z)$ in that annulus.  Since $\sin(\pi z)$ is zero-free on
$K^\ast$, local uniform convergence gives $|p_j(z)|\ge c>0$ there for all
sufficiently large $j$.  Cauchy's formula on a fixed neighborhood of
$K^\ast$ gives $|p_j'(z)|\le C$.  Hence, for some constant $D>0$,
\begin{equation}
 \left|\frac{p_j'(z)}{p_j(z)}\right|\le D
 \qquad(z\in K^\ast)
 \label{eq:critical-log-derivative-bound}
\end{equation}
for all sufficiently large $j$.

Let $z=re^{i\vartheta}$ lie in
$\mathscr A_{\rm ann}\setminus(U_+\cup U_-)$.  The entire radial segment
$e^{v+i\vartheta}$ with $|v|\le |\log r|$ remains in $K^\ast$.  Hence
\[
 \frac{d}{dv}\log|p_j(e^{v+i\vartheta})|
 =\Re\left(e^{v+i\vartheta}
 \frac{p_j'(e^{v+i\vartheta})}{p_j(e^{v+i\vartheta})}\right),
\]
and \eqref{eq:critical-log-derivative-bound} gives
\begin{equation}
 \left|\log\left|
 \frac{p_j(re^{i\vartheta})}{p_j(r^{-1}e^{i\vartheta})}
 \right|\right|
 \le D'|\log r|,
 \qquad D':=2e^{u_{\rm col}}D.
 \label{eq:critical-radial-ratio}
\end{equation}

Both $p_j(re^{i\vartheta})$ and $p_j(r^{-1}e^{-i\vartheta})$ are nonzero,
because these points lie in the zero-free set $K^\ast$.  If
$P_j(re^{i\vartheta})=0$, then, because $p_j$ has real coefficients,
\[
 r^{N_j}
 =\left|\frac{p_j(re^{i\vartheta})}
 {p_j(r^{-1}e^{-i\vartheta})}\right|
 =\left|\frac{p_j(re^{i\vartheta})}
 {p_j(r^{-1}e^{i\vartheta})}\right|.
\]
Taking logarithms and using \eqref{eq:critical-radial-ratio} yields
$N_j|\log r|\le D'|\log r|$.  Thus $r=1$ once $N_j>D'$, which proves the
claim.
\end{proof}

\begin{theorem}[Critical-window containment]
\label{thm:critical-window-counting}
Let $k\to\infty$ through even weights and let $m=m(k)\in\Z_{\ge0}$ vary with
$k$.  Suppose that $\alpha_{k,m}\to1/2$.  Then, for all sufficiently large
$k$, uniformly over normalized level-one Hecke eigenforms $f\in S_k(\SLZ)$,
the polynomial $Q_{f,m}^{-}$ is nonzero, and its nonzero off-circle zero set
is either empty or consists of the four simple real zeros
\[
 \{\pm b_{f,m,k},\,\pm b_{f,m,k}^{-1}\},\qquad 0<b_{f,m,k}<1.
\]
\end{theorem}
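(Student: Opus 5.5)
We will reduce the theorem to the completion $P=P_{f,m,k}$ and combine three earlier results: the moving-sine approximation, the annular exclusion lemma, and the boundary-sensitive winding count.

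\emph{Step 1: set-up and the origin.} Since $\alpha_{k,m}\to1/2$, Proposition~\ref{prop:transition-moving-edge-quantitative} gives $c_0\ne0$ for large $k$. It also shows that $p=p_{f,m,k}$ converges to $\sin(\pi z)$ uniformly on $|z|\le3$, uniformly in $f$. By Lemma~\ref{lem:odd-part-completion}, $Q^-_{f,m}$ is a nonzero real multiple of $P$. Cauchy's estimate gives $p'(0)=2\pi\alpha_{k,m}+O(1/k)\ne0$, so $P$ is odd, has degree $N-1$ with $N=2M$, and has a simple zero at $0$. Since $P$ has a nonzero coefficient, $Q^-_{f,m}\not\equiv0$. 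Uniformity over $f$ will follow by running the argument along an arbitrary sequence $(k_j,f_j)$ and arguing by contradiction.

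\emph{Step 2: localization of off-circle zeros.} Nonzero zeros of $P$ come in inversion pairs, so it suffices to locate those in $\overline{\D}$.
\begin{itemize}
\item On $|z|\le e^{-u_{\rm ann}}$, away from small disks around the zeros $0,\pm1$ of $\sin(\pi z)$, we have $|p|\ge c$ and $|z|^N|p(1/z)|\le e^{-u_{\rm ann}N}\sup_{|w|\le 3}|p(w)|\to0$. This includes the region $1/3\le|z|$, where $|1/z|\le3$.
\item For $|z|<1/3$, the reflected term $z^Np(1/z)$ has only monomials of degree at least $M$. Its contribution is bounded by coefficient bounds (Proposition~\ref{prop:transition-global-factorial-tail}) times $3^{-M}$ after reflection, which is negligible. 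Rouché near $0$ then shows that the only zero of $P$ in $|z|\le e^{-u_{\rm ann}}$ is the origin.
\item Lemma~\ref{lem:critical-annular-exclusion}, applied along any sequence with $\sigma_j=(-1)^{m_j}$, shows that every zero in $\mathscr A_{\rm ann}\setminus(U_+\cup U_-)$ lies on $\T$.
\end{itemize}
Hence every nonzero off-circle zero lies in $U_+\cup U_-$, that is, near $\pm1$.

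\emph{Step 3: counting.} Rouché on a circle of radius slightly less than $1$ and slightly more than $1$, comparing with $\sin(\pi z)$, shows that $p$ has at most $3$ zeros in $\overline\D$: the origin and at most one zero near each of $\pm1$. Lemma~\ref{lem:disk-zero-circle-count-boundary} with $R=3$ then gives at least $N-6$ zeros of $P$ on $\T$, counted with multiplicity. Since $\deg P=N-1$ and the origin is a simple zero, at most four nonzero zeros of $P$ lie off $\T$, with multiplicity.

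\emph{Step 4: the orbit argument.} Let $z_0$ be a nonreal off-circle zero. Its orbit under conjugation, negation and inversion is $\{\pm z_0,\pm\bar z_0,\pm1/z_0,\pm1/\bar z_0\}$. Since $z_0$ lies in $U_\pm$, is not real and satisfies $|z_0|\ne1$, these eight points are distinct: $z_0\ne\bar z_0$, the points near $1$ are separated from those near $-1$, and inversion changes the modulus. Eight zeros would contradict the bound of four, so every off-circle zero is real.

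A real off-circle zero $b$ generates the quartet $\{\pm b,\pm b^{-1}\}$. Two distinct quartets, or a multiple zero in the quartet, would need more than four off-circle zeros with multiplicity. So the off-circle set is empty or exactly one quartet of simple zeros, and we normalize $0<b<1$.

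\emph{Main obstacle.} The delicate step is the exact bookkeeping in Step 3: showing that $p$ has at most three zeros in the closed disk, uniformly in $f$. When $\alpha_{k,m}$ sits within $O(1/k)$ of $1/2$, the perturbed zeros of $p$ near $\pm1$ may sit exactly on $\T$. Plain Rouché on $\T$ fails there, so we apply it on slightly larger and smaller circles, where $\sin(\pi z)$ is bounded below by a fixed constant. This allows boundary zeros, which is exactly why the boundary version of the winding lemma is the one used.
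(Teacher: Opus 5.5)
Your proposal is correct and follows essentially the same route as the paper: Rouch\'e against $\sin(\pi z)$ on a circle of radius $1+\delta$ caps the zeros of $p$ in $\overline{\D}$ at three, the boundary-sensitive winding lemma then gives $N-6$ zeros on $\T$, interior and annular exclusion localize the off-circle zeros in the collars $U_\pm$, and the eight-point orbit argument forces those zeros to be real. The only cosmetic difference is near the origin: you bound the reflected term by the global factorial coefficient estimate (giving $O(3^{-M})$), whereas the paper uses local uniform convergence on the compact set swept by $1/z$, followed by Rouch\'e on $|z|=\varepsilon$.
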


\begin{proof}
Let $N=2M$.  By Proposition~\ref{prop:transition-moving-edge-quantitative},
$c_0\ne0$ for all sufficiently large $k$.  Hence $p_{f,m,k}$ and
$P_{f,m,k}$ are defined; write $p=p_{f,m,k}$ and $P=P_{f,m,k}$ throughout the
proof.  The sine estimate \eqref{eq:moving-p-quantitative} and
$\alpha_{k,m}\to1/2$ give this convergence uniformly in the normalized
eigenform.  More precisely, for every fixed $R>0$,
\[
 \sup_{\substack{f\in S_k(\SLZ)\ \mathrm{normalized}}}
 \sup_{|z|\le R}
 \left|p_{f,m,k}(z)-\sin(\pi z)\right|\longrightarrow0.
\]
In particular, $p(z)\to\sin(\pi z)$ locally uniformly, and every sufficiently
large-$k$ choice made below may be taken uniformly in $f$.

\smallskip
\noindent\emph{Step 1: disk zeros of $p$ and the four-zero budget.}
Choose a small fixed
$0<\delta<1/2$ and then choose numbers
$0<u_{\rm ann}<u_{\rm col}<\log(1+\delta)$ and $0<\vartheta_{\rm col}<\pi/4$.
We take $U_+$ and $U_-$ to be log-polar collars around $1$ and $-1$:
\[
 U_+=\{e^{u+i\vartheta}: |u|<u_{\rm col},\ |\vartheta|<\vartheta_{\rm col}\},
\]
\[
 U_-=\{-e^{u+i\vartheta}: |u|<u_{\rm col},\ |\vartheta|<\vartheta_{\rm col}\}.
\]
These collars are invariant under
$z\mapsto\bar z$ and $z\mapsto1/\bar z$ and are contained in $|z|<1+\delta$.
Choose also a small disk $U_0$ about $0$, disjoint from the collars, with
$\overline{U_0}\subset\{|z|<\varrho_0\}$ for some $\varrho_0<e^{-u_{\rm ann}}$.  The parameters are
chosen so that the closures of $U_0,U_+$ and $U_-$ are pairwise disjoint, their
boundaries are piecewise smooth, and no boundary contains a zero of
$\sin(\pi z)$.  Each of these three neighborhoods contains exactly one zero of
$\sin(\pi z)$, namely $0$, $1$ and $-1$, respectively.  Shrinking the parameters
if necessary, put
\[
 \Omega=\{z:|z|\le 1+\delta\}\setminus(U_0\cup U_+\cup U_-).
\]
The compact set $\Omega$ is zero-free for $\sin(\pi z)$, so
$\min_\Omega|\sin(\pi z)|=:c_\Omega>0$.  Local uniform convergence gives
$|p-\sin(\pi z)|<c_\Omega/2$ on $\Omega$ for all large $k$; hence
$p$ has no zero in $\Omega$.  Since the boundaries of $U_0,U_+$ and
$U_-$ are contained in $\Omega$, the same inequality gives the Rouch\'e margin on
each boundary.  Rouch\'e's theorem therefore compares $p$ with
$\sin(\pi z)$ and gives exactly one zero of $p$ in each neighborhood,
counted with multiplicity.  Therefore
$p$ has exactly these three zeros in $|z|<1+\delta$, and in particular at
most three zeros in $\overline\D$.  Cauchy's estimate on a fixed circle around
the origin also gives $p'(0)\to\pi$, so the zero at the origin is simple.

By Lemma~\ref{lem:odd-part-completion}, the completion $P$ is a nonzero real
scalar multiple of $Q_{f,m}^{-}$.  Since $p'(0)\ne0$,
Lemma~\ref{lem:odd-part-completion} also shows that $P$ is an odd polynomial of
degree $N-1$ with a simple zero at the origin.  Since $p$ has at most
three zeros in $\overline\D$ and, for all sufficiently large $k$, $3\le N/2$,
Lemma~\ref{lem:disk-zero-circle-count-boundary} gives at least $N-6$ zeros of
$P$ on $\T$, counted with multiplicity.  Together
with the simple zero at the origin, this leaves at most four nonzero zeros away
from $\T$, counted with multiplicity.

\smallskip
\noindent\emph{Step 2: localization of the possible off-circle zeros.}
We next show that every nonzero off-circle zero in the open unit disk must lie
in $U_+\cup U_-$; the exterior zeros are then their reciprocal partners.

First, consider the compact set
\[
 K_{\mathrm{int}}
 =\{z:|z|\le e^{-u_{\rm ann}}\}\setminus(U_0\cup U_+\cup U_-).
\]
It is zero-free for $\sin(\pi z)$ and stays a positive distance from the
origin.  Hence $p$ is bounded away from zero there for all large $k$, while
$1/z$ ranges over a fixed compact set and
$z^Np(1/z)=O(e^{-Nu_{\rm ann}})$.  Thus $P$ has no zeros in
$K_{\mathrm{int}}$.

Second, we handle the neighborhood $U_0$ of the origin.  Choose
$\varepsilon>0$ so small that $\{z:|z|\le\varepsilon\}\subset U_0$.
On the compact set
\[
 \mathcal K_{\mathrm{orig}}=\overline{U_0}\setminus\{z:|z|<\varepsilon\}
\]
the function $\sin(\pi z)$ is zero-free, so $|p(z)|\ge c_\varepsilon>0$
there for all large $k$.  Since $|z|\le\varrho_0<1$ on $\mathcal K_{\mathrm{orig}}$ and $1/z$ ranges over
a fixed compact subset of $\C$, local uniform convergence gives
$p(1/z)=O_\varepsilon(1)$ on $\mathcal K_{\mathrm{orig}}$; hence
$z^Np(1/z)=O_\varepsilon(\varrho_0^N)$.  Thus $P$ has no zeros in $\mathcal K_{\mathrm{orig}}$.
On the circle $|z|=\varepsilon$, local uniform convergence gives
$|p(z)|\gg\varepsilon$, while
$z^Np(1/z)=O_\varepsilon(\varepsilon^N)$.  Rouch\'e's theorem therefore
shows that $P$ has exactly one zero in $\{z:|z|<\varepsilon\}$.  Since $P$ is odd,
that zero is the origin, and it is simple.

Third, we exclude off-circle zeros in the annulus near $\T$ away from
$\pm1$.  Zeros in the closed subdisk $|z|\le e^{-u_{\rm ann}}$ outside
$U_0\cup U_+\cup U_-$ have already been excluded by the
$K_{\mathrm{int}}$ argument above, and the exterior zeros are their reciprocal
partners.  In the
annulus
\[
 \mathscr A_{\rm ann}=\{e^{-u_{\rm ann}}\le |z|\le e^{u_{\rm ann}}\},
\]
Lemma~\ref{lem:critical-annular-exclusion}, applied to
$p_j=p$, $N_j=N$, and $\sigma_j=(-1)^m$, shows that every zero of $P$
in $\mathscr A_{\rm ann}\setminus(U_+\cup U_-)$ lies on $\T$.  This application
is uniform in $f$: otherwise one could choose a sequence of normalized
eigenforms and violating zeros, and the displayed uniform compact convergence
would satisfy the hypotheses of Lemma~\ref{lem:critical-annular-exclusion}, a
contradiction.  Therefore every nonzero off-circle zero in the open disk lies
in $U_+\cup U_-$.  By signed reciprocity, the zeros outside the disk are the
reciprocals of these inside zeros.

\smallskip
\noindent\emph{Step 3: exclusion of nonreal off-circle orbits.}
The localization in Step 2 is used here to rule out possible orbit collapses on
the imaginary axis; without that localization the symmetry orbit below could be
smaller at special points.  The polynomial $P$ has real coefficients, is odd, and satisfies
\[
 P(z)=(-1)^m z^NP(1/z).
\]
Thus any nonzero zero $\zeta$ off $\T$ brings with it
all zeros in the orbit
\[
 \zeta,\quad -\zeta,\quad \bar\zeta,\quad -\bar\zeta,\quad
 \zeta^{-1},\quad -\zeta^{-1},\quad \bar\zeta^{-1},\quad -\bar\zeta^{-1}.
\]
For $U_+$ and $U_-$ chosen sufficiently small, every point in their union has
real part bounded away from $0$ in absolute value and is separated from the
imaginary axis.  We now exclude all possible collapses of this orbit for a
non-real off-circle zero.  The equalities $\zeta=\bar\zeta$ and
$\zeta=-\bar\zeta$ would force $\zeta$ to be real or purely imaginary,
respectively; the first is excluded by non-reality and the second by the
localization near $\pm1$.  The equalities $\zeta=\zeta^{-1}$ and
$\zeta=-\zeta^{-1}$ would force $\zeta^2=1$ or $\zeta^2=-1$, hence a real
point $\pm1$ or a purely imaginary point $\pm i$, again impossible for a
localized non-real off-circle zero.  Finally, $\zeta=\bar\zeta^{-1}$ is
equivalent to $|\zeta|=1$, and $\zeta=-\bar\zeta^{-1}$ would imply
$|\zeta|^2=-1$.  The same checks after multiplying by signs or conjugating show
that the displayed orbit contains eight distinct off-circle zeros.  This
contradicts the four-zero bound.  Hence all off-circle zeros are real.

\smallskip
\noindent\emph{Step 4: the real quartet and simplicity.}
If the off-circle set is nonempty, let $b\in(0,1)$ be the absolute value of a
real off-circle zero in $\D$.  Oddness and signed reciprocity force the four
distinct zeros
\[
 \{\pm b,\,\pm b^{-1}\}
\]
with equal multiplicity.  The four-zero multiplicity budget from Step~1 shows
that these are the only off-circle zeros and that each is simple.
\end{proof}

\section{The lower endpoint \texorpdfstring{$\alpha\to0$}{alpha to 0}}
\label{sec:lower-endpoint}

The lower separated phase in Theorem~\ref{thm:separated-moving-phase}
already treats sequences for which $\alpha_{k,m}\to0$ but
$k\alpha_{k,m}\to\infty$.  It remains to consider the complementary range
$k\alpha_{k,m}=O(1)$, which we state uniformly as
$\alpha_{k,m}\le\mathfrak a/k$ with $\mathfrak a$ fixed.  There the sine model
has size $O(k^{-1})$ on $\T$,
the same size as the absolute error in
Proposition~\ref{prop:transition-moving-edge-quantitative}, so a relative
estimate is required.  In this range the entire right-edge array, not only its
first $O(\log k)$ coefficients, carries powers of $\alpha_{k,m}$.  The weighted
estimate in Lemma~\ref{lem:transition-model-localization} supplies the required
relative control, beginning with the coefficient decay below.

\begin{lemma}[Lower-endpoint coefficient decay]
\label{lem:alpha-zero-weighted-tail}
Fix $\mathfrak a>0$.  There are constants $K_{\mathfrak a}\ge1$ and
$A_{\mathfrak a}\ge1$ such that the following holds.  Let
$k\ge K_{\mathfrak a}$ be even and write $k-2=2M$.  Let $m\ge0$ be an
integer, and let $f\in S_k(\SLZ)$ be a normalized Hecke eigenform.  If
\[
 \alpha:=\alpha_{k,m}\le \frac{\mathfrak a}{k},
\]
then $c_0\ne0$, and the right-edge coefficients in~\eqref{eq:cr-definition}
satisfy
\begin{equation}
 \left|\frac{c_r}{c_0}\right|
 \le \frac{(A_{\mathfrak a}\alpha)^r}{r!}
 \qquad(1\le r\le M).
 \label{eq:alpha-zero-coefficient-tail}
\end{equation}
Moreover,
\begin{equation}
 \frac{c_1}{c_0}=2\pi\alpha\left(1+O_{\mathfrak a}(k^{-1})\right).
 \label{eq:alpha-zero-first-coeff}
\end{equation}
The constants and the $O$-term are independent of $m$ and $f$.
\end{lemma}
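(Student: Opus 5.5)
The plan is to rerun the split-Mellin comparison of Proposition~\ref{prop:transition-near-edge-ratio} for every $1\le r\le M$, not only for $r\le C\log k$. The hypothesis $\alpha\le\mathfrak a/k$ makes the saddle curvature large enough for the exponentially weighted tail \eqref{eq:transition-weighted-Jn-tail} to be applied with a tilt $H$ as large as $k$. First record the saddle sizes. Since $2\pi e^{x_0}=s_0/\alpha\ge s_0k/\mathfrak a$, we have $e^{x_0}\gg_{\mathfrak a}k^2$, so $x_0\ge 2\log k-O_{\mathfrak a}(1)$. From \eqref{eq:transition-B}, $B_{s_0}\ge 2\pi e^{x_0}=s_0/\alpha\gg_{\mathfrak a}k^2$. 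Hence $B_{s_0}\ge\mathfrak a'^{-1}H^2$ with $H=s_0$ for a suitable $\mathfrak a'$ depending only on $\mathfrak a$, and Lemma~\ref{lem:transition-model-localization} applies with this $H$. The identity $c_0\ne0$ and the denominator asymptotics $\Lambda_f^{(m)}(s_0)=\mathcal J_m(s_0)(1+O(k^{-A}))$ come directly from \eqref{eq:transition-denominator-lower}. We write
\[
 \frac{c_r}{c_0}=\binom{d}{r}\frac{\Lambda_f^{(m)}(s_0-r)}{\Lambda_f^{(m)}(s_0)},\qquad d=k-2,
\]
and use $\binom dr\le s_0^r/r!$ together with $e^{-x_0}=2\pi\alpha/s_0$. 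The target \eqref{eq:alpha-zero-coefficient-tail} then reduces to showing that $|\Lambda_f^{(m)}(s_0-r)|/\mathcal J_m(s_0)\le C^re^{-rx_0}$ for some $C=C_{\mathfrak a}$.

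For the main Mellin term, use \eqref{eq:transition-I-Fourier} and substitute $y=e^x$. This gives
\[
 \mathcal J_{n,m}(s_0-r)=\mathcal J_m(s_0)\int e^{-rx}e^{-2\pi(n-1)e^x}\,d\varpi_{s_0}(x).
\]
Apply \eqref{eq:transition-weighted-Jn-tail} with $H=s_0$, and weight by $|a_f(n)|\le\tau(n)n^{\kappa}$. The first piece of that bound, $C^re^{-rx_0}e^{-c(n-1)e^{x_0}}$, is summable against $\tau(n)n^\kappa$ with bounded total, because $e^{x_0}\gg k^2$ overwhelms $n^{\kappa}$ for $n\ge2$. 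The second piece is $e^{-cB}e^{-2\pi(n-1)}$. Summed against $\tau(n)n^\kappa$ it is at most $e^{-cB}\Gamma(\kappa+2)(2\pi)^{-\kappa}\ll e^{-cB+O(k\log k)}$. It must be compared with $e^{-rx_0}\ge(2\pi\alpha/s_0)^{k}$. Since $B\ge s_0/\alpha$, we need $c s_0/\alpha\ge k\log(s_0/\alpha)+O(k\log k)$. This holds because $1/\alpha\gg k$ dominates $\log(1/\alpha)$ and $\log k$, for $k\ge K_{\mathfrak a}$.

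The reflected term $\mathcal I_{f,m}(1+r)$ is treated the same way. Its relative size is $\int e^{-(s_0-1-r)x}e^{-2\pi(n-1)e^x}\,d\varpi_{s_0}$, with tilt $s_0-1-r\le s_0$. This produces $C^{s_0}e^{-(s_0-1-r)x_0}$, and since $s_0-1-r\ge M\ge r$ that is at most $C^{s_0}e^{-rx_0}e^{-(M-r)x_0}$. The constant $C^{s_0}$ is absorbed by $e^{-x_0}\ll k^{-2}$ whenever $M-r$ is large; the case $r$ near $M$ is the delicate one. There we accept a factor $C^{r}$ by bounding $C^{s_0}\le C^{3r}$, using $s_0\le 3r$ when $r\ge M/2$, and split the range at $r=M/2$ accordingly. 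Collecting the pieces gives the bound with $A_{\mathfrak a}=2\pi C'$.

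For \eqref{eq:alpha-zero-first-coeff}, set $r=1$. Expand $e^{-x}=e^{-x_0}e^{-(x-x_0)}$ to second order on $|x-x_0|\le1$, using the mean bound \eqref{eq:transition-mean} and the variance bound \eqref{eq:transition-variance}, and use \eqref{eq:transition-weighted-tail} off that window. This yields $\int e^{-x}\,d\varpi=e^{-x_0}(1+O(B^{-1}))$. The $n\ge2$ and reflected contributions are smaller by the estimates above. Then $G(s_0)/G(s_0-1)=(s_0-1)/(2\pi)$ gives $c_1/c_0=(s_0-1)e^{-x_0}(1+O(k^{-1}))=2\pi\alpha(1+O(k^{-1}))$. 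The main obstacle is uniformity in arbitrarily small $\alpha$, that is, in arbitrarily large $m$: the exponentially small remainders $e^{-cB}$ must beat $\alpha^r$ for all $r\le M$ simultaneously. The key inequality to check carefully is $B\ge s_0/\alpha$, which guarantees $e^{-cB}\le (\alpha/k)^{O(k)}$.
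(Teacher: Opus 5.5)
Your proposal is correct and follows essentially the paper's route. Both arguments apply the weighted tail \eqref{eq:transition-weighted-Jn-tail} with a tilt of size $\asymp k$, which is legitimate because $B_{s_0}\ge s_0/\alpha\gg_{\mathfrak a}k^2$. Both then sum over Fourier modes with Deligne's bound, absorb the $e^{-cB_{s_0}+O(k\log k)}$ remainder into $C^re^{-rx_0}$, and finish with $\binom dr e^{-rx_0}\le(2\pi\alpha)^r/r!$.

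The paper streamlines two points. It bounds the reflected term via $\mathcal J_{n,m}(1+r)\le\mathcal J_{n,m}(s_0-r)$, so the tilt $H=M$ suffices. It also reads off \eqref{eq:alpha-zero-first-coeff} directly as the case $r=1$ of Proposition~\ref{prop:transition-near-edge-ratio}, which is already uniform in $m$, so your re-derivation is not needed.

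There is one small slip to fix: the claim $s_0\le 3r$ for $r\ge M/2$ is false when $M/2\le r<(2M+1)/3$; use $s_0\le 5r$ instead. The case split is unnecessary anyway. If you keep the exact factor $C^{2M-r}e^{-(2M-r)x_0}$ rather than weakening it to $C^{s_0}$, you get $(Ce^{-x_0})^{2M-r}\le(Ce^{-x_0})^{r}$ directly, because $Ce^{-x_0}<1$ and $2M-r\ge r$.
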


\begin{proof}
Set $d=k-2=2M$ and $s_0=k-1$.  By
\eqref{eq:transition-denominator-lower} and $G(s_0)>0$, after increasing
$K_{\mathfrak a}$ if necessary, one has $c_0\ne0$ uniformly in $m$ and $f$.
Let $x_0$ be defined by~\eqref{eq:transition-saddle}, and put
$Y_0=e^{x_0}$.  Since $2\pi Y_0=s_0/\alpha$, the assumption gives
$Y_0\gg_{\mathfrak a}k^2$.  The curvature at the saddle satisfies
\[
\begin{aligned}
 B_0&=2\pi Y_0+\frac{m}{x_0^2} \\
    &\ge 2\pi Y_0=\frac{s_0}{\alpha}
     \ge \frac{k(k-1)}{\mathfrak a}
     \ge \frac{M^2}{\mathfrak a}.
\end{aligned}
\]
In particular, the curvature hypothesis in
Lemma~\ref{lem:transition-model-localization} holds with $H=M$.
Let $\varpi_0$ be the probability measure on $(0,\infty)$ with density
\[
 \mathcal J_m(s_0)^{-1}e^{s_0x-2\pi e^x}x^m\,dx.
\]
After increasing $K_{\mathfrak a}$ once more,
Lemma~\ref{lem:transition-model-localization} applies with $S=s_0$, $x_S=x_0$,
$B_S=B_0$, $H=M$, and
$d\varpi_S=d\varpi_0$.

\smallskip
\noindent\emph{The weighted Mellin bound.}\par
\noindent For $1\le r\le M$ and $n\ge1$, Lemma~\ref{lem:transition-model-localization}
gives
\begin{align}
 \frac{\mathcal J_{n,m}(s_0-r)}{\mathcal J_m(s_0)}
 &=\int e^{-rx}e^{-2\pi(n-1)e^x}\,d\varpi_0(x)
 \notag\\
 &\ll_{\mathfrak a} C_{\mathfrak a}^{\,r}e^{-rx_0}e^{-c_{\mathfrak a}(n-1)Y_0}
       +e^{-c_{\mathfrak a}B_0}e^{-2\pi(n-1)}.
 \label{eq:alpha-zero-Jn-bound}
\end{align}

\smallskip
\noindent\emph{Summation over Fourier modes.}\par
\noindent Multiplying by Deligne's bound~\cite{Deligne}
$|a_f(n)|\le \tau(n)n^{(k-1)/2}$, where $\tau$ is the divisor function, and
summing over $n$, the first term in~\eqref{eq:alpha-zero-Jn-bound} contributes
only its $n=1$ term up to an exponentially small error.  Indeed, after
decreasing $c_{\mathfrak a}>0$ if necessary, for $n\ge2$ and all sufficiently
large $k$ one has
\[
 \tau(n)n^{(k-1)/2}e^{-c_{\mathfrak a}(n-1)Y_0}
 \le \exp\{-c_{\mathfrak a}(n-1)Y_0/2\},
\]
using $\tau(n)\le n$, because $Y_0/k\to\infty$ uniformly and
$\sup_{n\ge2}\log n/(n-1)<\infty$.  Hence the whole $n\ge2$ part is
$O_{\mathfrak a}(e^{-c_{\mathfrak a}Y_0})$.

For the second term, put $\nu=(k+1)/2$.  Since $\tau(n)\le n$,
\[
 \sum_{n\ge1}\tau(n)n^{(k-1)/2}e^{-2\pi(n-1)}
 \le e^{2\pi}\sum_{n\ge1}n^\nu e^{-2\pi n}.
\]
For every $n\ge1$,
\[
 n^\nu e^{-\pi n}
 \le \sup_{x>0}x^\nu e^{-\pi x}
 =\left(\frac{\nu}{\pi e}\right)^\nu,
\]
because the derivative of $\nu\log x-\pi x$ vanishes at $x=\nu/\pi$ and its
second derivative is negative.  Consequently
\[
 \sum_{n\ge1}\tau(n)n^{(k-1)/2}e^{-2\pi(n-1)}
 \le e^{2\pi}\left(\frac{\nu}{\pi e}\right)^\nu
       \sum_{n\ge1}e^{-\pi n}
 \le e^{Ck\log k}
\]
for an absolute constant $C$.  Since $B_0\gg_{\mathfrak a}k^2$, the exponent
$Ck\log k-c_{\mathfrak a}B_0$ is at most $-c'_{\mathfrak a}B_0$ for all
sufficiently large $k$, after decreasing the positive constant once.  Thus
both errors are $O_{\mathfrak a}(e^{-c_{\mathfrak a}Y_0})$.

It remains to absorb this error into the $n=1$ scale uniformly for
$1\le r\le M$.  After increasing $C_{\mathfrak a}\ge1$ if necessary,
\[
\begin{aligned}
 \log\!\left(
  \frac{e^{-c_{\mathfrak a}Y_0}}
       {C_{\mathfrak a}^{\,r}e^{-rx_0}}
 \right)
 &=-c_{\mathfrak a}Y_0+rx_0-r\log C_{\mathfrak a} \\
 &\le -c_{\mathfrak a}Y_0+M\log Y_0
  =-c_{\mathfrak a}Y_0+O(k\log Y_0)
  \longrightarrow-\infty,
\end{aligned}
\]
because $Y_0\gg_{\mathfrak a}k^2$.  Therefore
\begin{equation}
 |\mathcal I_{f,m}(s_0-r)|
 \ll_{\mathfrak a} C_{\mathfrak a}^{\,r}e^{-rx_0}\mathcal J_m(s_0)
 \qquad(1\le r\le M).
 \label{eq:alpha-zero-main-I}
\end{equation}

\smallskip
\noindent\emph{Transfer to the coefficient array.}\par
\noindent Since $1+r\le s_0-r$ for $1\le r\le M$,
\[
\begin{aligned}
 |\mathcal I_{f,m}(1+r)|
 &\le \sum_{n\ge1}|a_f(n)|\mathcal J_{n,m}(1+r) \\
 &\le \sum_{n\ge1}|a_f(n)|\mathcal J_{n,m}(s_0-r) \\
 &\ll_{\mathfrak a}
 C_{\mathfrak a}^{\,r}e^{-rx_0}\mathcal J_m(s_0),
\end{aligned}
\]
where the last estimate is the absolute Fourier majorant established above.  The denominator lower bound
\eqref{eq:transition-denominator-lower} gives
\begin{equation}
 \left|\frac{\Lambda_f^{(m)}(s_0-r)}{\Lambda_f^{(m)}(s_0)}\right|
 \ll_{\mathfrak a} C_{\mathfrak a}^{\,r}e^{-rx_0}
 \qquad(1\le r\le M).
 \label{eq:alpha-zero-Lambda-tail}
\end{equation}
Combining~\eqref{eq:alpha-zero-Lambda-tail} with
\[
 \binom dr e^{-rx_0}
 \le \frac{(de^{-x_0})^r}{r!}
 \le \frac{(s_0e^{-x_0})^r}{r!}
 =\frac{(2\pi\alpha)^r}{r!}
\]
gives
\[
 \left|\frac{c_r}{c_0}\right|
 \ll_{\mathfrak a}\frac{(2\pi C_{\mathfrak a}\alpha)^r}{r!}.
\]
Since $r\ge1$, the implied constant can be absorbed into
$C_{\mathfrak a}^{\,r}$ after increasing $C_{\mathfrak a}$.  Taking
$A_{\mathfrak a}=2\pi C_{\mathfrak a}$ then gives
\eqref{eq:alpha-zero-coefficient-tail}.  Finally,
\eqref{eq:alpha-zero-first-coeff} is the case $r=1$ of
Proposition~\ref{prop:transition-near-edge-ratio}.
\end{proof}

\begin{theorem}[The lower endpoint]
\label{thm:alpha-zero-endpoint}
Fix $\mathfrak a>0$.  For all sufficiently large even $k$, uniformly over all
integers $m\ge0$ satisfying
\[
 \alpha_{k,m}\le \frac{\mathfrak a}{k}
\]
and all normalized level-one Hecke eigenforms $f$, the origin is a simple zero
of $Q_{f,m}^{-}$, and every nonzero zero of $Q_{f,m}^{-}$ is simple and lies on
$\T$.
\end{theorem}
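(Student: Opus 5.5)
The plan is to show that, after the natural rescaling $z\mapsto z/\alpha$, the normalized edge polynomial $p=p_{f,m,k}$ behaves like $2\pi\alpha z$ with a small relative error. We then apply Rouch\'e on $\T$ against the linear term, not against the sine model, and conclude with Lemma~\ref{lem:disk-zero-circle-count}. Write $\alpha=\alpha_{k,m}\le\mathfrak a/k$ and $N=2M$. By Lemma~\ref{lem:alpha-zero-weighted-tail}, $c_0\ne0$, so $p$ and the completion $P$ are defined. By \eqref{eq:odd-edge-p}, the odd edge polynomial has the explicit form
\[
 p(z)=\sum_{\substack{1\le r\le M\\ r\ \mathrm{odd}}} i^{r-1}\frac{\widetilde c_r}{c_0}z^r .
\]
Its linear coefficient is $a_1=c_1/c_0=2\pi\alpha(1+O_{\mathfrak a}(k^{-1}))$ by \eqref{eq:alpha-zero-first-coeff}.

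\emph{Step 1: relative Rouch\'e on the unit circle.} For $|z|\le1$, \eqref{eq:alpha-zero-coefficient-tail} bounds the higher terms by
\[
 \sum_{r\ge3}\frac{(A_{\mathfrak a}\alpha)^r}{r!}
 \le (A_{\mathfrak a}\alpha)^3 e^{A_{\mathfrak a}\alpha}
 \ll_{\mathfrak a}\alpha\cdot\alpha^2 .
\]
Since $\alpha\le\mathfrak a/k$, this is $O_{\mathfrak a}(\alpha k^{-2})$, which is much smaller than $|a_1z|\asymp\alpha$ on $\T$. Rouch\'e's theorem therefore shows that $p$ has exactly one zero in $\D$, namely the simple zero at $0$ (since $p$ is odd and $a_1\ne0$), and no zero on $\T$. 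The same estimate works on a slightly larger circle $|z|=\rho$ with $\rho>1$ fixed.

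\emph{Step 2: counting.} We have $p'(0)=a_1\ne0$ and $M\ge2$. Lemma~\ref{lem:odd-part-completion} then shows that $P$ is odd of degree $N-1$ with a simple zero at $0$, and that $Q^-_{f,m}$ is a nonzero real multiple of $P$. Lemma~\ref{lem:disk-zero-circle-count}, applied with $r=1\le N/2$, yields at least $N-2$ distinct zeros of $P$ on $\T$. Together with the origin, these account for all $N-1$ zeros. Hence every nonzero zero is simple and lies on $\T$, and the origin is simple.

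\emph{Uniformity.} All constants come from Lemma~\ref{lem:alpha-zero-weighted-tail}, which is uniform in $m$ and $f$, so the conclusion holds uniformly.

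The main obstacle is already absorbed in Lemma~\ref{lem:alpha-zero-weighted-tail}: a relative bound valid for \emph{all} $1\le r\le M$ is needed, because an absolute $O(k^{-1})$ error would swamp the linear term of size $\alpha\ll k^{-1}$. Given that lemma, the only care needed is that the majorant is summed over the full range, including the central coefficient, which that lemma covers with $\widetilde c_M=c_M/2$. Note also that Rouch\'e is applied to $p$ alone; the reflected term is handled entirely by the winding lemma, so no estimate of $z^Np(1/z)$ is required.
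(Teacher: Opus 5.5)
Your proposal is correct and follows the paper's proof essentially step for step. Both arguments make a relative comparison of $p$ with its linear term $2\pi\alpha z$ via Lemma~\ref{lem:alpha-zero-weighted-tail}, then apply Lemma~\ref{lem:odd-part-completion} and the winding count of Lemma~\ref{lem:disk-zero-circle-count} with $r=1$. The only cosmetic difference is that the paper bounds $p(z)/(2\pi\alpha z)-1$ on $|z|\le 2$ and concludes that this quotient is zero-free, whereas you apply Rouch\'e directly on $\T$.
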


\begin{proof}
Let $\alpha=\alpha_{k,m}$.  By
Lemma~\ref{lem:alpha-zero-weighted-tail}, $c_0\ne0$ for all sufficiently large
$k$, uniformly in $m$ and $f$.  Hence $p_{f,m,k}$ and $P_{f,m,k}$ are defined;
write $p=p_{f,m,k}$ and $P=P_{f,m,k}$ throughout the proof.
Fix $R>0$.  We spell out the coefficient estimate, since the relative bound
below divides by $\alpha$.  Write $\widetilde c_r=c_r$ for $r<M$ and
$\widetilde c_M=c_M/2$.  From the definition of $p$,
\[
 p(z)=\sum_{\substack{1\le r\le M\\ r\ {\rm odd}}}
 i^{r-1}\frac{\widetilde c_r}{c_0}z^r.
\]
For all sufficiently large $k$ we have $M>1$, so $\widetilde c_1=c_1$.
Lemma~\ref{lem:alpha-zero-weighted-tail} gives
\[
 \frac{i^{0}\widetilde c_1}{2\pi\alpha c_0}=1+O_{\mathfrak a}(k^{-1}),
\]
and, for $|z|\le R$,
\begin{align*}
 \sum_{\substack{3\le r\le M\\ r\ {\rm odd}}}
 \frac1{2\pi\alpha}\left|\frac{\widetilde c_r}{c_0}\right||z|^{r-1}
 &\ll_{R,\mathfrak a}\frac1\alpha
   \sum_{r\ge3}\frac{(A_{\mathfrak a}\alpha)^rR^{r-1}}{r!}  \\
 &\ll_{R,\mathfrak a}\alpha^2=O_{R,\mathfrak a}(k^{-2}).
\end{align*}
Therefore the analytic quotient $p(z)/(2\pi\alpha z)$, with its removable
value at $z=0$, satisfies
\begin{equation}
 \sup_{|z|\le R}\left|
 \frac{p(z)}{2\pi\alpha z}-1\right|=O_{R,\mathfrak a}(k^{-1}).
 \label{eq:alpha-zero-relative-p}
\end{equation}
Taking $R=2$, the supremum in~\eqref{eq:alpha-zero-relative-p} is less than
$1/2$ for all sufficiently large $k$.  Hence the analytic quotient is
zero-free on $|z|\le2$.  Thus $p$ has exactly one zero there, namely a simple
zero at the origin, and in particular has no zero on $\T$.

Put $N=2M$.  The estimate~\eqref{eq:alpha-zero-relative-p} gives
$p'(0)\ne0$.  Lemma~\ref{lem:odd-part-completion} therefore shows that $P$ is a
nonzero real scalar multiple of $Q_{f,m}^{-}$, has degree $N-1$, and has a
simple zero at the origin.  Lemma~\ref{lem:disk-zero-circle-count}, applied
with $r=1$, gives at least $N-2$ distinct zeros on $\T$.  Together with the
simple zero at the origin, these zeros exhaust the degree of $P$; hence all
nonzero zeros are simple and lie on $\T$.  The same conclusion holds for
$Q_{f,m}^{-}$.
\end{proof}

\section{The upper endpoint \texorpdfstring{$\alpha\to1$}{alpha to 1}}
\label{sec:upper-endpoint}

The upper separated phase in Theorem~\ref{thm:separated-moving-phase} already
handles approaches to $\alpha_{k,m}=1$ for which
$k(1-\alpha_{k,m})\to\infty$.  The exact endpoint $m=0$ is the classical
odd-period-polynomial case identified in
Lemma~\ref{lem:mzero-classical-period}.  We treat here the remaining positive
derivative orders $m=O(\log k)$.  In this range the real model zeros near
$1/2$ and $1$ move on the same or a smaller scale than the $O(k^{-1})$ error
in the zeroth-order approximation, so a first-order Gamma-derivative
expansion is required.  The Bell-polynomial estimates and the exponentially
small $L$-factor bound are proved in Appendix~\ref{sec:bell-appendix}; the
resulting first-order edge expansion is the input for the zero analysis below.

\subsection{The first-order edge model}

Within this subsection and Appendix~\ref{sec:bell-appendix}, we write $\mu$
for the derivative order, reserving $m$ for the global weight--derivative
notation.  Accordingly, $\mathcal D_{f,\mu}$, $q_{f,\mu,k}$, and
$p_{f,\mu,k}$ denote the objects of Section~\ref{sec:normalization-completion}
with $m$ replaced by $\mu$, and
\[
 c_{\mu,r}:=\mathcal D_{f,\mu}(k-1-r)\frac{(2\pi)^r}{r!}.
\]
Thus $c_{\mu,r}$ is the right-edge coefficient at derivative order $\mu$; in
particular, $c_{\mu,0}=\mathcal D_{f,\mu}(k-1)$.

Let $\psi=\Gamma'/\Gamma$ be the digamma function.  Define
\begin{equation}
 \mathfrak d_{k,\mu}:=\frac{\mu\psi'(k-1)}{\psi(k-1)-\log(2\pi)}.
 \label{eq:slow-edge-displacement}
\end{equation}
For every fixed $\mathfrak b>0$, uniformly for integers
$1\le\mu\le\mathfrak b\log k$, one has, for all sufficiently large $k$,
\[
 \mathfrak d_{k,\mu}>0,
 \qquad
 \mathfrak d_{k,\mu}\asymp_{\mathfrak b}\frac{\mu}{k\log k}.
\]
In particular, uniformly in this range,
\[
 \mathfrak d_{k,\mu}\longrightarrow0,
 \qquad
 \mathfrak d_{k,\mu}\gg_{\mathfrak b}\frac1{k\log k}.
\]
Throughout this section and Appendix~\ref{sec:bell-appendix}, the notation
$o_{\mathfrak b,R}(\mathfrak d_{k,\mu})$ means that the ratio to
$\mathfrak d_{k,\mu}$ tends to zero uniformly over all integers
$1\le\mu\le\mathfrak b\log k$ and all normalized level-one Hecke eigenforms,
with $\mathfrak b$ and $R$ fixed.  The notation $o_{\mathfrak b}$ has the
analogous meaning when the compact radius is fixed, and the same convention
applies with $\mu$ replaced by $m$.

\begin{proposition}[First-order edge expansion in the logarithmic range]
\label{prop:slow-first-order-edge}
Fix $\mathfrak b,R>0$.  There is $K=K(\mathfrak b,R)$ such that, for every even
$k\ge K$, every integer $\mu$ with $1\le\mu\le \mathfrak b\log k$, and every
normalized level-one Hecke eigenform $f$, one has $c_{\mu,0}\ne0$ and
\begin{equation}
 \sup_{|w|\le R}\left|
 \frac{q_{f,\mu,k}(w)}{c_{\mu,0}}
 -e^{2\pi w}(1-2\pi\mathfrak d_{k,\mu}w)
 \right|=o_{\mathfrak b,R}(\mathfrak d_{k,\mu}).
 \label{eq:slow-first-order-q}
\end{equation}
Consequently
\begin{equation}
 p_{f,\mu,k}(z)=\sin(2\pi z)-2\pi\mathfrak d_{k,\mu}z\cos(2\pi z)
 +o_{\mathfrak b,R}(\mathfrak d_{k,\mu}),
 \label{eq:slow-first-order-p}
\end{equation}
uniformly for $|z|\le R$.
\end{proposition}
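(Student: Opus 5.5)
The plan is to prove \eqref{eq:slow-first-order-q} coefficient by coefficient. I will compare each normalized right-edge coefficient $c_{\mu,r}/c_{\mu,0}$ with $\frac{(2\pi)^r}{r!}(1-r\mathfrak d_{k,\mu})$, in two ranges of $r$: the first $L=\lfloor B\log k\rfloor$ coefficients, and the tail $r>L$.

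\emph{Tail.} Proposition~\ref{prop:transition-global-factorial-tail} with $A=2$ bounds the tail of $q_{f,\mu,k}/c_{\mu,0}$ on $|w|\le R$ by $C_R\sum_{r>L}(2\pi R)^r/r!+O(k^{-2})$. The model $e^{2\pi w}(1-2\pi\mathfrak d_{k,\mu}w)$ has a tail of the same factorial size. Choosing $B=B(R)$ large makes both $O(k^{-2})=o(\mathfrak d_{k,\mu})$, because $\mathfrak d_{k,\mu}\gg_{\mathfrak b}1/(k\log k)$. The same proposition also gives $c_{\mu,0}\ne0$.

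\emph{Head, $0\le r\le L$.} Here $s=k-1-r$ lies far to the right of $(k+1)/2$. So the Dirichlet series converges absolutely, and I expand by Leibniz's rule:
\[
 \mathcal D_{f,\mu}(s)=\sum_{j=0}^{\mu}\binom{\mu}{j}\frac{G^{(\mu-j)}(s)}{G(s)}L_f^{(j)}(s).
\]
Deligne's bound gives $L_f(s)=1+O(2^{-k/3})$ and $L_f^{(j)}(s)\ll(\log 2)^j2^{-k/3}$ uniformly for $r\le L$. Next I write $G^{(n)}/G=Y_n(g_1,\dots,g_n)$ as a complete Bell polynomial in $g_1=\psi(s)-\log 2\pi$ and $g_i=\psi^{(i-1)}(s)$ for $i\ge2$. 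The key Bell-polynomial input (Appendix~\ref{sec:bell-appendix}) is the uniform estimate
\[
 \frac{G^{(\mu)}(s)}{G(s)}=g_1(s)^\mu\Bigl(1+\tbinom{\mu}{2}\frac{\psi'(s)}{g_1(s)^2}+E_\mu(s)\Bigr),
\]
for $\mu\le\mathfrak b\log k$ and $s\asymp k$. Here $E_\mu$ is of lower order, and, more importantly, its variation over $s\in[k-1-L,k-1]$ is $o(\mathfrak d_{k,\mu})$. The same bound shows that the $j\ge1$ Leibniz terms are smaller by factors $(\log 2/\log k)^j\,2^{-k/3}$, hence negligible. Consequently
\[
 \frac{\mathcal D_{f,\mu}(k-1-r)}{\mathcal D_{f,\mu}(k-1)}=\Bigl(\frac{g_1(k-1-r)}{g_1(k-1)}\Bigr)^{\mu}\bigl(1+o(\mathfrak d_{k,\mu})\bigr).
\]
Taylor expansion gives $g_1(k-1-r)=g_1(k-1)-r\psi'(k-1)+O(r^2/k^2)$, and $g_1\asymp\log k$. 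Therefore the displayed ratio equals $1-r\mathfrak d_{k,\mu}+O\bigl((r\mathfrak d_{k,\mu})^2+\mu r^2/(k^2\log k)\bigr)=1-r\mathfrak d_{k,\mu}+o(\mathfrak d_{k,\mu})(1+r)^2$. Multiplying by $(2\pi)^r/r!$ and summing against $|w|^r\le R^r$, the weighted moments $\sum(1+r)^2(2\pi R)^r/r!$ are bounded in terms of $R$. Also $\sum_r r\frac{(2\pi w)^r}{r!}=2\pi we^{2\pi w}$. Together these give \eqref{eq:slow-first-order-q}.

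Finally, \eqref{eq:slow-first-order-p} follows from the definition \eqref{eq:odd-edge-p}, by taking the odd part at $w=\pm iz$. Indeed, $\frac{1}{2i}\bigl(e^{2\pi iz}(1-2\pi i\mathfrak d z)-e^{-2\pi iz}(1+2\pi i\mathfrak d z)\bigr)=\sin(2\pi z)-2\pi\mathfrak d z\cos(2\pi z)$.

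The main obstacle is the uniform Bell-polynomial step. The leading factor $g_1^\mu$ alone must carry the entire $r$-dependence at first order, and every correction must vary in $s$ by only $o(\mu/(k\log k))$, uniformly in $\mu\le\mathfrak b\log k$. My first attempt at this step is to write $G^{(\mu)}/G$ as a contour integral $\mu!\,(2\pi i)^{-1}\oint\exp\bigl(\log G(s+t)-\log G(s)\bigr)t^{-\mu-1}\,dt$ on $|t|=\mu/g_1$. I would then differentiate in $s$ under the integral, using $\partial_s g_i=O(k^{-i})$.
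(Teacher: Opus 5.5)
Your proposal is correct, and its skeleton matches the paper's proof.
\begin{itemize}
\item The coefficients are cut at $L=\lfloor B\log k\rfloor$, and Proposition~\ref{prop:transition-global-factorial-tail} with $A=2$ handles the tails and gives $c_{\mu,0}\ne0$.
\item Leibniz's rule plus Deligne's bound reduce $\mathcal D_{f,\mu}$ to $G^{(\mu)}/G$ up to a relative error $O(2^{-ck})$.
\item The factor $\lambda(s)^\mu$, with $\lambda=\psi-\log(2\pi)$, carries all of the first-order $r$-dependence. The paper writes this as $\frac{d}{ds}\log(G^{(\mu)}/G)=\mu\psi'(s)/\lambda(s)+O(k^{-2})$ and integrates from $k-1$ to $k-1-r$. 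That is the same computation as your Taylor expansion of $(\lambda(k-1-r)/\lambda(k-1))^\mu$.
\end{itemize}

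Where you differ is the proof of the Bell step.

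\emph{The paper's route.} It uses the coefficient seminorm $\|F\|_{\mu,\lambda}=\sum_{\ell\le\mu}|F_\ell|(\mu)_\ell/\lambda^\ell$. This seminorm is submultiplicative because $(\mu)_{a+b}\le(\mu)_a(\mu)_b$. It is combined with the exact identity $\frac{d}{ds}\mathcal C_{\mu,s}(F_s)=\mathcal C_{\mu,s}(\partial_sF_s)-\frac{\lambda'}{\lambda}\mathcal C_{\mu,s}(t\partial_tF_s)$. This bookkeeping is purely algebraic and gives $\frac{d}{ds}\log\bigl(G^{(\mu)}/(G\lambda^\mu)\bigr)=O(k^{-2})$ with no loss in $\mu$.

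\emph{Your route.} Your Cauchy integral also works and is shorter.
\begin{itemize}
\item Substituting $t=\tau/\lambda(s)$ puts the contour on $|\tau|=\mu$, independent of $s$.
\item Then $s$ enters only through $V_s(\tau/\lambda(s))$, where $V_s(t)=\log G(s+t)-\log G(s)-\lambda(s)t$. Its $s$-derivative is $O(|t|^2/k^2)$ on $|t|=\mu/\lambda=O_{\mathfrak b}(1)$; this includes the term from the $s$-dependence of $\tau/\lambda(s)$.
\item The max-modulus bound costs $\mu!\,e^\mu\mu^{-\mu}\asymp\sqrt\mu$. You therefore get $O(\sqrt\mu/k^2)$ rather than $O(k^{-2})$.
\item That is still $o(\mathfrak d_{k,\mu}/L)$, which is all you need.
\end{itemize}

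Two small points to record when you write it out.
\begin{itemize}
\item The explicit second-order term $\binom{\mu}{2}\psi'/\lambda^2$ also varies with $s$. It varies only by $O(\log k/k^2)=o(\mathfrak d_{k,\mu})$ across the window, so your ratio formula $(\lambda(k-1-r)/\lambda(k-1))^\mu(1+o(\mathfrak d_{k,\mu}))$ survives.
\item Your Leibniz factors should carry $\binom{\mu}{j}$. This is harmless, since $\sum_j\binom{\mu}{j}(\log 2/\lambda)^j=O_{\mathfrak b}(1)$.
\end{itemize}
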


Proposition~\ref{prop:slow-first-order-edge} is proved in
Appendix~\ref{sec:bell-appendix}.

\subsection{Zeros of the perturbed sine model}

\begin{lemma}[Zeros of the upper-endpoint model]
\label{lem:upper-endpoint-model-zeros}
For $\delta>0$ sufficiently small, set
\[
 \Phi_\delta(z)=\sin(2\pi z)-2\pi\delta z\cos(2\pi z).
\]
Then $\Phi_\delta$ has exactly five zeros in $|z|<5/4$, all simple and real, namely
\[
 0,\quad
 \pm\left(\frac12+\frac\delta2+O(\delta^2)\right),\qquad
 \pm\left(1+\delta+O(\delta^2)\right).
\]
Moreover $\Phi_\delta$ has no zero on $\T$, and
\[
 \min_{|z|=1}|\Phi_\delta(z)|\gg\delta,
 \qquad
 \min_{|z|=5/4}|\Phi_\delta(z)|\gg1.
\]
\end{lemma}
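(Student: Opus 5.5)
The plan is to treat $\Phi_\delta$ as an $O(\delta)$ perturbation of $\sin(2\pi z)$ on the fixed disk $|z|\le 5/4$, uniformly since $|2\pi\delta z\cos(2\pi z)|\ll\delta$ there. The zeros of $\sin(2\pi z)$ in $|z|<5/4$ are $0,\pm\frac12,\pm1$, all simple, and none lies on $|z|=5/4$. Hence $\min_{|z|=5/4}|\sin(2\pi z)|\gg1$, and for small $\delta$ the same lower bound holds for $\Phi_\delta$. Rouché's theorem on $|z|=5/4$ then gives exactly five zeros of $\Phi_\delta$ in $|z|<5/4$, counted with multiplicity.

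To locate these zeros, I would put small disks of fixed radius $\rho$ around each of $0,\pm\frac12,\pm1$. On their boundaries $|\sin(2\pi z)|\gg\rho$, which dominates the perturbation, so each disk contains exactly one simple zero. Each disk is conjugation-invariant and $\Phi_\delta$ has real coefficients, so each zero is real. Since $\Phi_\delta$ is odd, the zero near $0$ is $0$ itself, and the others come in pairs $\pm$. For the precise locations I would use the real equation $\tan(2\pi x)=2\pi\delta x$ near $x_0=\frac12$ and $x_0=1$. Writing $x=x_0+\eta$ gives $2\pi\eta+O(\eta^3)=2\pi\delta(x_0+\eta)$, hence $\eta=\delta x_0+O(\delta^2)$. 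This yields $\frac12+\frac\delta2+O(\delta^2)$ and $1+\delta+O(\delta^2)$. Equivalently, one can apply the implicit function theorem to $F(x,\delta)=\sin(2\pi x)-2\pi\delta x\cos(2\pi x)$ at $(x_0,0)$, where $\partial_xF=2\pi\cos(2\pi x_0)\neq0$ and $\partial_\delta F=-2\pi x_0\cos(2\pi x_0)$.

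The main obstacle is the lower bound on $\T$, because two model zeros of $\sin(2\pi z)$ lie exactly on $\T$ at $\pm1$. The displaced zeros $\pm(1+\delta+O(\delta^2))$ sit at distance $\asymp\delta$ from $\T$, and all other zeros are at distance $\gg1$. I would therefore factor
\[
\Phi_\delta(z)=\bigl(z^2-\zeta_\delta^2\bigr)\,g_\delta(z),\qquad \zeta_\delta=1+\delta+O(\delta^2),
\]
on a fixed neighborhood of $\T$ inside $|z|<5/4$, for instance $\frac34\le|z|\le\frac98$. The cofactor $g_\delta$ is holomorphic, zero-free on this annulus, and converges uniformly to $\sin(2\pi z)/(z^2-1)$. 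By the maximum principle applied to $1/g$ on small disks around $\pm1$, the limit is bounded away from zero on the annulus, so $|g_\delta|\gg1$ there for small $\delta$. On $\T$, $|z^2-\zeta_\delta^2|=|z-\zeta_\delta||z+\zeta_\delta|\ge(\zeta_\delta-1)\cdot1\gg\delta$. Therefore $\min_{\T}|\Phi_\delta|\gg\delta$, and in particular $\Phi_\delta$ has no zero on $\T$.

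Since $\zeta_\delta>1$ strictly for small $\delta>0$ and the zeros near $\pm\frac12$ lie inside $\D$, this completes the picture. In $|z|<5/4$, $\Phi_\delta$ has exactly the five listed simple real zeros, three in $\D$ and two just outside $\T$.
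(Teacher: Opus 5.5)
Your proposal is correct, and most of it matches the paper in substance. Both arguments use Rouch\'e against $\sin(2\pi z)$ on small conjugation-invariant disks, oddness to pin the zero at $0$, and a first-order expansion at $a\in\{\pm\frac12,\pm1\}$ for the locations; your $\tan$ equation or implicit function theorem reproduces the latter. The only organizational difference in that part is that you get the total of five from Rouch\'e on $|z|=5/4$, whereas the paper gets it from zero-freeness of $\Phi_\delta$ on the complement of the five disks.

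The genuinely different step is the bound $\min_{\T}|\Phi_\delta|\gg\delta$. The paper expands directly on the circle near $\vartheta=0$:
\[
\Phi_\delta(e^{i\vartheta})=2\pi(i\vartheta-\delta)+O(\vartheta^2+\delta|\vartheta|+\delta^2),
\]
and likewise near $\vartheta=\pi$. This gives $|\Phi_\delta(e^{i\vartheta})|\gg|\vartheta|+\delta$, and compactness handles the rest of $\T$. That route is self-contained and does not use the zero locations.

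You instead divide out the displaced zeros, writing $\Phi_\delta=(z^2-\zeta_\delta^2)g_\delta$, and read the margin off $|z^2-\zeta_\delta^2|\ge\zeta_\delta^2-1\gg\delta$ on $\T$. This makes explicit that the boundary margin is exactly the distance of $\pm\zeta_\delta$ from $\T$. The cost is that you need a lower bound for the cofactor that is uniform in $\delta$.

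On that point your justification is slightly misdirected. The limit $g_0=\sin(2\pi z)/(z^2-1)$ is bounded below on the closed annulus simply by compactness, since it is continuous and zero-free there. What the maximum principle is actually needed for is uniformity in $\delta$ near $\pm1$. Apply it to $1/g_\delta$ on the disks $|z\mp1|\le\rho$. There $g_\delta$ is zero-free, because the only zero $\pm\zeta_\delta$ of $\Phi_\delta$ in each disk has been divided out. On the boundary circles, $|g_\delta|=|\Phi_\delta|/|z^2-\zeta_\delta^2|$ is bounded below by a constant depending only on $\rho$, from the Rouch\'e margin. Alternatively, apply the maximum principle to $g_\delta-g_0$ to get uniform convergence. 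With that adjustment the argument is complete.
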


\begin{proof}
The zeros of $\sin(2\pi z)$ in $|z|<5/4$ are
$-1,-1/2,0,1/2,1$, all simple and separated by a fixed distance.  Choose pairwise
disjoint conjugation-invariant disks $D_a$ around
$a\in\{-1,-1/2,0,1/2,1\}$, all contained in $|z|<5/4$.  On each boundary
$\partial D_a$ the function $\sin(2\pi z)$ is bounded away from zero, while the
perturbation $-2\pi\delta z\cos(2\pi z)$ is $O(\delta)$.  Hence, for
$\delta$ small, Rouch\'e's theorem gives exactly one zero of $\Phi_\delta$ in each
$D_a$, counted with multiplicity.  On the compact complement of these disks in
$\{|z|\le5/4\}$, the same lower bound for $|\sin(2\pi z)|$ shows that there
are no further zeros of $\Phi_\delta$ in $|z|<5/4$.

Taylor expansion at a nonzero point $a\in\{\pm1/2,\pm1\}$ gives
\[
 0=2\pi\cos(2\pi a)(z-a)-2\pi\delta a\cos(2\pi a)
   +O((z-a)^2+\delta|z-a|).
\]
The unique zero in $D_a$ tends to $a$ as $\delta\to0$, so this equation first
gives $z-a=O(\delta)$; substituting this estimate back gives
$z-a=\delta a+O(\delta^2)$.  Since $D_a$ is conjugation-invariant and the zero
in it is unique, that zero is real; moreover
$\Phi_\delta'(a+\delta a+O(\delta^2))=2\pi\cos(2\pi a)(1+O(\delta))\ne0$, so it is
simple.  The zero at the origin remains exactly at the origin by oddness and is
simple because $\Phi_\delta'(0)=2\pi(1-\delta)$.  This proves the displayed
locations and the asserted simplicity and reality.  Since the circle $|z|=5/4$
stays a fixed positive distance from the zero set $\frac12\Z$ of
$\sin(2\pi z)$, the same compactness argument gives
$\min_{|z|=5/4}|\Phi_\delta(z)|\gg1$ for small $\delta$.

It remains only to record the boundary separation on $\T$.  Away from small arcs around
$\pm1$ on $\T$, the preceding compactness argument gives a positive lower bound.
Near $1$, write $z=e^{i\vartheta}$.  The expansion
\[
 \Phi_\delta(e^{i\vartheta})
 =2\pi(i\vartheta-\delta)
  +O(\vartheta^2+\delta|\vartheta|+\delta^2)
\]
gives $|\Phi_\delta(e^{i\vartheta})|\gg |\vartheta|+\delta$ for
$|\vartheta|$ and $\delta$ small.  The same argument at $-1$ gives the identical
bound.  Hence the minimum on $\T$ is $\gg\delta$.
\end{proof}

We now return to the global derivative-order notation $m$; thus
$c_r=c_{m,r}$ and $\mathfrak d_{k,m}$ is given by
\eqref{eq:slow-edge-displacement} with $\mu=m$.

\newpage
\begin{theorem}[The upper endpoint]
\label{thm:alpha-one-endpoint}
Fix $\mathfrak b>0$.  There is $K_{\mathfrak b}\ge1$ such that, for every even
$k\ge K_{\mathfrak b}$, every integer $1\le m\le \mathfrak b\log k$, and every
normalized level-one Hecke eigenform $f$, the odd part $Q_{f,m}^{-}$ has a
simple zero at $0$ and four further simple zeros
\[
 \pm b_{f,m,k},\qquad \pm b_{f,m,k}^{-1},\qquad 0<b_{f,m,k}<1,
\]
forming a single real reciprocal quartet; all remaining zeros are simple and
lie on $\T$.  Moreover, uniformly in the same range as $k\to\infty$,
\[
 b_{f,m,k}=\frac12+\frac{\mathfrak d_{k,m}}2+o_{\mathfrak b}(\mathfrak d_{k,m}),
 \qquad
 b_{f,m,k}^{-1}=2-2\mathfrak d_{k,m}+o_{\mathfrak b}(\mathfrak d_{k,m}).
\]
\end{theorem}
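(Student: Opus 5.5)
The proof follows the pattern of the upper separated phase in Theorem~\ref{thm:separated-moving-phase}. The zeroth-order sine model is replaced by the perturbed model $\Phi_\delta$ with $\delta=\mathfrak d_{k,m}$. By Proposition~\ref{prop:slow-first-order-edge}, $c_0\ne0$ and
\[
 p(z)=\Phi_{\mathfrak d_{k,m}}(z)+o_{\mathfrak b}(\mathfrak d_{k,m})
\]
uniformly on $|z|\le 5/4$, say with $R=3$; here $p=p_{f,m,k}$ and $P=P_{f,m,k}$. Since $\mathfrak d_{k,m}\to0$ uniformly, Lemma~\ref{lem:upper-endpoint-model-zeros} applies. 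On $\T$ the model satisfies $|\Phi_\delta|\gg\delta$, while the error is $o(\delta)$. Rouch\'e's theorem on $\T$ then shows that $p$ has exactly three zeros in $\D$, near $0$ and $\pm(\tfrac12+\tfrac\delta2)$, and none on $\T$. Cauchy's estimate gives $p'(0)=2\pi+O(\delta)\ne0$. Lemma~\ref{lem:odd-part-completion} then gives that $P$ has degree $N-1$ ($N=2M$) and a simple zero at $0$, and that $Q^-_{f,m}$ is a nonzero real multiple of $P$.

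The next step localizes the inner zeros of $P$. Take conjugation-invariant disks of radius $\varepsilon\delta$ around $\pm\beta$, where $\beta=\tfrac12+\tfrac\delta2$ and $\varepsilon>0$ is small but fixed. Since $\Phi_\delta'(\pm\tfrac12)\approx-2\pi$, on these boundaries $|\Phi_\delta|\gg\varepsilon\delta$. The edge error is $o(\delta)$, which is smaller than $\varepsilon\delta$ for large $k$. The reflected term is handled separately: on these disks $|z|\le 0.6$ and $1/z$ stays in $|w|\le 3$. Hence $|z^Np(1/z)|\ll 0.6^N$, which is exponentially small in $k$. Because $\delta\gg_{\mathfrak b}1/(k\log k)$, this is $o(\varepsilon\delta)$. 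Rouch\'e's theorem then gives exactly one zero of $P$ in each disk, and this zero is real because the disk is conjugation invariant. Letting $\varepsilon\to0$ slowly gives $b=\tfrac12+\tfrac{\mathfrak d}{2}+o_{\mathfrak b}(\mathfrak d)$. Inverting gives
\[
 b^{-1}=2-2\mathfrak d+o_{\mathfrak b}(\mathfrak d),
\]
and signed reciprocity \eqref{eq:P-reciprocity} supplies the outer pair $\pm b^{-1}$.

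The count closes exactly as in the separated phase. Lemma~\ref{lem:disk-zero-circle-count} with $r=3$ gives at least $N-6$ distinct zeros on $\T$. Together with the simple origin and the four quartet zeros, this exhausts the degree $N-1$. Hence all zeros are simple, and all nonzero zeros other than the quartet lie on $\T$.

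The main obstacle is the uniformity of the error $o_{\mathfrak b}(\mathfrak d_{k,m})$ relative to the small margin $\delta$ on $\T$, near $\pm1$. This is exactly the content of Proposition~\ref{prop:slow-first-order-edge} and its Bell-polynomial appendix, which I am assuming. With that in hand, the only delicate check is that the margin $\gg\delta$ in Lemma~\ref{lem:upper-endpoint-model-zeros} is uniform in $\delta$ as $\delta\to0$. Its expansion $2\pi(i\vartheta-\delta)$ near $\pm1$ shows that it is.
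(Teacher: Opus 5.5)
Your proposal is correct and follows the paper's proof essentially step for step. The shared steps are: the first-order model $\Phi_\delta$ with $\delta=\mathfrak d_{k,m}$; Rouch\'e for $P$ on disks of radius $\varepsilon\delta$ about $\pm(\tfrac12+\tfrac\delta2)$, where the reflected term is $O((3/5)^N)=o(\delta)$ because $\delta\gg_{\mathfrak b}(k\log k)^{-1}$; reality from conjugation invariance; the outer pair from signed reciprocity; and the degree count via Lemma~\ref{lem:disk-zero-circle-count} with $r=3$. The one difference is minor. You get ``three zeros of $p$ in $\D$, none on $\T$'' from a single Rouch\'e comparison on $\T$, using $\min_{\T}|\Phi_\delta|\gg\delta$ and the fact that $\delta>0$ puts the model zeros $\pm(1+\delta+O(\delta^2))$ outside $\overline{\D}$. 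The paper instead counts on $|z|=5/4$ and then localizes all five zeros of $p$.
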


\begin{proof}
\smallskip
\noindent\emph{Zeros of the edge polynomial.}
By Proposition~\ref{prop:slow-first-order-edge}, the normalizing coefficient
$c_0=\mathcal D_{f,m}(k-1)$ is nonzero for all sufficiently large $k$.  Hence
$p_{f,m,k}$ and $P_{f,m,k}$ are defined.  Write
$p=p_{f,m,k}$ and $P=P_{f,m,k}$, put $\delta=\mathfrak d_{k,m}$, and set
$\Phi_\delta(z)=\sin(2\pi z)-2\pi\delta z\cos(2\pi z)$.  The same proposition gives,
uniformly for $|z|\le5/4$,
\[
 p(z)=\Phi_\delta(z)+o_{\mathfrak b}(\delta).
\]
By Cauchy's estimate on a fixed small circle around the origin, the same
compact-uniform estimate gives
\[
 p'(0)=\Phi_\delta'(0)+o_{\mathfrak b}(\delta)
       =2\pi+O_{\mathfrak b}(\delta).
\]
Lemma~\ref{lem:upper-endpoint-model-zeros}, followed by Rouch\'e's theorem on
$|z|=5/4$, shows first that $p$ and $\Phi_\delta$ have the same total
number, namely five, of zeros in $|z|<5/4$.

Fix $\varepsilon_*>0$.  Around each of the five model zeros in this disk, take
a disk of radius $\varepsilon_*\delta$; for all sufficiently large $k$ these
disks are pairwise disjoint.  Simplicity of the model zeros and their
expansions in Lemma~\ref{lem:upper-endpoint-model-zeros} give
\[
 |\Phi_\delta(z)|\gg_{\varepsilon_*}\delta
\]
on the boundary of every such disk.  Since
$p-\Phi_\delta=o_{\mathfrak b}(\delta)$ uniformly there, Rouch\'e's theorem
gives exactly one zero of $p$ in each disk.  Since $p$ is odd, the zero in the
central disk is exactly $0$.  Thus, uniformly in $m$ and $f$, the five zeros
are
\[
 0,
 \qquad
 \pm\left(\frac12+\frac{\delta}{2}+O(\delta^2)+O(\varepsilon_*\delta)\right),
 \qquad
 \pm\left(1+\delta+O(\delta^2)+O(\varepsilon_*\delta)\right).
\]
Because each zero lies within distance $\varepsilon_*\delta$ of the
corresponding model zero, its normalized deviation from the corresponding
first-order location has limsup at most $\varepsilon_*$, uniformly in $m$ and
$f$.  Letting $\varepsilon_*\downarrow0$ gives
\[
 0,
 \qquad
 \pm\left(\frac12+\frac{\delta}{2}+o_{\mathfrak b}(\delta)\right),
 \qquad
 \pm\left(1+\delta+o_{\mathfrak b}(\delta)\right).
\]
Each localization disk is invariant under conjugation and contains exactly one
zero of the real polynomial $p$; hence all five localized zeros are real.  The
lower bound on $\T$ in Lemma~\ref{lem:upper-endpoint-model-zeros}, together with
$p-\Phi_\delta=o_{\mathfrak b}(\delta)$, shows that $p$ has no zero on $\T$.
For all sufficiently large $k$, the pair near
$\pm(1+\delta)$ lies outside $\overline{\D}$, whereas the origin and the pair
near $\pm(1/2+\delta/2)$ lie in $\D$.  Hence $p$ has exactly three zeros in
$\D$ and no zero on $\T$.

\smallskip
\noindent\emph{Passage to the signed-reciprocal completion.}
Let $N=2M$.  For all sufficiently large $k$, one has $M\ge2$; since
$p'(0)\ne0$, Lemma~\ref{lem:odd-part-completion} shows that $P\not\equiv0$,
that $P$ has degree $N-1$, and that the origin is a simple zero of $P$.  Also
$3\le N/2$, so Lemma~\ref{lem:disk-zero-circle-count}, applied with $r=3$,
gives at least $N-6$ distinct unit-circle zeros of $P$.  Consider the disk
$D_+$ centered at
$1/2+\delta/2$ with radius $\varepsilon_{\mathrm{comp}}\delta$, where
$\varepsilon_{\mathrm{comp}}>0$ is fixed and small; define $D_-=-D_+$.  The corresponding model
zeros differ from the disk centers by
$O(\delta^2)=o(\varepsilon_{\mathrm{comp}}\delta)$.  Hence, on
$\partial D_+\cup\partial D_-$, the model-zero expansion gives
$|\Phi_\delta(z)|\gg_{\varepsilon_{\mathrm{comp}}}\delta$, and the first-order edge estimate
therefore gives $|p(z)|\gg_{\varepsilon_{\mathrm{comp}}}\delta$.  By
Proposition~\ref{prop:slow-first-order-edge} with $R=3$, we have
$p(w)=O_{\mathfrak b}(1)$ for $|w|\le3$.  For
$z\in D_+\cup D_-$ and large $k$, $|1/z|\le3$ and $|z|\le3/5$, hence
\[
 z^Np(1/z)=O_{\mathfrak b}((3/5)^N)=o_{\mathfrak b}(\delta),
\]
because $\delta\gg_{\mathfrak b}(k\log k)^{-1}$.  Rouch\'e's theorem therefore
gives one zero of $P$ in $D_+$ and one in $D_-$.  The disks are invariant under
conjugation and $P$ has real coefficients, so these unique zeros are real.
For every fixed $\varepsilon_{\mathrm{comp}}>0$, their normalized deviations from
$\pm(1/2+\delta/2)$ have limsup at most $\varepsilon_{\mathrm{comp}}$, uniformly in $m$ and
$f$; letting $\varepsilon_{\mathrm{comp}}\downarrow0$ gives the two inner zeros
\[
 \pm\left(\frac12+\frac\delta2+o_{\mathfrak b}(\delta)\right).
\]
Signed reciprocity supplies their reciprocal partners, whose positive member is
\[
 \left(\frac12+\frac\delta2+o_{\mathfrak b}(\delta)\right)^{-1}
 =2-2\delta+o_{\mathfrak b}(\delta).
\]
The simple zero at the origin, the two localized inner zeros,
and their two reciprocal partners give five distinct zeros.  Together with the
at least $N-6$ distinct unit-circle zeros, they give $N-1$ distinct zeros and
therefore exhaust the degree of $P$.  Hence these are all the zeros of $P$ and
all are simple.  Lemma~\ref{lem:odd-part-completion} transfers the conclusion
to $Q_{f,m}^{-}$.
\end{proof}

\section{The derivative-order transition and the large-weight theorem}
\label{sec:derivative-axis-consequences}

We convert the phase parameter $\alpha_{k,m}$ to the critical derivative order
$m_c(k)=(k-1)\log((k-1)/\pi)$, quantify the logarithmic collision window,
and then assemble the separated, critical, and endpoint regimes into the
uniform large-weight theorem.

\subsection{Macroscopic transition and the logarithmic collision window}

\begin{proof}[Proof of Theorem~\ref{thm:intro-logarithmic-transition}]
Write $s=k-1$ and let $x_0=x_0(k,m)$.  The saddle equation gives
$x_0\ge\log(s/(2\pi))\asymp\log s$.  Since $m=O(s\log s)$, it follows that
$m/x_0=O(s)$, and the same equation then gives $e^{x_0}=O(s)$.  Thus
\[
 \log\frac{s}{2\pi}\le x_0\le \log(Cs)
\]
for some constant $C=C(\theta)$ and all large $k$, and hence
$x_0/\log(s/\pi)\to1$.  Therefore
\[
 \frac{m}{sx_0}
 =\frac{m}{s\log(s/\pi)}\frac{\log(s/\pi)}{x_0}
 \longrightarrow\theta,
\]
and consequently
\[
 \alpha_{k,m}=\frac{s}{s+m/x_0}\longrightarrow\frac1{1+\theta}.
\]
For $\theta<1$ this limit is strictly larger than $1/2$ and strictly smaller
than $1$, so the upper separated phase in
Theorem~\ref{thm:separated-moving-phase} applies and gives the displayed limits.
For $\theta>1$ the limit is strictly smaller than $1/2$ and positive, so the
lower separated phase applies.  For $\theta=1$ the parameter tends to $1/2$, and
Theorem~\ref{thm:critical-window-counting} gives nonvanishing and the stated
critical classification.  Since $\alpha_{k,m}$ depends only on $k$ and $m$, and
all cited phase theorems are uniform over normalized eigenforms, the conclusions
are uniform in $f$.
\end{proof}

We next prove the collar lemma deferred from Section~\ref{sec:moving-phases}.
It converts distance from $m_c(k)$ on the derivative-order axis into the boundary
margin $|\alpha_{k,m}-1/2|$.

\begin{proof}[Proof of Lemma~\ref{lem:critical-collar-width}]
Set
\[
 s=k-1,\qquad x_c=\log\frac{s}{\pi},\qquad
 m_c(k)=sx_c,\qquad \Delta_k=m-sx_c.
\]
Let $x_0$ be the saddle associated with $m$, and write
\[
 \mathscr M_s(x)=x(2\pi e^x-s),\qquad \alpha_s(x)=\frac{s}{2\pi e^x}.
\]
Then $\mathscr M_s(x_c)=m_c(k)$ and $\alpha_s(x_c)=1/2$.  For the two
one-sided assertions it suffices to argue along arbitrary subsequences.  Under
either one-sided hypothesis, after passing to a further subsequence, either
$|\Delta_k|\ge \xi s x_c$ for some fixed $\xi>0$, or
$|\Delta_k|=o(sx_c)$.

\smallskip
\noindent\emph{Macroscopic separation.}
In the first case the parameter is already separated from the critical value.
Indeed, if
$\Delta_k\ge \xi s x_c$, then monotonicity in Lemma~\ref{lem:alpha-monotone-m}
gives
\[
 \alpha_{k,m}\le \alpha_{k,(1+\xi)sx_c}.
\]
Put $m_+=(1+\xi)sx_c$, and let $x_+$ be its saddle.  Then
$x_+/x_c\to1$.  Indeed, the saddle equation gives
$x_+\ge x_c-\log2$, whereas $x_+\ge(1+\varepsilon)x_c$ for some fixed
$\varepsilon>0$ would make $2\pi e^{x_+}/s$ grow like $s^\varepsilon$,
contradicting $m_+\asymp sx_c$.  Consequently,
\[
 \alpha_{k,m_+}
 =\frac{1}{1+m_+/(sx_+)}
 =\frac{1}{1+(1+\xi)x_c/x_+}
 \longrightarrow \frac{1}{2+\xi}<\frac12.
\]
Similarly, on the pre-critical side, after decreasing $\xi$ if necessary so
that $0<\xi<1$, put $m_-=(1-\xi)sx_c$ and let $x_-$ be its saddle.  The same
argument gives $x_-/x_c\to1$, and $\Delta_k\le-\xi s x_c$ yields
\[
\begin{aligned}
 \alpha_{k,m}&\ge \alpha_{k,m_-},\\
 \alpha_{k,m_-}
 &=\frac{1}{1+m_-/(sx_-)}
  =\frac{1}{1+(1-\xi)x_c/x_-},\\
 &\longrightarrow \frac{1}{2-\xi}>\frac12.
\end{aligned}
\]
Thus $\alpha_{k,m}$ is separated from $1/2$ on the corresponding side, and the
asserted $k$-growth is immediate.

\smallskip
\noindent\emph{Near-critical linearization.}
In the second case, $|\Delta_k|=o(sx_c)$.  We first note that
the displacement $t:=x_0-x_c$ satisfies $t=o(1)$.  Indeed,
\[
 \mathscr M_s''(x)=2\pi e^x(2+x)>0\qquad(x>0),
\]
so $\mathscr M_s'$ is increasing.  For any fixed $\varepsilon>0$,
\[
 t\ge\varepsilon
 \quad\Longrightarrow\quad
 \Delta_k=\int_{x_c}^{x_c+t}\mathscr M_s'(u)\,du
 \ge t\mathscr M_s'(x_c)
 \ge \varepsilon \mathscr M_s'(x_c)
 =\varepsilon s(1+2x_c),
\]
whereas, for $0<\varepsilon<1$,
\[
 t\le-\varepsilon
 \quad\Longrightarrow\quad
 -\Delta_k\ge \int_{x_c-\varepsilon}^{x_c}\mathscr M_s'(u)\,du
 \ge \varepsilon \mathscr M_s'(x_c-\varepsilon)\gg_\varepsilon sx_c.
\]
Both alternatives contradict $|\Delta_k|=o(sx_c)$.

Taylor expansion at $x_c$ now gives
\[
 \mathscr M_s'(x_c)=s(1+2x_c),\qquad \mathscr M_s''(x_c)=2s(2+x_c),
\]
and hence, if $x_0=x_c+t$,
\[
 \Delta_k=s(1+2x_c)t+O(sx_c t^2).
\]
Since $t=o(1)$ and $s(1+2x_c)\asymp sx_c$, this relation first gives
$t=O(|\Delta_k|/(sx_c))$; substituting this bound into the quadratic remainder
yields
\[
 t=\frac{\Delta_k}{s(1+2x_c)}
   +O\left(\frac{\Delta_k^2}{s^2x_c^2}\right).
\]
Also
\[
 \alpha_s(x_c+t)=\frac12 e^{-t}=\frac12-\frac t2+O(t^2).
\]
Combining the two estimates gives \eqref{eq:alpha-critical-linearization}.
Under either one-sided near-critical hypothesis, the error term in that formula
is lower order than its linear term.  Since $s\sim k$ and
$x_c\sim\log k$, we therefore have
\[
 k\left|\alpha_{k,m}-\frac12\right|
 \asymp \frac{|\Delta_k|}{\log k}\longrightarrow\infty.
\]
The sign of $\alpha_{k,m}-1/2$ is opposite to the sign of $\Delta_k$, which gives
the two asserted one-sided conclusions.
\end{proof}

\begin{corollary}[Super-logarithmic separation from the critical order]
\label{cor:logarithmic-window}
Let $k\to\infty$ through even weights, let $m=m(k)\in\Z_{\ge1}$, and set
\[
 m_c(k)=(k-1)\log\frac{k-1}{\pi}.
\]
The following conclusions hold uniformly over normalized level-one Hecke
eigenforms.

If
\begin{equation}
 \frac{m_c(k)-m(k)}{\log k}\to+\infty,
 \label{eq:precritical-log-window}
\end{equation}
then, for all sufficiently large $k$, $Q_{f,m}^{-}$ has a simple zero at $0$,
and its nonzero off-circle zeros form exactly one real reciprocal quartet whose four
zeros are simple.  All remaining nonzero zeros are simple and lie on $\T$.  In
the near-critical subrange
$m_c(k)-m(k)=o(k\log k)$, the inner positive zero satisfies
\begin{equation}
 1-b_{f,m,k}\sim
 \frac{m_c(k)-m(k)}{(k-1)\bigl(1+2\log((k-1)/\pi)\bigr)}.
 \label{eq:log-window-inner-zero}
\end{equation}

If
\begin{equation}
 \frac{m(k)-m_c(k)}{\log k}\to+\infty,
 \label{eq:postcritical-log-window}
\end{equation}
then, for all sufficiently large $k$, the origin is simple and every nonzero
zero of $Q_{f,m}^{-}$ is simple and lies on $\T$.
\end{corollary}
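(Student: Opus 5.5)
The plan is to reduce everything to the phase parameter $\alpha_k:=\alpha_{k,m(k)}$ and then apply the phase theorems already established. Throughout, $\Delta_k=m-m_c(k)$. I would argue along arbitrary subsequences: every subsequence has a further subsequence on which either $|\Delta_k|\ge \xi k\log k$ for some fixed $\xi>0$, or $|\Delta_k|=o(k\log k)$. Proving the conclusion along each such sub-subsequence gives it for the full sequence. Uniformity in $f$ is automatic, because $\alpha_{k,m}$ depends only on $(k,m)$ and every cited theorem is uniform over normalized eigenforms.

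\emph{Post-critical side.} Suppose $\Delta_k/\log k\to+\infty$. Lemma~\ref{lem:critical-collar-width} gives $\alpha_k<1/2$ eventually and $k(1/2-\alpha_k)\to\infty$, i.e.\ $E_k=o(1/2-\alpha_k)$. For the lower separated phase I also need $E_k=o(\alpha_k)$. If $k\alpha_k\to\infty$ along the subsequence, Theorem~\ref{thm:separated-moving-phase} applies directly. Otherwise, pass to a further subsequence with $\alpha_k\le\mathfrak a/k$ for a fixed $\mathfrak a$ and apply Theorem~\ref{thm:alpha-zero-endpoint}. The subsequence principle combines the two cases, and both give a simple origin with all other zeros simple and on $\T$.

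\emph{Pre-critical side.} Suppose $-\Delta_k/\log k\to+\infty$. The collar lemma gives $\alpha_k>1/2$ and $k(\alpha_k-1/2)\to\infty$. The remaining degeneracy is $\alpha_k\to1$ with $k(1-\alpha_k)$ bounded, and I would show that this forces $m=O(\log k)$. From the saddle equation, $1/\alpha_k-1=m/(sx_0)$ with $x_0\asymp\log k$. Hence $k(1-\alpha_k)=O(1)$ implies $m\ll x_0\ll\log k$, say $m\le\mathfrak b\log k$. On such subsequences there are two cases. If $m\ge1$, apply Theorem~\ref{thm:alpha-one-endpoint}. If $m=0$, this is excluded by hypothesis, since $m(k)\ge1$; for $m=0$ the result would instead follow from the classical Lemma~\ref{lem:mzero-classical-period}. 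On the complementary subsequences, where $k(1-\alpha_k)\to\infty$, the upper separated phase of Theorem~\ref{thm:separated-moving-phase} applies. In all cases the conclusion is exactly one real reciprocal quartet, simple zeros everywhere, and everything else on $\T$.

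\emph{Location formula.} Now assume $-\Delta_k=o(k\log k)$ and $-\Delta_k/\log k\to\infty$. Here $\alpha_k\to1/2$, so we are in the upper separated phase, and \eqref{eq:separated-quartet-location} gives $b=(2\alpha_k)^{-1}+O(1/k)$. Therefore $1-b=(2\alpha_k-1)/(2\alpha_k)+O(1/k)=(2\alpha_k-1)(1+o(1))+O(1/k)$. The linearization \eqref{eq:alpha-critical-linearization} gives $2\alpha_k-1=-\Delta_k/(s(1+2x_c))\,(1+o(1))$, since its quadratic error is $O(\Delta_k^2/(s^2x_c^2))$, which is $o$ of the linear term because $|\Delta_k|=o(sx_c)$. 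It remains to absorb the $O(1/k)$ error, which requires $|\Delta_k|/(s x_c)\gg 1/k$ with room to spare. This holds because $|\Delta_k|/(k\log k)$ is much larger than $1/k$, exactly by the hypothesis $|\Delta_k|/\log k\to\infty$. This yields \eqref{eq:log-window-inner-zero}.

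I expect the only delicate step to be the endpoint bookkeeping: verifying that bounded $k(1-\alpha_k)$ forces the logarithmic range where Theorem~\ref{thm:alpha-one-endpoint} is available, and that bounded $k\alpha_k$ is covered by Theorem~\ref{thm:alpha-zero-endpoint}. Both follow from the saddle equation $1/\alpha_k=1+m/(sx_0)$ together with $x_0\asymp\log k$.
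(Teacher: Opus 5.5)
Your proposal is correct and follows essentially the same route as the paper: the collar lemma, then subsequence exhaustion into the separated phases and the two endpoint theorems, with the location formula obtained from $b=(2\alpha_k)^{-1}+O(1/k)$ together with the linearization \eqref{eq:alpha-critical-linearization}. The only difference is cosmetic. On the pre-critical side you split on whether $k(1-\alpha_k)$ stays bounded and deduce $m=O(\log k)$, whereas the paper splits on $m=O(\log k)$ versus $m/\log k\to\infty$ and deduces $k(1-\alpha_k)\to\infty$; this is the same estimate $k(1-\alpha_k)\asymp m/x_0$ read in the other direction.
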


\begin{proof}
Write $\alpha_k:=\alpha_{k,m(k)}$.

\smallskip
\noindent\emph{The pre-critical phase.}
Assume first \eqref{eq:precritical-log-window}.  Lemma~\ref{lem:critical-collar-width} gives
\[
 \alpha_k>\frac12,
 \qquad
 k\bigl(\alpha_k-\tfrac12\bigr)\to+\infty.
\]
We rule out an infinite subsequence on which the asserted pre-critical zero
pattern fails.  On any such subsequence, either a further subsequence has
$m=O(\log k)$, in which case the upper endpoint theorem
(Theorem~\ref{thm:alpha-one-endpoint}) gives the stated quartet conclusion, or a
further subsequence has $m/\log k\to+\infty$.  In the latter case,
$k(1-\alpha_k)\to+\infty$.  Indeed, the pre-critical inequality gives
$m<m_c(k)\ll k\log k$, and always $x_0\ge\log((k-1)/(2\pi))$.  Hence
$m/x_0\ll k$, so the saddle equation gives $e^{x_0}\ll k$ and therefore
$x_0=O(\log k)$.  With $u=m/x_0$ and $m/\log k\to+\infty$, we get
$u\to+\infty$, and
\[
 1-\alpha_k=\frac{u}{k-1+u}.
\]
Since $u\ll k$ and $u\to+\infty$, this identity gives
$k(1-\alpha_k)\asymp u\to+\infty$.  Together with
$k(\alpha_k-1/2)\to+\infty$, this is the upper separated phase condition.
Theorem~\ref{thm:separated-moving-phase} again gives exactly one real reciprocal
quartet, all four of whose zeros are simple.  Thus the pre-critical assertion
holds for all sufficiently large $k$.

\smallskip
\noindent\emph{The near-critical location formula.}
If, in addition, $m_c(k)-m(k)=o(k\log k)$, then
Lemma~\ref{lem:critical-collar-width} gives
\[
 \alpha_k-\frac12\sim
 \frac{m_c(k)-m(k)}{2(k-1)\bigl(1+2\log((k-1)/\pi)\bigr)}.
\]
In this subrange, $m=m_c(k)+o(k\log k)\sim k\log k$, so
$m/\log k\to+\infty$; hence the upper separated-phase hypotheses have already
been verified above.  The separated location formula gives
\[
 b_{f,m,k}=\frac1{2\alpha_k}+O(k^{-1}).
\]
The two assumptions imply $\alpha_k-1/2\to0$, while
\eqref{eq:precritical-log-window} gives
$k(\alpha_k-1/2)\to+\infty$.  Hence the $O(k^{-1})$ term is
$o(\alpha_k-1/2)$, and
\[
 1-b_{f,m,k}\sim 2(\alpha_k-1/2).
\]
This proves \eqref{eq:log-window-inner-zero}.

\smallskip
\noindent\emph{The post-critical phase.}
Assume now \eqref{eq:postcritical-log-window}.  Lemma~\ref{lem:critical-collar-width}
gives
\[
 \alpha_k<\frac12,
 \qquad
 k\bigl(\tfrac12-\alpha_k\bigr)\to+\infty.
\]
Again suppose that the asserted post-critical conclusion fails infinitely often.
On a failing subsequence, either a further subsequence satisfies
$\alpha_k\le \mathfrak a/k$ for some fixed $\mathfrak a>0$, in which case the
lower endpoint theorem, Theorem~\ref{thm:alpha-zero-endpoint}, gives the
unit-circle conclusion, or a further subsequence has
$k\alpha_k\to+\infty$.  In the latter case the lower separated phase in
Theorem~\ref{thm:separated-moving-phase} applies.  Both alternatives contradict
failure, so the post-critical assertion holds for all sufficiently large $k$.
\end{proof}

\begin{remark}[Collision-window width]
Corollary~\ref{cor:logarithmic-window} does not determine the internal scaling
law when $m-m_c(k)=O(\log k)$.  Equivalently, after passing to subsequences, all
regimes not already resolved by the corollary are confined to
\[
 |m-m_c(k)|=O(\log k).
\]
Along every super-logarithmically separated pre-critical sequence the real
quartet is present, while along every such post-critical sequence it is absent.
\end{remark}

\begin{remark}[The upper-endpoint regime $m/m_c(k)\to0$]
Suppose that $m/m_c(k)\to0$.  After passing to subsequences, the positive
derivative orders are covered by two mechanisms.  If $m/\log k\to\infty$, then
the upper separated phase applies and the exceptional quartet satisfies
\[
 b_{f,m,k}\to\frac12,
 \qquad b_{f,m,k}^{-1}\to2.
\]
If $m=O(\log k)$, then along the subsequence there is a fixed $C>0$ such that
$1\le m\le C\log k$, and the upper endpoint theorem gives the sharper
first-order displacement
\[
 b_{f,m,k}=\frac12+\frac12\mathfrak d_{k,m}+o_C(\mathfrak d_{k,m}),
 \qquad
 \mathfrak d_{k,m}=\frac{m\psi'(k-1)}{\psi(k-1)-\log(2\pi)},
\]
where $\psi=\Gamma'/\Gamma$ is the digamma function.  The ordinary case $m=0$
is the classical period-polynomial regime; it is covered by \CFI\ and is
separate from this derivative-side first-order displacement.
\end{remark}

\subsection{A fixed-order consequence}

\begin{corollary}[Fixed-order quartet asymptotics]
Fix an integer $m\ge1$.  For all sufficiently large even weights $k$, uniformly
over normalized level-one Hecke eigenforms $f\in S_k(\SLZ)$, $Q_{f,m}^{-}$ has
the simple zero $0$, and its nonzero off-circle zeros form one real reciprocal quartet
whose four zeros are simple.  All remaining zeros are simple and lie on $\T$.
Moreover the inner positive exceptional zero satisfies
\[
 b_{f,m,k}
 =\frac12+
   \frac{m\psi'(k-1)}{2\bigl(\psi(k-1)-\log(2\pi)\bigr)}
   +o\!\left(\frac{m}{k\log k}\right).
\]
\end{corollary}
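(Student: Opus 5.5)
This corollary is a direct specialization of Theorem~\ref{thm:alpha-one-endpoint}, so the plan is simply to choose the parameter $\mathfrak b$ appropriately and then translate the error term. Fix $m\ge1$ and set $\mathfrak b=m$ (any $\mathfrak b\ge m/\log 2$ would also do). For even $k\ge 3$ one has $1\le m\le \mathfrak b\log k$, so for all even $k\ge K_{\mathfrak b}$ the theorem applies uniformly over normalized level-one Hecke eigenforms. It gives the following zero pattern for $Q^-_{f,m}$:
\begin{itemize}
\item a simple zero at the origin;
\item a single real reciprocal quartet $\{\pm b_{f,m,k},\pm b_{f,m,k}^{-1}\}$ of simple zeros with $0<b_{f,m,k}<1$;
\item all remaining zeros simple and on $\T$.
\end{itemize}
This is exactly the qualitative part of the corollary.

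For the location, the same theorem gives
\[
 b_{f,m,k}=\frac12+\frac{\mathfrak d_{k,m}}2+o_{\mathfrak b}(\mathfrak d_{k,m}).
\]
Substituting the definition \eqref{eq:slow-edge-displacement} reproduces the displayed main term $m\psi'(k-1)/\bigl(2(\psi(k-1)-\log(2\pi))\bigr)$.

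It remains to convert the error term. The paper records $\mathfrak d_{k,\mu}\asymp_{\mathfrak b}\mu/(k\log k)$ uniformly for $1\le\mu\le\mathfrak b\log k$. This also follows directly from the standard asymptotics $\psi'(k-1)\sim 1/k$ and $\psi(k-1)-\log(2\pi)\sim\log k$. Since $m$ and $\mathfrak b$ are fixed, $o_{\mathfrak b}(\mathfrak d_{k,m})=o(m/(k\log k))$, with uniformity in $f$ inherited from the theorem.

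There is no real obstacle here. The only point to check is that the uniformity convention for $o_{\mathfrak b}$, which is uniform in $\mu$ in the range and in $f$, specializes correctly to a single fixed $m$; it plainly does.
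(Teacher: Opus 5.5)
Your proposal is correct and follows the paper's proof: it specializes Theorem~\ref{thm:alpha-one-endpoint} to a fixed $m$ (your choice $\mathfrak b=m$ is admissible, since $1\le m\le m\log k$ for even $k\ge4$). It then converts $o_{\mathfrak b}(\mathfrak d_{k,m})$ into $o\bigl(m/(k\log k)\bigr)$ using $\mathfrak d_{k,m}\asymp m/(k\log k)$, of which only the upper bound is actually needed.
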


\begin{proof}
This is the case $m=O(1)$ of Theorem~\ref{thm:alpha-one-endpoint}.  The
displacement estimate recorded before Proposition~\ref{prop:slow-first-order-edge}
gives
\[
 \mathfrak d_{k,m}\asymp\frac{m}{k\log k}
\]
for fixed $m$, so the uniform $o(\mathfrak d_{k,m})$ term in that theorem is
$o(m/(k\log k))$.
\end{proof}

\subsection{Assembly of the phase regimes}

The preceding results cover complementary ranges of the phase parameter.  A
subsequence exhaustion now assembles them into the theorem uniform in all
integer derivative orders.

\begin{proof}[Proof of Theorem~\ref{thm:intro-main}]
We prove the uniform formulation by contradiction.  If no such $K_0$ existed,
there would be a sequence of even weights $k\to\infty$, derivative orders
$m=m(k)\in\Z_{\ge0}$, and normalized Hecke eigenforms $f\in S_k(\SLZ)$
for which the asserted conclusion fails.

\smallskip
\noindent\emph{The classical and logarithmic-order ranges.}
The case $m=0$ is covered by Lemma~\ref{lem:mzero-classical-period} and the
theorem of \CFI~\cite{ConreyFarmerImamoglu}.  After discarding a subsequence we
may therefore assume $m\ge1$.  Passing to a subsequence, either $m=O(\log k)$ or
$m/\log k\to\infty$.  In the first case
Theorem~\ref{thm:alpha-one-endpoint} contradicts failure of the conclusion.  We
may therefore work on a counterexample subsequence for which
$m/\log k\to\infty$.

\smallskip
\noindent\emph{Reduction to the phase parameter.}
The saddle lower bound gives
\[
 x_0\ge\log\frac{k-1}{2\pi}>1
\]
for all sufficiently large $k$.  Hence $m/x_0\le m$, and the saddle equation
gives
\[
 2\pi e^{x_0}=k-1+\frac{m}{x_0}\le k+m,
 \qquad
 x_0\ll\log(k+m).
\]
Moreover, $m/\log k\to\infty$ implies $m/\log(k+m)\to\infty$: if $m\le k$,
then $\log(k+m)\le\log(2k)$, whereas if $m>k$, then
$\log(k+m)\le\log(2m)$ and $m/\log m\to\infty$.  Therefore, with
$u=m/x_0$, we have $u\to\infty$.  Since
\[
 \alpha_{k,m}=\frac{k-1}{k-1+u},
 \qquad
 1-\alpha_{k,m}=\frac{u}{k-1+u},
\]
it follows that
\[
 k(1-\alpha_{k,m})
 =\frac{ku}{k-1+u}
 \ge\frac12\min\{k,u\}
 \longrightarrow\infty.
\]

\smallskip
\noindent\emph{Classification of the limiting phase.}
Passing to a further subsequence, either
$\alpha_{k,m}\le \mathfrak a/k$ for some fixed $\mathfrak a>0$, in which case
Theorem~\ref{thm:alpha-zero-endpoint} applies, or $k\alpha_{k,m}\to\infty$.  In
the latter case, pass again to a subsequence on which $\alpha_{k,m}$ has a limit
in $[0,1]$.  If the limit is $0$, then the lower separated phase applies because
$k\alpha_{k,m}\to\infty$.  If the limit is $1/2$, then
Theorem~\ref{thm:critical-window-counting} gives the asserted conclusion.  If
the limit is different from $0$ and $1/2$, then the parameter is separated from
$1/2$, and the lower or upper separated phase applies; in the upper case the
preceding estimate supplies the required endpoint margin
$k(1-\alpha_{k,m})\to\infty$.  Every possible counterexample subsequence
therefore contains a further subsequence on which the full conclusion holds, a
contradiction.
\end{proof}

\section{Fixed weight, large derivative order, and finite exceptions}
\label{sec:fixed-weight-large-derivative}

The large-weight direction is now complete.  We fix an even weight $k$ and a
normalized level-one Hecke eigenform $f$, and let $m\to\infty$.  The split-Mellin
moment with parameter $k-2$ then dominates every moment with a smaller Mellin
parameter.  Consequently, according to the parity of $m$, the normalized
reduced odd part is asymptotic to
\[
 z^{k-4}+(-1)^m.
\]
The resulting root localization, together with real coefficients and signed
reciprocity, then forces all nonzero zeros onto $\T$.  This argument is
logically independent of the moving-saddle analysis and closes the
remaining unbounded direction needed for the finite-exception corollary.

\subsection{Dominance of the outer logarithmic moments}

We use the split-Mellin and model moments introduced in
\eqref{eq:log-moment}--\eqref{eq:model-moments}.

\begin{lemma}[Logarithmic-moment edge dominance]
\label{lem:log-moment-dominance}
Fix an even weight $k$ and a normalized level-one Hecke eigenform
$f\in S_k(\SLZ)$.  Let $S_+,S_-\in\R$ with $S_+>S_-$.  Then
$\mathcal I_{f,m}(S_+)>0$ for all sufficiently large $m$, and
\begin{equation}
 \frac{\mathcal I_{f,m}(S_-)}{\mathcal I_{f,m}(S_+)}\longrightarrow0
 \qquad(m\to\infty).
 \label{eq:moment-ratio-zero}
\end{equation}
\end{lemma}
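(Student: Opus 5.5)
The plan is to work in the variable $x=\log y$, so that $\mathcal I_{f,m}(S)=\int_0^\infty f(ie^x)e^{Sx}x^m\,dx$.
For fixed $k$ and $f$, the function $F(x):=f(ie^x)$ is real: it equals $e^{-2\pi e^x}(1+\varepsilon(x))$, where $\varepsilon(x)=\sum_{n\ge2}a_f(n)e^{-2\pi(n-1)e^x}$ decays doubly exponentially.
As $m\to\infty$, the weight $x^m$ pushes the mass of the integrand out to large $x$.
There the saddle $x_S$ of $\Psi_S(x)=Sx-2\pi e^x+m\log x$ satisfies $2\pi e^{x_S}\approx m/x_S$, so $x_S\sim\log m\to\infty$.
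I will show that the integral is dominated by a neighbourhood of that saddle, and that the ratio of the two moments is governed by $e^{(S_--S_+)x_S}\to0$.

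\textbf{Step 1 (model moments).} For real $S$, extend Lemma~\ref{lem:transition-model-localization} in the regime where $m$ is large and $S$ is fixed (possibly negative).
The concavity argument only uses $\Psi_S''=-2\pi e^x-m/x^2<0$ and $B_S\to\infty$, both of which hold here since $B_S\ge m/x_S^2\to\infty$.
This gives concentration of $d\varpi_S$ on $|x-x_S|\le1$, with tails $e^{-cB_S}$.
Then, as in Lemma~\ref{lem:transition-model-ratio}, I differentiate $\log\mathcal J_m(S)$ in $S$ and integrate from $S_-$ to $S_+$.
Since $\mathscr L'(S)=x_S+O(1)$, and the saddle moves only by $O(1/B_S)$ over the fixed interval $[S_-,S_+]$, this yields
\[
 \frac{\mathcal J_m(S_-)}{\mathcal J_m(S_+)}=\exp\{-(S_+-S_-)(x_{S_+}+O(1))\}\longrightarrow0 .
\]
The part of the integrals near $x=0$ needs care when $S<0$.
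For $x\in[0,1]$ the integrand is at most $e^{|S|}x^m$, so that part is $O(1/m)$.
This is negligible, because $\mathcal J_m(S_+)$ grows like $e^{\Psi(x_S)}\approx (\log m)^m e^{-O(m)}$; I will check this lower bound from \eqref{eq:transition-J-size}.

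\textbf{Step 2 (transfer to $\mathcal I_{f,m}$).} Write $\mathcal I_{f,m}(S)=\mathcal J_m(S)+\sum_{n\ge2}a_f(n)\mathcal J_{n,m}(S)$.
I will show that each $n\ge2$ term is relatively negligible.
Substituting $y\mapsto y/n$ gives $\mathcal J_{n,m}(S)\le n^{-S}\mathcal J_m(S)$ for $S\ge0$.
This bound alone is not small, since $n^{-S}$ is bounded, so I will use the concentration of the measure instead.
On the saddle region $e^x\approx e^{x_S}\to\infty$, the extra factor $e^{-2\pi(n-1)e^x}$ is $\le e^{-c(n-1)e^{x_S}}$.
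This is exactly \eqref{eq:transition-weighted-Jn-tail} with $r=0$.
Summing with the polynomial bound $|a_f(n)|\ll n^{k}$, which holds because $k$ is fixed, gives
\[
 \Bigl|\sum_{n\ge2}a_f(n)\mathcal J_{n,m}(S)\Bigr|\le \bigl(e^{-ce^{x_S}}+e^{-cB_S}\bigr)\mathcal J_m(S)\cdot O_k(1)=o(\mathcal J_m(S)).
\]
Hence $\mathcal I_{f,m}(S)=\mathcal J_m(S)(1+o(1))$ for $S=S_\pm$.
In particular $\mathcal I_{f,m}(S_+)>0$ eventually, and the ratio in \eqref{eq:moment-ratio-zero} tends to $0$ by Step 1.

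\textbf{Main obstacle.} I expect the main difficulty to be making Step 1 rigorous uniformly when $S$ is fixed and $m\to\infty$, rather than $S\to\infty$ as in the earlier lemmas.
The curvature $B_S$ is now only of size about $m/\log m$, and $S$ may be negative, so the bound $x_S\ge\log(S/2\pi)$ is unavailable.
I would first try to re-run the phase-drop proof of Lemma~\ref{lem:transition-model-localization} with $m$ as the large parameter.
Failing that, a cruder argument suffices.
On one side, $\mathcal J_m(S_-)\le\int_0^\infty e^{S_-x-2\pi e^x}x^m dx$ is bounded by $e^{(S_--S_+)X}\mathcal J_m(S_+)$ plus the contribution of $x<X$, for any cutoff $X$.
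On the other side, $\mathcal J_m(S_+)$ is at least the integral over $[X+1,X+2]$.
Choosing $X=\tfrac12\log m$, the integrand at $x\approx x_S\sim\log m$ exceeds its values on $[0,X]$ by a factor that tends to infinity, which should close the argument.
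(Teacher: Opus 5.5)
Your proof is correct, but it follows a different route from the paper. The paper never locates a saddle. It uses only that the weight $(\log y)^m$ pushes mass out of every fixed compact set: $[1,R]$ contributes $O_R((\log R)^m)$ to $\mathcal J_m(S_\ast)$, while $[Y_1,Y_1+1]$ with $Y_1>R$ already gives $\mathcal J_m(S_\ast)\gg(\log Y_1)^m$. Two $\varepsilon$-arguments then finish the proof.
\begin{itemize}
\item Take $R$ with $|E_f(y)|\le\varepsilon e^{-2\pi y}$ on $[R,\infty)$. Here $E_f(y)=f(iy)-e^{-2\pi y}$ is handled as a single $O_f(e^{-4\pi y})$ function, with no sum over $n$. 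This gives $\mathcal I_{f,m}(S_\ast)=\mathcal J_m(S_\ast)(1+o(1))$.
\item Take $R$ with $y^{S_--S_+}\le\varepsilon$ on $[R,\infty)$. This gives the ratio.
\end{itemize}

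You instead re-run Lemma~\ref{lem:transition-model-localization} with $m$ as the large parameter, and this does go through. The saddle equation forces $x_S\to\infty$ uniformly for $S\in[S_-,S_+]$, which replaces $x_S\ge\log(S/2\pi)$. On $|x-x_S|\le1$ one has $-\Psi_S''\asymp B_S\asymp m/\log m$, and the concavity phase drop is unchanged. For $\mathscr L'(S)=x_S+O(1)$, Cauchy--Schwarz with the variance bound already suffices.

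One caveat: \eqref{eq:transition-weighted-Jn-tail} is stated only for $r\ge1$ and $S\ge S_{\mathfrak a}$. Your $r=0$ case must therefore be rederived, though it is immediate from the tail mass $\varpi_S(|x-x_S|>1)\ll B_S^{-1/2}e^{-cB_S}$.

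Your route buys quantitative information the lemma does not need. The ratio decays like $\exp\{-(S_+-S_-)(x_{S_+}+O(1))\}\approx(m/\log m)^{-(S_+-S_-)}$, and the modes $n\ge2$ are smaller by a factor $e^{-ce^{x_S}}$. The paper's argument gives only $o(1)$, but it is shorter and needs no new uniformity check.

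Your fallback with the moving cutoff $X=\tfrac12\log m$ is essentially the paper's compact-escape argument, and it works. The gain $(1+1/X)^m\approx e^{2m/\log m}$ beats the loss $e^{O(\sqrt m)}$ coming from $e^{-2\pi e^{X+2}}$. As written, however, it replaces only Step 1. Step 2 would need the same split at $X$, using $e^{-2\pi(n-1)e^x}\le e^{-2\pi(n-1)\sqrt m}$ for $x\ge X$.
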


\begin{proof}
Since $f$ is normalized, $a_f(1)=1$, and the Fourier coefficients of $f$ are
real.  Absolute convergence of the Fourier expansion for $y\ge1$ gives
\[
 f(iy)=e^{-2\pi y}+E_f(y),\qquad
 E_f(y)=O_f(e^{-4\pi y}).
\]
Thus $f(iy)$ is real for $y>0$.  After increasing $Y$, we may assume
$|E_f(y)|\le \frac12e^{-2\pi y}$ for $y\ge Y$, so in particular
$f(iy)>0$ on $[Y,\infty)$.

\smallskip
\noindent\emph{Escape from compact sets.}
Fix a real parameter $S_\ast$.  For every fixed $R>1$, the contribution of
$[1,R]$ to $\mathcal J_m(S_\ast)$ is
$O_{S_\ast,R}((\log R)^m)$.  Choose $Y_1>\max\{R,Y\}$ and use the interval
$[Y_1,Y_1+1]$ to obtain
\[
 \mathcal J_m(S_\ast)
 \ge
 \int_{Y_1}^{Y_1+1}
 e^{-2\pi y}y^{S_\ast}(\log y)^m\,\frac{dy}{y}
 \gg_{Y_1,S_\ast}(\log Y_1)^m.
\]
Since $\log Y_1>\log R$, every fixed compact interval contributes
$o(\mathcal J_m(S_\ast))$ as $m\to\infty$.

\smallskip
\noindent\emph{Dominance of the first Fourier mode.}
Given $\varepsilon>0$, choose $R\ge Y$ so large that
$|E_f(y)|\le\varepsilon e^{-2\pi y}$ for $y\ge R$.  On the compact interval
$[1,R]$,
\[
 \int_1^R |E_f(y)|y^{S_\ast}(\log y)^m\,\frac{dy}{y}
 =O_{f,S_\ast,R}((\log R)^m)
 =o(\mathcal J_m(S_\ast)),
\]
while the tail is bounded by $\varepsilon\mathcal J_m(S_\ast)$.  Since
$\varepsilon$ is arbitrary, for each fixed real $S_\ast$ we obtain
\begin{equation}
 \mathcal I_{f,m}(S_\ast)=\mathcal J_m(S_\ast)(1+o_{f,S_\ast}(1)).
 \label{eq:fixed-weight-IJ-asymptotic}
\end{equation}
In particular, $\mathcal I_{f,m}(S_+)>0$ for all sufficiently large $m$.

\smallskip
\noindent\emph{Comparison of Mellin parameters.}
Choose $R\ge1$ so large that
$y^{S_--S_+}\le\varepsilon$ for $y\ge R$.  The compact part of $\mathcal J_m(S_-)$ on $[1,R]$ is
$O_{S_-,R}((\log R)^m)$, whereas the lower-bound argument above, applied to
$\mathcal J_m(S_+)$ with any fixed $Y_1>R$, gives
$\mathcal J_m(S_+)\gg_{Y_1,S_+}(\log Y_1)^m$.  Hence the compact part is
$o(\mathcal J_m(S_+))$.  On $[R,\infty)$,
\[
 e^{-2\pi y}y^{S_-}(\log y)^m
 \le
 \varepsilon e^{-2\pi y}y^{S_+}(\log y)^m.
\]
It follows that
$\mathcal J_m(S_-)/\mathcal J_m(S_+)\to0$.  Combining this with
\eqref{eq:fixed-weight-IJ-asymptotic} proves
\eqref{eq:moment-ratio-zero}.
\end{proof}

\subsection{The paritywise root-of-unity limits}

Recall the exact split-Mellin identity from
Lemma~\ref{lem:transition-split-mellin}.  We combine it with the dominance
lemma to prove the fixed-weight theorem.

\begin{proof}[Proof of Theorem~\ref{thm:intro-fixed-weight-large-derivative}]
\smallskip
\noindent\emph{Coefficientwise root-of-unity asymptotics.}
Lemma~\ref{lem:log-moment-dominance} and
\eqref{eq:transition-split-mellin} give
\begin{equation}
 \Lambda_f^{(m)}(2)
 =\eta_{k,m}\mathcal I_{f,m}(k-2)(1+o(1)),
 \label{eq:Lambda2-moment-asymptotic}
\end{equation}
so the endpoint value is eventually nonzero.  For every critical integer
$3\le s\le k-3$, both $s$ and $k-s$ are strictly smaller than $k-2$.
Since only finitely many such integers occur,
Lemma~\ref{lem:log-moment-dominance} gives simultaneously
$\mathcal I_{f,m}(s),\mathcal I_{f,m}(k-s)
=o(\mathcal I_{f,m}(k-2))$.  The split-Mellin identity therefore yields
\begin{equation}
 \frac{\Lambda_f^{(m)}(s)}{\Lambda_f^{(m)}(2)}\longrightarrow0
 \qquad(3\le s\le k-3).
 \label{eq:interior-critical-ratio}
\end{equation}

Using
$\Lambda_f^{(m)}(k-2)=(-1)^{k/2+m}\Lambda_f^{(m)}(2)$ and
$i^{4-k}=(-1)^{k/2}$, the two endpoint terms combine as
\[
 (k-2)\Lambda_f^{(m)}(2)\bigl(z^{k-3}+(-1)^m z\bigr).
\]
By \eqref{eq:interior-critical-ratio}, all other normalized coefficients tend
to zero.  Thus, for all sufficiently large $m$, the following normalized
reduced odd polynomial is well defined:
\begin{equation}
 \widehat Q_{f,m}(z):=\frac{Q_{f,m}^{-}(z)}{(k-2)\Lambda_f^{(m)}(2)z}.
 \label{eq:fixed-weight-normalized-reduced}
\end{equation}
It satisfies \eqref{eq:intro-fixed-weight-root-unity-limit}.

\smallskip
\noindent\emph{Localization and unit-circle rigidity.}
Fix one parity class of $m$.  Along this subsequence the target polynomial
$z^{k-4}+(-1)^m$ is fixed and has only simple zeros on $\T$.  Around each
limiting root $\zeta$, choose a small annular sector
\[
 U_\zeta
 =\left\{\zeta e^{u+i\vartheta}:
   u,\vartheta\in\R,\ |u|<\delta,\ |\vartheta|<\delta\right\},
\]
with the sectors pairwise disjoint.  Choose $\delta$ so small that each
$U_\zeta$ is a piecewise smooth Jordan domain, its closure contains no other
limiting root, and its boundary is disjoint from the zeros of
$z^{k-4}+(-1)^m$.

In this fixed degree, coefficientwise convergence is uniform on compact
subsets.  Hence the convergence to $z^{k-4}+(-1)^m$ is uniform on the finite
union $\bigcup_\zeta\partial U_\zeta$, where the limiting polynomial has a
positive minimum.  Rouch\'e's theorem therefore gives exactly one zero,
counted with multiplicity, of $\widehat Q_{f,m}$ in each $U_\zeta$ for all
sufficiently large $m$ of the chosen parity.  Since the reduced polynomial has
degree $k-4$, these sectors account for all of its zeros.

Because $|\zeta|=1$, each sector is invariant under
$z\mapsto1/\overline z$: if $z=\zeta e^{u+i\vartheta}$ lies in $U_\zeta$,
then $1/\overline z=\zeta e^{-u+i\vartheta}$ lies there as well.
Lemma~\ref{lem:Qodd-real-reciprocal} shows that $Q_{f,m}^{-}$ has real
coefficients and satisfies
\[
 Q_{f,m}^{-}(z)=(-1)^m z^{k-2}Q_{f,m}^{-}(1/z).
\]
After division by the nonzero real scalar
$(k-2)\Lambda_f^{(m)}(2)$ and by $z$, the reduced polynomial satisfies
\[
 \widehat Q_{f,m}(z)=(-1)^m z^{k-4}\widehat Q_{f,m}(1/z).
\]
Thus its nonzero root set is invariant under conjugation and inversion, hence
under $z\mapsto1/\overline z$.  The unique zero in each invariant sector must
be fixed by this map; otherwise its image would be a second zero in the same
sector.  Consequently every zero of $\widehat Q_{f,m}$ lies on $\T$.  The
Rouch\'e count also shows that these zeros are simple.  Repeating the argument
for the other parity covers the full sequence of sufficiently large $m$.

Finally, the coefficient of $z$ in $Q_{f,m}^{-}$ is
$(k-2)(-1)^m\Lambda_f^{(m)}(2)\ne0$, so the forced zero at the origin is
simple.  This proves the fixed-$f$ assertion.

\smallskip
\noindent\emph{Uniformity in the eigenform.}
If $S_k(\SLZ)=0$, the uniform assertion is vacuous.  Otherwise, simultaneous
diagonalization of the commuting Hecke operators and multiplicity one imply
that there are only finitely many normalized simultaneous Hecke eigenforms in
$S_k(\SLZ)$.  Taking the maximum of their finitely many thresholds gives an
integer $m_0(k)$ that works uniformly over all of them.
\end{proof}

\subsection{Finite exceptions}

Together, the fixed-weight and large-weight theorems leave only finitely many
weight--derivative pairs to examine.

\begin{proof}[Proof of Corollary~\ref{cor:finite-pair-exception}]
Let $K_0$ be the cutoff in Theorem~\ref{thm:intro-main}, and let
\[
 \mathcal W=\{k<K_0:\ k\text{ even and }S_k(\SLZ)\ne0\}.
\]
For each $k\in\mathcal W$, Theorem~\ref{thm:intro-fixed-weight-large-derivative}
provides an integer $m_0(k)$ that works uniformly for all normalized Hecke
eigenforms in $S_k(\SLZ)$.  Hence the only pairs not covered by the two main
asymptotic theorems belong to
\[
 \mathcal P=\{(k,m): k\in\mathcal W,\ 0\le m<m_0(k)\},
\]
which is finite.  As noted at the end of the proof of
Theorem~\ref{thm:intro-fixed-weight-large-derivative}, each fixed weight in
$\mathcal W$ admits only finitely many normalized simultaneous Hecke
eigenforms.  Thus only finitely many triples $(k,f,m)$ can remain exceptional.
\end{proof}

\appendix
\section{Technical estimates for the upper endpoint}
\label{sec:bell-appendix}

This appendix proves Proposition~\ref{prop:slow-first-order-edge} in four
steps.  Lemma~\ref{lem:bell-coefficient-seminorm} supplies a stable
coefficient seminorm.  Lemma~\ref{lem:normalized-bell-coeff-derivative}
controls the derivative of the resulting normalized coefficient.
Lemma~\ref{lem:slow-bell-expansion} then yields the Gamma-derivative
expansion, and Lemma~\ref{lem:slow-L-factor-negligible} removes the
$L$-factor.  The resulting logarithmic derivative is integrated along the
right edge.

\subsection{Gamma derivatives and the negligible \texorpdfstring{$L$}{L}-factor}

Recall that
\[
 \begin{aligned}
 G(s)&=(2\pi)^{-s}\Gamma(s),
 &\psi(s)&=\frac{\Gamma'(s)}{\Gamma(s)},\\
 \lambda(s)&=\frac{G'(s)}{G(s)}=\psi(s)-\log(2\pi),
 &g_j(s)&=\frac{G^{(j)}(s)}{G(s)}.
 \end{aligned}
\]
For $B>0$, write
\[
 I_{k,B}:=[k-1-B\log k,k-1].
\]
For $\mu\in\Z_{\ge1}$, $\lambda>0$, and a formal power series
$F(t)=\sum_{\ell\ge0}F_\ell t^\ell$, set
\[
 \begin{aligned}
 (\mu)_0&=1,\qquad
 (\mu)_\ell=\mu(\mu-1)\cdots(\mu-\ell+1)
 \quad(1\le \ell\le \mu),\\
 \|F\|_{\mu,\lambda}&:=
 \sum_{0\le \ell\le\mu}|F_\ell|\frac{(\mu)_\ell}{\lambda^\ell}.
 \end{aligned}
\]
When $\lambda=\lambda(s)>0$, also define
\[
 \mathcal C_{\mu,s}(F)
 :=\frac{\mu![t^\mu]e^{\lambda(s)t}F(t)}{\lambda(s)^\mu}
 =\sum_{0\le\ell\le\mu}F_\ell\frac{(\mu)_\ell}{\lambda(s)^\ell}.
\]
Finally, put
\[
 \mathscr U_s(t)=\frac{\psi'(s)}2t^2,
 \qquad
 \mathscr H_s(t)=\sum_{\ell\ge3}\frac{\psi^{(\ell-1)}(s)}{\ell!}t^\ell.
\]
The normalization by $\lambda(s)^\mu$ makes the falling-factorial weights
natural: coefficient extraction from $e^{\lambda(s)t}$ sends $t^\ell$ to
$(\mu)_\ell/\lambda(s)^\ell$.

\begin{lemma}[Coefficient seminorm for Bell remainders]
\label{lem:bell-coefficient-seminorm}
Fix $B,\mathfrak b>0$.  Uniformly for integers $\mu$ with
$1\le\mu\le \mathfrak b\log k$ and $s\in I_{k,B}$, one has
$\lambda(s)>0$ for all sufficiently large $k$.  For every formal power series
$F$,
\begin{equation}
 |\mathcal C_{\mu,s}(F)|\le \|F\|_{\mu,\lambda(s)},
 \label{eq:bell-seminorm-coeff-bound}
\end{equation}
and, for any two formal power series $F_1$ and $F_2$,
\begin{equation}
 \|F_1F_2\|_{\mu,\lambda(s)}
 \le \|F_1\|_{\mu,\lambda(s)}\|F_2\|_{\mu,\lambda(s)}.
 \label{eq:bell-seminorm-submultiplicative}
\end{equation}
Moreover, uniformly in the same range,
\begin{align}
 \|\mathscr H_s\|_{\mu,\lambda(s)}&=O_{B,\mathfrak b}(k^{-2}),
 &\|\partial_s\mathscr H_s\|_{\mu,\lambda(s)}&=O_{B,\mathfrak b}(k^{-3}),
 \label{eq:bell-R-seminorm}\\
 \left\|e^{\mathscr H_s}-1\right\|_{\mu,\lambda(s)}&=O_{B,\mathfrak b}(k^{-2}),
 &\left\|\partial_s(e^{\mathscr H_s})\right\|_{\mu,\lambda(s)}&=O_{B,\mathfrak b}(k^{-3}),
 \label{eq:bell-expR-seminorm}\\
 \|e^{\mathscr U_s}-1-\mathscr U_s\|_{\mu,\lambda(s)}
 &=O_{B,\mathfrak b}(k^{-2}).
 \label{eq:bell-quadratic-tail-seminorm}
\end{align}
\end{lemma}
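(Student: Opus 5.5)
Positivity of $\lambda(s)$ comes first. For $s\in I_{k,B}$ one has $s\ge k-1-B\log k$. Since $\psi(s)=\log s+O(1/s)$, this gives $\lambda(s)=\log(s/(2\pi))+O(1/s)\ge\tfrac12\log k$ for large $k$, uniformly in $s$. The two seminorm inequalities are purely combinatorial. For \eqref{eq:bell-seminorm-coeff-bound} the triangle inequality applied to the defining sum of $\mathcal C_{\mu,s}(F)$ suffices. For \eqref{eq:bell-seminorm-submultiplicative} we write the coefficient of $t^\ell$ in $F_1F_2$ as $\sum_{a+b=\ell}F_{1,a}F_{2,b}$. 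It then suffices to check $(\mu)_{a+b}\le(\mu)_a(\mu)_b$ for $a+b\le\mu$. This holds because $(\mu)_{a+b}=(\mu)_a(\mu-a)_b$ and $(\mu-a)_b\le(\mu)_b$ termwise. The powers of $\lambda$ match exactly, and the terms with $a+b>\mu$ are simply dropped from the left-hand side, which only helps.

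The estimates \eqref{eq:bell-R-seminorm} rest on the polygamma bounds $|\psi^{(\ell-1)}(s)|\le(\ell-2)!\,s^{1-\ell}(1+O(\ell/s))$. A cleaner route is the crude bound $|\psi^{(j)}(s)|\le j!\,s^{-j}\cdot C$, valid for $j\ge1$ and $s\ge1$, which comes from the series $\psi^{(j)}(s)=(-1)^{j+1}j!\sum_{n\ge0}(s+n)^{-j-1}$. With it,
\[
\|\mathscr H_s\|_{\mu,\lambda}\le C\sum_{3\le\ell\le\mu}\frac{(\ell-2)!}{\ell!}\,\frac{(\mu)_\ell}{(s\lambda)^{\ell-1}}\cdot\frac1{\lambda}.
\]
Since $(\mu)_\ell\le\mu^\ell$ and $\mu/(s\lambda)\ll\mathfrak b/k$, the $\ell=3$ term is $O(\mu^3/(s^2\lambda^3))=O(k^{-2})$. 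The remaining terms form a geometric series with ratio $O(\log k/k)$. For $\partial_s\mathscr H_s$ the coefficients are $\psi^{(\ell)}(s)/\ell!$, which carry an extra factor $\ell/s$, giving $O(k^{-3})$. Differentiating $\lambda$ itself never enters, because $\lambda$ is held as the weight parameter.

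The remaining bounds \eqref{eq:bell-expR-seminorm} and \eqref{eq:bell-quadratic-tail-seminorm} follow from submultiplicativity. We have $\|e^{\mathscr H_s}-1\|\le e^{\|\mathscr H_s\|}-1=O(k^{-2})$. Also $\partial_s e^{\mathscr H_s}=(\partial_s\mathscr H_s)e^{\mathscr H_s}$, so its seminorm is at most $\|\partial_s\mathscr H_s\|\,e^{\|\mathscr H_s\|}=O(k^{-3})$. For $\mathscr U_s$ we have $\|\mathscr U_s\|=\tfrac12|\psi'(s)|\mu(\mu-1)/\lambda^2\ll\mu^2/(k\log^2k)=O(1/k)$. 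Hence $\|e^{\mathscr U_s}-1-\mathscr U_s\|\le e^{\|\mathscr U_s\|}-1-\|\mathscr U_s\|=O(\|\mathscr U_s\|^2)=O(k^{-2})$.

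The main obstacle is a subtlety in the seminorm. It truncates at degree $\mu$, so it is really a seminorm on the quotient by $t^{\mu+1}$, and we must confirm that submultiplicativity is compatible with composing through $\exp$. It is, because the coefficients up to degree $\mu$ of $e^F$ depend only on the coefficients of $F$ up to degree $\mu$, and the power-series bound $\|F^n\|\le\|F\|^n$ is applied termwise. All constants depend only on $B$ and $\mathfrak b$ through $\mu\le\mathfrak b\log k$ and the lower bound on $s$, which gives the stated uniformity.
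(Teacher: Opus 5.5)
Your proposal is correct and follows essentially the same route as the paper. Both use the triangle inequality and $(\mu)_{a+b}\le(\mu)_a(\mu)_b$ for the two seminorm properties, the polygamma series bound together with $\mu/\lambda(s)=O_{\mathfrak b}(1)$ and $s\asymp k$ for $\mathscr H_s$ and $\partial_s\mathscr H_s$ (with the weights held fixed), and submultiplicativity plus the exponential series for the remaining three estimates.

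One small inconsistency: your display uses the factor $(\ell-2)!/\ell!$. That factor comes from the sharper bound $|\psi^{(\ell-1)}(s)|\ll(\ell-2)!\,s^{1-\ell}$, not from the crude bound you announced, which gives $1/\ell$ as in the paper. Either choice yields $O_{B,\mathfrak b}(k^{-2})$ and $O_{B,\mathfrak b}(k^{-3})$, so the argument is unaffected.
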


\begin{proof}
The coefficient bound \eqref{eq:bell-seminorm-coeff-bound} follows from
\[
 \frac{\mu![t^\mu]e^{\lambda t}t^\ell}{\lambda^\mu}
 =\frac{(\mu)_\ell}{\lambda^\ell}.
\]
For submultiplicativity, write
$F_1(t)=\sum_a A_at^a$ and $F_2(t)=\sum_b B_bt^b$.  Since
$(\mu)_{a+b}\le(\mu)_a(\mu)_b$ whenever $a+b\le\mu$, summing the
convolution coefficients of $F_1F_2$ gives
\eqref{eq:bell-seminorm-submultiplicative}.

In the indicated range, $\lambda(s)\asymp\log k$, hence
$\mu/\lambda(s)\le K_{B,\mathfrak b}$ for a constant depending only on
$B$ and $\mathfrak b$.  For $j\ge1$, the polygamma series is
\[
 \psi^{(j)}(s)=(-1)^{j+1}j!\sum_{n\ge0}(s+n)^{-j-1}.
\]
The summand is positive and decreasing, so the integral comparison gives, for
$\ell\ge3$,
\[
\begin{aligned}
 \left|\frac{\psi^{(\ell-1)}(s)}{\ell!}\right|
 &=\frac1\ell\sum_{n\ge0}(s+n)^{-\ell}\\
 &\le \frac1\ell\left(s^{-\ell}
       +\int_0^\infty(s+x)^{-\ell}\,dx\right)\\
 &=\frac{s^{-\ell}}\ell
   +\frac{s^{1-\ell}}{\ell(\ell-1)}
 \le \frac{C}{\ell s^{\ell-1}}.
\end{aligned}
\]
Therefore
\[
 \|\mathscr H_s\|_{\mu,\lambda(s)}
 \le \sum_{\ell\ge3}\frac{C}{\ell s^{\ell-1}}K_{B,\mathfrak b}^\ell
 =O_{B,\mathfrak b}(k^{-2}).
\]
After differentiating a coefficient once,
\[
 \left|\frac{\psi^{(\ell)}(s)}{\ell!}\right|
 =\sum_{n\ge0}(s+n)^{-\ell-1}
 \le s^{-\ell-1}+\frac{s^{-\ell}}\ell.
\]
Since
\[
 \frac{(\mu)_\ell}{\lambda(s)^\ell}
 \le K_{B,\mathfrak b}^\ell,
\]
summation over $\ell\ge3$ gives
$\|\partial_s\mathscr H_s\|_{\mu,\lambda(s)}=O_{B,\mathfrak b}(k^{-3})$.
The estimates for $e^{\mathscr H_s}-1$ and its $s$-derivative follow from
submultiplicativity and the exponential series in the seminorm.  Finally,
\[
 \|\mathscr U_s\|_{\mu,\lambda(s)}
 =\frac{\psi'(s)}2\frac{\mu(\mu-1)}{\lambda(s)^2}
 =O_{B,\mathfrak b}(k^{-1}),
\]
and another application of submultiplicativity and the exponential series
gives \eqref{eq:bell-quadratic-tail-seminorm}.
\end{proof}

\begin{lemma}[Normalized Bell coefficient and its $s$-derivative]
\label{lem:normalized-bell-coeff-derivative}
Fix $B,\mathfrak b>0$.  Uniformly for integers $\mu$ with
$1\le\mu\le \mathfrak b\log k$ and $s\in I_{k,B}$, put
\[
 \mathfrak u_{\mu,s}:=\mathcal C_{\mu,s}(\mathscr U_s)
 =\frac{\psi'(s)\mu(\mu-1)}{2\lambda(s)^2}.
\]
Then
\begin{align}
 \mathcal C_{\mu,s}(e^{\mathscr U_s}e^{\mathscr H_s})
 &=1+\mathfrak u_{\mu,s}+O_{B,\mathfrak b}(k^{-2}),
 \label{eq:normalized-bell-coeff}\\
 \frac{d}{ds}\mathcal C_{\mu,s}(e^{\mathscr U_s}e^{\mathscr H_s})
 &=\mathfrak u'_{\mu,s}+O_{B,\mathfrak b}(k^{-3}).
 \label{eq:normalized-bell-coeff-derivative}
\end{align}
The first estimate also shows that
$\mathcal C_{\mu,s}(e^{\mathscr U_s}e^{\mathscr H_s})>0$ uniformly for all
sufficiently large $k$.  Consequently its real logarithm is well defined and
\[
 \frac{d}{ds}\log \mathcal C_{\mu,s}(e^{\mathscr U_s}e^{\mathscr H_s})
 =O_{B,\mathfrak b}(k^{-2}).
\]
\end{lemma}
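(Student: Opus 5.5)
We write $\lambda=\lambda(s)$, $\mathscr U=\mathscr U_s$, $\mathscr H=\mathscr H_s$, and work entirely inside the seminorm calculus of Lemma~\ref{lem:bell-coefficient-seminorm}. For the first estimate we decompose
\[
 e^{\mathscr U}e^{\mathscr H}=1+\mathscr U+(e^{\mathscr U}-1-\mathscr U)+e^{\mathscr U}(e^{\mathscr H}-1).
\]
The coefficient functional $\mathcal C_{\mu,s}$ is linear, with $\mathcal C_{\mu,s}(1)=1$ and $\mathcal C_{\mu,s}(\mathscr U)=\mathfrak u_{\mu,s}$ by definition. The third term is $O(k^{-2})$ by \eqref{eq:bell-seminorm-coeff-bound} and \eqref{eq:bell-quadratic-tail-seminorm}. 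For the fourth term we use submultiplicativity \eqref{eq:bell-seminorm-submultiplicative}. We have $\|e^{\mathscr U}\|\le e^{\|\mathscr U\|}=O(1)$, since $\|\mathscr U\|=O(k^{-1})$ was shown in the proof of the lemma. Combined with \eqref{eq:bell-expR-seminorm}, this gives $O(k^{-2})$ and proves \eqref{eq:normalized-bell-coeff}. Positivity follows because $\mathfrak u_{\mu,s}\ge0$ (indeed $\psi'>0$) and $\mathfrak u_{\mu,s}=O(k^{-1})$, so the coefficient equals $1+O(k^{-1})$.

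For the derivative we must handle the fact that $\mathcal C_{\mu,s}$ depends on $s$ in two ways: through the weights $(\mu)_\ell/\lambda(s)^\ell$ and through the series itself. Write $\mathcal C_{\mu,s}(F_s)=\sum_\ell F_{s,\ell}(\mu)_\ell\lambda^{-\ell}$. Then
\[
 \frac{d}{ds}\mathcal C_{\mu,s}(F_s)=\mathcal C_{\mu,s}(\partial_sF_s)-\frac{\lambda'}{\lambda}\,\mathcal C_{\mu,s}(t\partial_tF_s).
\]
Here $\lambda'=\psi'(s)=O(k^{-1})$ and $\lambda\asymp\log k$, so $\lambda'/\lambda=O(1/(k\log k))$. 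Apply this with $F_s=e^{\mathscr U}e^{\mathscr H}$, using the same decomposition as above. The pieces $1$ and $\mathscr U$ reproduce exactly $\mathfrak u'_{\mu,s}$. For the remainder $G:=(e^{\mathscr U}-1-\mathscr U)+e^{\mathscr U}(e^{\mathscr H}-1)$ we need to show two things:
\[
 \|\partial_sG\|=O(k^{-3}),\qquad \|t\partial_tG\|=O(k^{-2}).
\]
The second suffices because it gets multiplied by $\lambda'/\lambda$, which yields $o(k^{-3})$.

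For $\|t\partial_tG\|$, the operator $t\partial_t$ multiplies the coefficient of $t^\ell$ by $\ell\le\mu$. Moreover the seminorm only sees $\ell\le\mu$, and for $\ell\ge2$ the factor $\ell$ is absorbed by the geometric decay already present in the bounds of the lemma: the estimates there had slack $K^\ell$ with summable $1/(\ell s^{\ell-1})$ tails. Concretely, we would redo the estimates of Lemma~\ref{lem:bell-coefficient-seminorm} with weights $\ell(\mu)_\ell\lambda^{-\ell}$. Since $\ell\le 2^\ell$, this amounts to replacing $K_{B,\mathfrak b}$ by $2K_{B,\mathfrak b}$, and every $O$-bound survives.

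For $\|\partial_sG\|$, use $\partial_s\mathscr U=\tfrac12\psi''(s)t^2$, whose seminorm is $O(k^{-2}(\log k)^0)=O(k^{-2})$. Then
\[
 \partial_s(e^{\mathscr U}-1-\mathscr U)=(e^{\mathscr U}-1)\,\partial_s\mathscr U,
\]
which has seminorm $O(k^{-1})\cdot O(k^{-2})$. Similarly
\[
 \partial_s\bigl(e^{\mathscr U}(e^{\mathscr H}-1)\bigr)=e^{\mathscr U}\partial_s\mathscr U\,(e^{\mathscr H}-1)+e^{\mathscr U}\partial_s(e^{\mathscr H}),
\]
which is $O(k^{-4})+O(k^{-3})$ by \eqref{eq:bell-expR-seminorm}. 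This gives \eqref{eq:normalized-bell-coeff-derivative}.

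The logarithmic derivative then follows from the quotient rule, since $\mathfrak u'_{\mu,s}=O(k^{-2})$. Indeed, differentiating $\psi'(s)\mu(\mu-1)/(2\lambda^2)$ gives $\psi''\mu^2/\lambda^2+\psi'^2\mu^2/\lambda^3=O(k^{-2})$, and the coefficient itself is bounded below by $1/2$.

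The main obstacle is the derivative step. We have to track the $s$-dependence of the weights $\lambda(s)^{-\ell}$, which is invisible in the seminorm, and make sure the extra factor $\ell$ coming from $t\partial_t$ does not spoil the lemma's bounds. The plan is to check explicitly that the lemma's proofs tolerate the inflated constant $2K_{B,\mathfrak b}$.
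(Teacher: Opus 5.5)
Your proposal is correct and follows essentially the paper's argument. It uses the same split $e^{\mathscr U_s}e^{\mathscr H_s}=1+\mathscr U_s+G$, with seminorm bounds $O(k^{-2})$ for $G$ and $O(k^{-3})$ for $\partial_s G$, and the same differentiation formula $\frac{d}{ds}\mathcal C_{\mu,s}(F_s)=\mathcal C_{\mu,s}(\partial_sF_s)-\frac{\lambda'}{\lambda}\mathcal C_{\mu,s}(t\partial_tF_s)$. The only difference concerns the Euler-operator term: the paper uses the crude bound $\|t\partial_t G\|\le\mu\|G\|=O(k^{-2}\log k)$, which already suffices against $\lambda'/\lambda=O((k\log k)^{-1})$. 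Your re-run of the seminorm estimates with the still-submultiplicative weights $2^\ell(\mu)_\ell\lambda^{-\ell}$ is valid but unnecessary.
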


\begin{proof}
All unlabelled seminorms below use the weights
$(\mu)_\ell/\lambda(s)^\ell$.  When estimating a coefficientwise derivative
$\partial_sF_s$, we hold these weights fixed.  Their $s$-dependence is restored
separately by \eqref{eq:Cmu-s-derivative}.
Lemma~\ref{lem:bell-coefficient-seminorm} gives
$\|\mathscr H_s\|=O(k^{-2})$ and
$\|\partial_s\mathscr H_s\|=O(k^{-3})$.  Also
\[
 \|\mathscr U_s\|=O(k^{-1}),\qquad
 \|\partial_s\mathscr U_s\|=O(k^{-2}),
\]
because $\psi'(s)=O(k^{-1})$, $\psi''(s)=O(k^{-2})$, and
$\mu/\lambda(s)=O_{B,\mathfrak b}(1)$.  By submultiplicativity,
\[
 e^{\mathscr U_s}e^{\mathscr H_s}=1+\mathscr U_s+\mathscr W_s,
 \qquad
 \|\mathscr W_s\|=O_{B,\mathfrak b}(k^{-2}),
 \qquad
 \|\partial_s\mathscr W_s\|=O_{B,\mathfrak b}(k^{-3}).
\]
Indeed, $e^{\mathscr U_s}=1+\mathscr U_s+O(k^{-2})$ and
$\partial_s(e^{\mathscr U_s}-1-\mathscr U_s)=O(k^{-3})$ in the seminorm,
whereas $e^{\mathscr H_s}=1+O(k^{-2})$ and
$\partial_s e^{\mathscr H_s}=O(k^{-3})$ by
Lemma~\ref{lem:bell-coefficient-seminorm}.  Equation
\eqref{eq:normalized-bell-coeff} now follows from
\eqref{eq:bell-seminorm-coeff-bound}.

For any coefficientwise differentiable family of formal power series $F_s$,
direct differentiation of the coefficient formula gives
\begin{equation}
 \frac{d}{ds}\mathcal C_{\mu,s}(F_s)
 =\mathcal C_{\mu,s}(\partial_sF_s)
 -\frac{\lambda'(s)}{\lambda(s)}\mathcal C_{\mu,s}(\mathcal E F_s),
 \qquad \mathcal E=t\partial_t.
 \label{eq:Cmu-s-derivative}
\end{equation}
For $F_s=\mathscr U_s$, this is exactly $\mathfrak u'_{\mu,s}$, while for
$F_s=1$ both terms vanish.  For $F_s=\mathscr W_s$, the first term is
$O(k^{-3})$.  If
$\mathscr W_s(t)=\sum_\ell \mathfrak w_\ell(s)t^\ell$, then only
coefficients with $\ell\le\mu$ enter $\mathcal C_{\mu,s}$, and hence
\[
 \|\mathcal E\mathscr W_s\|_{\mu,\lambda(s)}
 \le\mu\|\mathscr W_s\|_{\mu,\lambda(s)}.
\]
Thus
\[
 |\mathcal C_{\mu,s}(\mathcal E\mathscr W_s)|
 \le \mu\|\mathscr W_s\|_{\mu,\lambda(s)}
 =O_{B,\mathfrak b}((\log k)k^{-2}),
\]
whereas $\lambda'(s)/\lambda(s)=O((k\log k)^{-1})$.  Applying
\eqref{eq:Cmu-s-derivative} to $1+\mathscr U_s+\mathscr W_s$ proves
\eqref{eq:normalized-bell-coeff-derivative}.  Since
$\mathfrak u_{\mu,s}=O(k^{-1})$ and
$\mathfrak u'_{\mu,s}=O(k^{-2})$, the logarithmic derivative bound follows.
\end{proof}

\begin{lemma}[Uniform Gamma-derivative expansion]
\label{lem:slow-bell-expansion}
Fix $B,\mathfrak b>0$.  Uniformly for integers $\mu$ with
$1\le \mu\le \mathfrak b\log k$ and $s\in I_{k,B}$,
\begin{equation}
 g_\mu(s)=\lambda(s)^\mu\left(1+
 \frac{\psi'(s)\mu(\mu-1)}{2\lambda(s)^2}+O_{B,\mathfrak b}(k^{-2})\right),
 \label{eq:slow-bell-expansion}
\end{equation}
and
\begin{equation}
 \frac{d}{ds}\log g_\mu(s)
 =\frac{\mu\psi'(s)}{\lambda(s)}+O_{B,\mathfrak b}(k^{-2}).
 \label{eq:slow-bell-log-derivative}
\end{equation}
\end{lemma}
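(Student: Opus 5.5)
The plan is to write $g_\mu=G^{(\mu)}/G$ as a Bell-polynomial coefficient and then read both estimates off Lemma~\ref{lem:normalized-bell-coeff-derivative}. Taylor expansion of $\log G$ about $s$ gives the formal identity
\[
 \frac{G(s+t)}{G(s)}=\exp\Bigl(\lambda(s)t+\mathscr U_s(t)+\mathscr H_s(t)\Bigr),
\]
because $(\log G)^{(\ell)}=\psi^{(\ell-1)}$ for $\ell\ge2$. Extracting the coefficient of $t^\mu$ and multiplying by $\mu!$ yields
\[
 g_\mu(s)=\mu![t^\mu]\,e^{\lambda(s)t}e^{\mathscr U_s(t)}e^{\mathscr H_s(t)}
 =\lambda(s)^\mu\,\mathcal C_{\mu,s}\bigl(e^{\mathscr U_s}e^{\mathscr H_s}\bigr).
\]
This is an exact identity, valid for every $s$ with $\lambda(s)>0$. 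By Lemma~\ref{lem:bell-coefficient-seminorm}, that positivity holds throughout the range $s\in I_{k,B}$, $1\le\mu\le\mathfrak b\log k$, for large $k$.

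The expansion \eqref{eq:slow-bell-expansion} then follows at once from \eqref{eq:normalized-bell-coeff}, since $\mathfrak u_{\mu,s}=\psi'(s)\mu(\mu-1)/(2\lambda(s)^2)$.

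For the logarithmic derivative, the positivity statement in Lemma~\ref{lem:normalized-bell-coeff-derivative} makes $g_\mu(s)>0$, so we may take real logarithms:
\[
 \frac{d}{ds}\log g_\mu(s)=\mu\frac{\lambda'(s)}{\lambda(s)}
 +\frac{d}{ds}\log\mathcal C_{\mu,s}\bigl(e^{\mathscr U_s}e^{\mathscr H_s}\bigr).
\]
The first term equals $\mu\psi'(s)/\lambda(s)$ exactly, because $\lambda'=\psi'$. The second term is $O_{B,\mathfrak b}(k^{-2})$ by the last display of Lemma~\ref{lem:normalized-bell-coeff-derivative}. Together these give \eqref{eq:slow-bell-log-derivative}.

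The main point needing care is the formal identity itself. One has to confirm that the coefficient extraction in $\mathcal C_{\mu,s}$ involves only the terms $t^\ell$ with $\ell\le\mu$ of $e^{\mathscr U_s}e^{\mathscr H_s}$, which matches the truncation built into the seminorm. One also has to confirm that writing the Taylor series of $\log G$ at $s$ is legitimate as a formal power series. This is harmless, since only finitely many coefficients enter; convergence is not needed.

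After that, uniformity in $\mu$ and $s$ is inherited directly from Lemmas~\ref{lem:bell-coefficient-seminorm} and~\ref{lem:normalized-bell-coeff-derivative}.
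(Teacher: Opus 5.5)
Your proposal is correct and is essentially the paper's own proof. The paper likewise derives the exact identity $g_\mu(s)/\lambda(s)^\mu=\mathcal C_{\mu,s}(e^{\mathscr U_s}e^{\mathscr H_s})$ from the Taylor expansion of $\log G$, then reads \eqref{eq:slow-bell-expansion} off \eqref{eq:normalized-bell-coeff}, and obtains the logarithmic derivative from the last display of Lemma~\ref{lem:normalized-bell-coeff-derivative} together with $\lambda'=\psi'$.
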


\begin{proof}
The Taylor expansion of $\log G$ at $s$ gives
\begin{equation}
 \log\frac{G(s+t)}{G(s)}
 =\lambda(s)t+\frac{\psi'(s)}2t^2+\mathscr H_s(t).
 \label{eq:slow-bell-remainder}
\end{equation}
Since
\[
 \frac{G(s+t)}{G(s)}=\sum_{\nu\ge0}g_\nu(s)\frac{t^\nu}{\nu!},
\]
we have
\begin{equation}
 \frac{g_\mu(s)}{\lambda(s)^\mu}
 =\mathcal C_{\mu,s}(e^{\mathscr U_s}e^{\mathscr H_s}).
 \label{eq:slow-bell-coeff}
\end{equation}
Lemma~\ref{lem:normalized-bell-coeff-derivative} gives
\[
 \frac{g_\mu(s)}{\lambda(s)^\mu}
 =1+\frac{\psi'(s)\mu(\mu-1)}{2\lambda(s)^2}
 +O_{B,\mathfrak b}(k^{-2}),
\]
which proves \eqref{eq:slow-bell-expansion}.  In particular,
$g_\mu(s)>0$ uniformly in this range for all sufficiently large $k$, so its
real logarithm is well defined.  Moreover,
\[
 \frac{d}{ds}\log\frac{g_\mu(s)}{\lambda(s)^\mu}
 =\frac{d}{ds}\log\mathcal C_{\mu,s}(e^{\mathscr U_s}e^{\mathscr H_s})
 =O_{B,\mathfrak b}(k^{-2}).
\]
Since $\lambda'(s)=\psi'(s)$, this gives
\[
 \frac{d}{ds}\log g_\mu(s)
 =\mu\frac{\lambda'(s)}{\lambda(s)}+O_{B,\mathfrak b}(k^{-2})
 =\frac{\mu\psi'(s)}{\lambda(s)}+O_{B,\mathfrak b}(k^{-2}),
\]
which is \eqref{eq:slow-bell-log-derivative}.
\end{proof}

\begin{lemma}[Negligibility of the $L$-factor near the right edge]
\label{lem:slow-L-factor-negligible}
Fix $B,\mathfrak b>0$.  There exists $c_{B,\mathfrak b}>0$ such that,
uniformly for integers $\mu$ with $1\le\mu\le \mathfrak b\log k$,
$s\in I_{k,B}$, and normalized level-one Hecke eigenforms $f$, one has
\begin{equation}
 \mathcal D_{f,\mu}(s)
 =g_\mu(s)\left(1+O_{B,\mathfrak b}(2^{-c_{B,\mathfrak b}k})\right).
 \label{eq:slow-L-factor-negligible}
\end{equation}
\end{lemma}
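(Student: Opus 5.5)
The plan is to use the Leibniz rule on $\Lambda_f=G\cdot L_f$ together with the fact that $L_f^{(j)}(s)$ is extremely close to its value $\delta_{j0}$ at large real $s$. Write
\[
 \mathcal D_{f,\mu}(s)=\frac{\Lambda_f^{(\mu)}(s)}{G(s)}
 =\sum_{j=0}^{\mu}\binom{\mu}{j}g_{\mu-j}(s)L_f^{(j)}(s).
\]
The $j=0$ term is $g_\mu(s)L_f(s)$. For $j\ge1$, $L_f^{(j)}(s)=\sum_{n\ge2}a_f(n)(-\log n)^jn^{-s}$. The first step is to bound these Dirichlet tails with Deligne's bound $|a_f(n)|\le\tau(n)n^{(k-1)/2}$. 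For $s\in I_{k,B}$ we have $s-(k-1)/2\ge k/2-1-B\log k$, so
\[
 |L_f(s)-1|+\sum_{j\ge1}^{\mu}|L_f^{(j)}(s)|
 \le\sum_{n\ge2}\tau(n)(1+\log n)^{\mu}n^{-(k/2-1-B\log k)}.
\]
Since $\mu\le\mathfrak b\log k$, the factor $(1+\log n)^{\mu}\le n^{C\log k/\log 2}$ is absorbed. The sum is therefore $O_{B,\mathfrak b}(2^{-k/2+O(\log k)})$, which is $O(2^{-ck})$.

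The second step is to compare the lower Gamma ratios $g_{\mu-j}(s)$ with $g_\mu(s)$. Lemma~\ref{lem:slow-bell-expansion}, applied with the integer $\mu-j$ (for $\mu-j\ge1$, and trivially $g_0=1$), gives $g_{\nu}(s)\asymp\lambda(s)^{\nu}$ uniformly for $0\le\nu\le\mu$. Since $\lambda(s)\asymp\log k>1$, we get $g_{\mu-j}(s)/g_\mu(s)\ll\lambda(s)^{-j}\le1$. The binomial factor satisfies $\binom{\mu}{j}\le2^{\mu}\le k^{\mathfrak b\log2}$, which is polynomial in $k$. Hence
\[
 \frac{\mathcal D_{f,\mu}(s)}{g_\mu(s)}
 =L_f(s)+O\Bigl(k^{C}\sum_{j=1}^{\mu}|L_f^{(j)}(s)|\Bigr)
 =1+O(k^{C}2^{-ck}).
\]
Shrinking $c$ absorbs the polynomial factor, which gives the claim with an explicit $c_{B,\mathfrak b}$.

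The main point needing care is uniformity. The positivity and the size $g_\nu\asymp\lambda^\nu$ must hold for all $\nu\le\mu$ simultaneously. This follows from Lemma~\ref{lem:slow-bell-expansion}, since its hypotheses are monotone in $\mu$ (the range $1\le\nu\le\mathfrak b\log k$ is the same). The Dirichlet-tail estimate must also tolerate the $B\log k$ shift of $s$ and the $\log$-power growth in $\mu$. Both losses are only $k^{O(1)}$ against a $2^{-k/2}$ gain, so I expect no real obstruction. Note that no split-Mellin argument is needed, because we stay well to the right of the center $k/2$, where the Euler product region makes the Dirichlet series absolutely convergent with huge margin.
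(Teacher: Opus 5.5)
Your proposal is correct and follows the paper's proof essentially step by step. It uses the same Leibniz expansion, the same Deligne-based bound on $L_f(s)-1$ and $L_f^{(j)}(s)$ with the logarithmic powers absorbed into a small power of $n$, and the same comparison $g_{\mu-j}(s)\ll\lambda(s)^{\mu-j}$, $g_\mu(s)\gg\lambda(s)^\mu$ from Lemma~\ref{lem:slow-bell-expansion}. The only cosmetic difference is that you bound the binomial factor crudely by $2^\mu\le k^{\mathfrak b\log 2}$ and let the exponential gain absorb this polynomial loss, whereas the paper sums $\sum_{j\ge1}\binom{\mu}{j}\lambda(s)^{-j}=(1+\lambda(s)^{-1})^\mu-1=O(1)$ using $\mu/\lambda(s)=O(1)$.
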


\begin{proof}
\smallskip
\noindent\emph{Derivatives of the Dirichlet series.}
Recall the normalized Fourier coefficients $\rho_f(n)$ from
Section~\ref{sec:right-edge-estimates}, defined by
$a_f(n)=\rho_f(n)n^{(k-1)/2}$.  Deligne's bound gives
$|\rho_f(n)|\le \tau(n)$, where $\tau$ is the divisor function.  Hence,
uniformly in $f$ and in the indicated range,
\begin{equation}
 L_f(s)-1=O_{B,\mathfrak b}(2^{-c_{B,\mathfrak b}k}),
 \qquad
 L_f^{(j)}(s)=O_{B,\mathfrak b}(2^{-c_{B,\mathfrak b}k})
 \quad(1\le j\le \mathfrak b\log k).
 \label{eq:slow-L-derivative-small}
\end{equation}
For $j=0$, set $T_0(s)=L_f(s)-1$, and for $j\ge1$, set
$T_j(s)=L_f^{(j)}(s)$.  Then
\[
 |T_j(s)|\le
 \sum_{n\ge2}\tau(n)(\log n)^j n^{(k-1)/2-s}.
\]
For $0\le j\le \mathfrak b\log k$ and $x=\log n\ge\log2$, put
\[
 F_j(x)=j\log x-\frac{k}{12}x.
\]
For all sufficiently large $k$,
\[
 F_j'(x)=\frac{j}{x}-\frac{k}{12}
 \le \frac{\mathfrak b\log k}{\log2}-\frac{k}{12}<0.
\]
Moreover, $F_j(\log2)<0$.  Thus $F_j(x)<0$ for all $x\ge\log2$, which gives
\begin{equation}
 (\log n)^j\le n^{k/12}\qquad(n\ge2).
 \label{eq:log-factor-absorbed}
\end{equation}
Since $s\ge k-1-B\log k$, the exponent
$(k-1)/2-s+k/12$ is at most $-k/3$ for large $k$.  Using
$\tau(n)\le n$ and enlarging $k$ once more,
\[
 |T_j(s)|\ll\sum_{n\ge2}n^{-k/4}=O(2^{-c k}),
\]
uniformly in $j$, $s$, and $f$.

\smallskip
\noindent\emph{Transfer to $\mathcal D_{f,\mu}$.}
By definition,
\[
 \mathcal D_{f,\mu}(s)=\frac{(G(s)L_f(s))^{(\mu)}}{G(s)}.
\]
Leibniz' rule therefore gives
\[
 \mathcal D_{f,\mu}(s)
 =\sum_{j=0}^{\mu}\binom{\mu}{j}g_{\mu-j}(s)L_f^{(j)}(s).
\]
The term $j=0$ equals
$g_\mu(s)(1+O_{B,\mathfrak b}(2^{-c_{B,\mathfrak b}k}))$.  For the remaining terms,
Lemma~\ref{lem:slow-bell-expansion}, together with $g_0=1$, gives
\[
 |g_{\mu-j}(s)|\le C_{B,\mathfrak b}\lambda(s)^{\mu-j},
 \qquad
 |g_\mu(s)|\ge C_{B,\mathfrak b}^{-1}\lambda(s)^\mu
\]
for all large $k$, where $\lambda(s)\asymp\log k$.  Hence
\begin{align*}
 \frac1{|g_\mu(s)|}
 \sum_{j=1}^{\mu}\binom{\mu}{j}|g_{\mu-j}(s)|\,|L_f^{(j)}(s)|
 &\ll_{B,\mathfrak b}2^{-c k}
   \sum_{j=1}^{\mu}\binom{\mu}{j}\lambda(s)^{-j} \\
 &\ll_{B,\mathfrak b}2^{-c k}
   \left((1+\lambda(s)^{-1})^\mu-1\right) \\
 &\ll_{B,\mathfrak b}2^{-c' k},
\end{align*}
because $\mu/\lambda(s)=O_{\mathfrak b}(1)$.  After decreasing the positive
constant $c_{B,\mathfrak b}$ in the statement if necessary, this proves
\eqref{eq:slow-L-factor-negligible}.
\end{proof}

\subsection{Proof of the first-order edge expansion}

Recall the displacement
\[
 \mathfrak d_{k,\mu}
 =\frac{\mu\psi'(k-1)}{\psi(k-1)-\log(2\pi)}
\]
from \eqref{eq:slow-edge-displacement}.

\begin{proof}[Proof of Proposition~\ref{prop:slow-first-order-edge}]
Choose $B=B(R)$ large and put $L=\lfloor B\log k\rfloor$.

\smallskip
\noindent\emph{Local right-edge ratios.}
On $I_{k,B}$, Lemma~\ref{lem:slow-L-factor-negligible} gives
\[
 \mathcal D_{f,\mu}(s)
 =g_\mu(s)(1+O_{B,\mathfrak b}(2^{-c_{B,\mathfrak b}k})).
\]
By Lemma~\ref{lem:slow-bell-expansion},
$g_\mu(s)=\lambda(s)^\mu(1+O_{B,\mathfrak b}(k^{-1}))>0$
uniformly for all sufficiently large $k$.  Since the relative error above is
$o(1)$, it follows that
\[
 \mathcal D_{f,\mu}(s)\ne0
 \qquad (s\in I_{k,B}),
\]
uniformly in $\mu$ and $f$.  In particular,
$c_{\mu,0}=\mathcal D_{f,\mu}(k-1)\ne0$.

By Lemma~\ref{lem:slow-bell-expansion}, specifically
\eqref{eq:slow-bell-log-derivative},
\[
 \frac{d}{ds}\log g_\mu(s)
 =\frac{\mu\psi'(s)}{\lambda(s)}+O_{B,\mathfrak b}(k^{-2}).
\]
Set
\[
 h_{k,\mu}(s)=\frac{\mu\psi'(s)}{\lambda(s)}.
\]
The standard bounds $\psi'(s)=O(k^{-1})$, $\psi''(s)=O(k^{-2})$, and
$\lambda(s)\asymp\log k$ give
\[
 h_{k,\mu}'(s)
 =\mu\frac{\psi''(s)\lambda(s)-\psi'(s)^2}{\lambda(s)^2}
 =O_{B,\mathfrak b}(k^{-2})
\]
uniformly for $1\le\mu\le\mathfrak b\log k$.  Since
$h_{k,\mu}(k-1)=\mathfrak d_{k,\mu}$, it follows that, for
$s=k-1-r$ with $0\le r\le L$,
\[
 \frac{\mu\psi'(s)}{\lambda(s)}
 =\mathfrak d_{k,\mu}+O_{B,\mathfrak b}(r/k^2).
\]
Integrating from $k-1$ to $k-1-r$ gives
\[
 \log\frac{g_\mu(k-1-r)}{g_\mu(k-1)}
 =-r\mathfrak d_{k,\mu}
 +O_{B,\mathfrak b}\left(\frac{r+r^2}{k^2}\right).
\]
Since
\[
 r\mathfrak d_{k,\mu}
 =O_{B,\mathfrak b}\left(\frac{\log k}{k}\right)=o(1),
\]
exponentiation and Lemma~\ref{lem:slow-L-factor-negligible} yield
\begin{equation}
 \frac{\mathcal D_{f,\mu}(k-1-r)}{\mathcal D_{f,\mu}(k-1)}
 =1-r\mathfrak d_{k,\mu}
 +O_{B,\mathfrak b}\left(
 r^2\mathfrak d_{k,\mu}^2+\frac{(1+r)^2}{k^2}+2^{-ck}
 \right)
 \label{eq:slow-D-ratio}
\end{equation}
for $0\le r\le L$.

\smallskip
\noindent\emph{Summation and factorial tails.}
For all sufficiently large $k$, one has $L<M$.  Hence the coefficient of
$w^r$ in $q_{f,\mu,k}$ is $c_{\mu,r}$ throughout the local range
$0\le r\le L$; the halved central coefficient does not occur there.  After
multiplication of \eqref{eq:slow-D-ratio} by $(2\pi w)^r/r!$ and summation over
$0\le r\le L$, the total contribution of the error term is
$o_{\mathfrak b,R}(\mathfrak d_{k,\mu})$.  Indeed, for $|w|\le R$, each
factorial moment
\[
 \sum_{r\ge0} r^j\frac{(2\pi R)^r}{r!}
\]
is bounded by a constant depending only on $j$ and $R$.  Hence the summed
error is
\[
 O_{\mathfrak b,R}(\mathfrak d_{k,\mu}^2+k^{-2}+e^{-c k})
 =o_{\mathfrak b,R}(\mathfrak d_{k,\mu}),
\]
uniformly for $1\le\mu\le\mathfrak b\log k$, because
$\mathfrak d_{k,\mu}\gg(k\log k)^{-1}$.

The same choice of $B$ makes the model factorial tails of $e^{2\pi w}$ and
$2\pi we^{2\pi w}$ over $r\ge L+1$ equal to
$o_{\mathfrak b,R}(\mathfrak d_{k,\mu})$.  Proposition~\ref{prop:transition-global-factorial-tail},
applied with $A=2$ and lower cutoff $L+1$, controls the corresponding tail of
the actual edge polynomial.  Summing the main terms gives
\[
 \sum_{r\ge0}\frac{(2\pi w)^r}{r!}(1-r\mathfrak d_{k,\mu})
 =e^{2\pi w}(1-2\pi\mathfrak d_{k,\mu}w),
\]
which proves \eqref{eq:slow-first-order-q}.  Substituting $w=iz$ and taking
the odd part gives \eqref{eq:slow-first-order-p}.
\end{proof}

\section{Effectivity and the remaining finite region}
\label{sec:effectivity}

In this paper, ``effective'' is used only in the following in-principle sense:
once the auxiliary compact sets, neighborhoods, and Rouch\'e margins have been
fixed and all constants in the analytic estimates have been tracked, the
thresholds in the asymptotic statements can be extracted.  We do not claim
that the present proof supplies practical numerical constants or a useful
numerical value of $K_0$.  Obtaining such a value would require tracking four
groups of constants:
\begin{enumerate}
\item the uniform Dirichlet and Mellin remainders, the Bell--Gamma
estimates, and the factorial tail;
\item the curvature thresholds in the moving-saddle approximation;
\item the Rouch\'e, annular-exclusion, and winding-count margins on the
separated compact sets and in the critical and endpoint neighborhoods;
\item the fixed-weight thresholds $m_0(k)$, including explicit data for the
finitely many normalized eigenforms in the remaining weights.
\end{enumerate}
We do not optimize these constants.  Accordingly,
Corollary~\ref{cor:finite-pair-exception} is qualitative: this paper does not
provide explicit numerical bounds for the finite region $\mathcal P$, optimized
or otherwise, and does not prove that $\mathcal P$ is empty.  No numerical
low-weight verification is used in any theorem.  A rigorous elimination of
the remaining finite region would require explicit constant extraction
followed, if necessary, by certified computation rather than floating-point
evidence alone.

\medskip
\begingroup
\footnotesize
\noindent\textbf{Use of AI-assisted tools.}
For transparency, during the preparation of this manuscript the author used
OpenAI ChatGPT to discuss possible gaps in arguments, review notation and
cross-references, improve the organization and exposition, and edit \LaTeX\
and English prose.  The tool was not used as a source of mathematical results
or as a formal proof verifier.  All proofs, computations, statements, and
references were independently checked by the author, who takes full
responsibility for the manuscript.
\par
\endgroup

\end{document}